\numberwithin{equation}{section}
\newcommand{\wt }{\widetilde}
\newcommand{\calK}{{\mathcal K}}
\newcommand{\bfk}{{\mathbf k}}
\newcommand{\bfA}{{\mathbf A}}
\newcommand{\bfR}{{\mathbf R}}
\newcommand{\bfGamma}{{\mathbf \Gamma}}
\newcommand{\bfSigma}{{\mathbf \Sigma}}
\newcommand{\bfI}{{\mathbf I}}
\newcommand{\bfx}{{\mathbf x}}
\newcommand{\beao}{\begin{eqnarray*}}
\newcommand{\eeao}{\end{eqnarray*}}
\newcommand{\beam}{\begin{eqnarray}}
\newcommand{\eeam}{\end{eqnarray}}
\newcommand{\barr}{\begin{array}}
\newcommand{\earr}{\end{array}}
\definecolor{darkblue}{rgb}{.1, 0.1,.8}
\definecolor{darkgreen}{rgb}{0,0.8,0.2}
\definecolor{darkred}{rgb}{.8, .1,.1}
\newcommand{\bco}{\begin{corrolary}}
\newcommand{\eco}{\end{corrolary}}
\newcommand{\E}{\mathbb{E}}
\renewcommand{\P}{\mathbb{P}}
\newcommand{\1}{\mathds{1}}
\newcommand{\R}{\mathbb{R}}
\newcommand{\N}{\mathbb{N}}
\newcommand{\bfS}{{\mathbf S}}
\newcommand{\bfB}{{\mathbf B}}
\newcommand{\Var}{\operatorname{Var}}
\DeclareMathOperator{\e}{e}
\newcommand{\bfz}{{\mathbf z}}
\newcommand{\X}{{\mathbf X}}
\newcommand{\Y}{{\mathbf Y}}
\newcommand{\dint}{\,\mathrm{d}}
\newcommand{\vep}{\varepsilon}
\newcommand{\nto}{n \to \infty}
\newcommand{\lhs}{left-hand side}
\newcommand{\rhs}{right-hand side}
\newcommand{\tr}{\operatorname{tr}}
\newcommand{\diag}{\operatorname{diag}}
\newcommand{\MP}{Mar\v cenko--Pastur }
\def\tag{\refstepcounter{equation}\leqno }
\newtheorem{lemma}{Lemma}[section]
\newtheorem{theorem}[lemma]{Theorem}
\newtheorem{proposition}[lemma]{Proposition}
\newtheorem{corollary}[lemma]{Corollary}
\newtheorem{remark}[lemma]{Remark}
\newcommand{\cid}{\stackrel{d}{\rightarrow}}
\newcommand{\cip}{\stackrel{\P}{\rightarrow}}
\newcommand{\eid}{\stackrel{d}{=}}
\begin{document}
\bibliographystyle{acm}
\title[Log determinant of heavy-tailed correlation matrix]
{Log determinant of large correlation matrices under infinite fourth moment}
\thanks{Johannes Heiny's research is supported by the Deutsche Forschungsgemeinschaft (DFG) via RTG 2131 High-dimensional Phenomena in Probability -- Fluctuations and Discontinuity.}
\author[J. Heiny]{Johannes Heiny}
\address{Fakult\"at f\"ur Mathematik,
Ruhruniversit\"at Bochum,
Universit\"atsstrasse 150,
D-44801 Bochum,
Germany}
\email{johannes.heiny@rub.de}
\author[N. Parolya]{Nestor Parolya}
\address{Delft Institute of Applied Mathematics,
Delft Universtity of Technology,
Mekelweg 4,
CD 2628 Delft,
The Netherlands}
\email{n.parolya@tudelft.nl}
\begin{abstract}
In this paper, we show the central limit theorem for the logarithmic determinant of the sample correlation matrix $\bfR$ constructed from the $(p\times n)$-dimensional data matrix $\X$ containing independent and identically distributed random entries with mean zero, variance one and infinite fourth moments. Precisely, we show that for $p/n\to \gamma\in (0,1)$ as $n,p\to \infty$ the \emph{logarithmic law}
\begin{equation*}
\frac{\log \det \bfR -(p-n+\frac{1}{2})\log(1-p/n)+p -p/n}{\sqrt{-2\log(1-p/n)- 2 p/n}} \cid N(0,1)\,
\end{equation*}
 is still valid if the entries of the data matrix $\X$ follow a symmetric distribution with a regularly varying tail of index $\alpha\in (3,4)$. The latter assumptions seem to be crucial, which is justified by the simulations: if the entries of $\X$ have the infinite absolute third moment and/or their distribution is not symmetric,  the logarithmic law is not valid anymore. The derived results highlight that the logarithmic determinant of the sample correlation matrix is a very stable and flexible statistic for heavy-tailed big data and open a novel way of analysis of high-dimensional random matrices with self-normalized entries.
\end{abstract}
\keywords{sample correlation matrix, logarithmic determinant, random matrix theory, heavy tails, infinite fourth moment}
\subjclass{Primary 60B20; Secondary 60F05 60G10 60G57 60G70}

\maketitle

\section{Introduction}\setcounter{equation}{0}

The analysis of the logarithmic determinant has always been of considerable interest in the large dimensional random matrix theory. The investigations of the moments of random determinants trace back to the 1950s (see, Dembo \cite{dembo1989} and references therein). The central limit theorems (CLTs) for the logarithmic determinant of random Gaussian matrices, Wigner matrices and matrices with real independent and identically distributed (i.i.d.) entries with sub-exponential tails were proved by Goodman \cite{goodman1963}, Tao and Vu \cite{tao2012} and Nguyen and
Vu \cite{nguyen2014}, respectively. Girko \cite{girko1979} was the first to state that the result of Goodman  \cite{goodman1963} holds for general random matrices under the additional assumption that the fourth moment of the entries is equal to three (normal-like moments of order four). This CLT was named as Girko's logarithmic law or simply \emph{logarithmic law}.  Moreover, twenty years later Girko \cite{girko1998refinement} using an elegant method of perpendiculars partially proved that the CLT for the logarithmic determinant holds in a very generic case under the existence of the $4+\varepsilon$ moments for some small $\varepsilon>0$.  Nguyen and Vu \cite{nguyen2014} show a refined and more transparent proof of this claim assuming a much stronger condition of sub-exponential tails for the random matrix entries and providing additionally the rate of convergence of the logarithmic determinant of the sample covariance matrix. In case the stochastic representation of the logarithmic determinant is available, the large/moderate deviation results are proved in \cite{grote:kabluchko:thaele:2019}, whereas fast Berry--Esseen bounds were recently provided by \cite{heiny:johnston:prochno:2022}.

Consider a random sample $\bfx_1\ldots,\bfx_n$ from a $p$-dimensional distribution collected into a $p\times n$ random data matrix $\X$. For statistical applications the logarithmic determinants of the sample covariance matrix $\bfS=n^{-1} \X\X^{\top}$ and the sample correlation matrix $\bfR=\{\diag(\bfS)\}^{-1/2}\, \bfS\{\diag(\bfS)\}^{-1/2}$ are of vital importance. They allow efficient inferential procedures on the structure of the true covariance/correlation matrices (see, the monographs of Anderson \cite{anderson2003} and Yao, Zheng and Bai \cite{yaobaizheng2015}).
 In particular, the determinant of the sample correlation matrix has numerous applications in stochastic geometry as it is proportional to the volume of the hyperellipsoid constructed from standardized vectors, see \cite{nielsen1999}. Furthermore, the determinant of $\bfR$ is the well-known likelihood ratio statistic for testing the independence of the elements of the random vector in case of multivariate normality of the columns of the data matrix, see, e.g., \cite{jiang2013, bodnar2019testing} and references therein. 

A wide variety of results have been obtained for the large dimensional sample covariance matrix $\bfS$, e.g., \MP~law/equation in \cite{marchenko:pastur:1967, silversteinchoi1995}, CLT for linear spectral statistics in \cite{baisil2004} and Tracy-Widom law in \cite{elkaroui2007}, to mention a few. For the sample correlation matrix $\bfR$, the situation gets more complicated because of the specific nonlinear dependence structure caused by the normalization $\{\diag(\bfS)\}^{-1/2}$,  which makes the analysis of this random matrix quite challenging. In case the elements of the data matrix $\X$ are i.i.d. with zero mean, variance equal to one and finite fourth moment it is shown by Jiang \cite{jiang2004b} (see, also \cite{bai:zhou:2008},\cite{elkaroui:2009} and \cite{heiny2018}) that the \MP~law is still valid for the sample correlation matrix $\bfR$. The asymptotic distribution of the largest eigenvalue of $\bfR$ is proved by \cite{bao2012} to obey the Tracy-Widom law. Moreover, the largest and smallest eigenvalues of $\bfR$ converge to the edges of the \MP~density almost surely \cite{heiny2018}. Thus, the ``first order'' properties (almost sure convergence) of the eigenvalues of the sample covariance matrix $\bfS$ and sample correlation matrix $\bfR$ coincide in case the entries of the data matrix $\X$ possess at least finite second moments (see \cite{heiny:yao:2021}). This observation changes if ``second order'' properties (such as CLTs) are of interest. To illustrate this fact, we compare the CLTs for the logarithmic determinants of $\bfS$ and $\bfR$ under finite fourth moment assumption.

  The logarithmic law of the large sample covariance matrices can be deduced from the work of Bai and Silverstein \cite{baisil2004} for the linear spectral statistics $\tr(f(\bfS))$ with a test function $f(x)=\log(x)$ in case $p$ the number of columns of the data matrix is smaller than $n$ the number of its rows and both tend to infinity such that their ratio tends to a constant, i.e., $p/n\to\gamma\in (0,1)$, as $\nto$. More precisely, Wang and Yao \cite{wang2013} show that if the i.i.d. entries of the data matrix $\X=(X_{ij})_{1\le    i\le p; 1\le j  \le n}$ satisfy $\E(X_{11})=0$, $\Var(X_{11})=1$ and $\E(X^4_{11})<\infty$, the following logarithmic law for its corresponding sample covariance matrix $\bfS$ is valid
\begin{eqnarray}\label{loglaw:cov}
  \frac{\log\det\bfS - (p-n+1/2)\log(1-p/n) +p-\frac{1}{2}\left[\E(X^4_{11})-3\right]p/n}{\sqrt{-2\log(1-p/n)+\left[\E(X^4_{11})-3\right]p/n}} \cid N(0,1)\,, \quad \text{as $n\to\infty$}\,.
\end{eqnarray}
Later on,  Bao, Pan and Wang \cite{bao:pan:zhou:2015} and Wang, Han and Pan \cite{WangHanPan2018} proved a similar CLT for the logarithmic determinant of the sample covariance matrices in case $p/n\to1$ and $p\leq n$ under finite fourth moments.

For the sample correlation matrix $\bfR$ the situation is more involved. The first generic result for the linear spectral statistics of $\tr(f(\bfR))$ for some test function $f(\cdot)$ was proved in \cite{gao2017} under existence of the fourth moment and it states that taking $f(x)=\log(x)$ for $p/n\to\gamma<1$ one gets
\begin{equation}\label{loglaw:corr}
\frac{\log \det \bfR -(p-n+\frac{1}{2})\log(1-p/n)+p -p/n}{\sqrt{-2\log(1-p/n)- 2 p/n}} \cid N(0,1)\,, \quad\text{as $n\to\infty$}\,.
\end{equation}
Surprisingly, the latter logarithmic law is quite different from \eqref{loglaw:cov}, especially the dependence on the fourth moment is not present in \eqref{loglaw:corr}, which indicates that the fourth moment assumption can be eventually weakened (see also \cite{parolya:heiny:kurowicka:2021} and \cite{yang2021}).

In this paper, we contribute to the existing literature by showing that the logarithmic law \eqref{loglaw:corr} is valid for the sample correlation matrix even if the fourth moment of the entries of the data matrix $\X$ is infinite. To the best of our knowledge, this is the first result of this kind. We assume that the i.i.d. elements $X_{ij}$ of $\X$ possess regularly varying tails with index $\alpha\in(3, 4)$ and $X_{ij}\eid -X_{ij}$ (symmetry). In particular, this implies that $\E X_{11}^4 =\infty$ and $\E|X_{11}|^3 <\infty$. Our proof relies on Girko's method of perpendiculars and a CLT for martingale differences together with the exact computation and asymptotics of the moments of the products of self-normalized variables. 

The paper has the following structure: Section~\ref{sec:main} contains notations, assumptions and the main result. In Section~\ref{sec:3}, more precisely in Theorem~\ref{thm:fourthmoment}, we derive an exact formula for the fourth moment of a weighted sum of the components a random vector on the unit sphere. The latter result is of independent interest and can be considered as a first step to generalization of the key lemma for quadratic forms for correlated random vectors of unit length in case of an infinite fourth moment (c.f.~\cite[Lemma 5]{gao2017} and \cite[Lemma 1]{morales:johnstone:mckay:2021}). Asymptotic formulas for the moments of self-normalized variables and the proof of the main theorem are presented in Section~\ref{sec:proofmain}, while the appendix contains some additional auxiliary results.

\section{Main result}\label{sec:main}\setcounter{equation}{0}

Consider a $p$-dimensional population $\bfx=(X_1,\ldots,X_p)\in\R^p$
  where the coordinates  $X_i$  are i.i.d.~ non-degenerated
  random variables with mean zero.
  For a sample $\bfx_1,\ldots,\bfx_n$ from the population we construct the 
  data matrix $\X=\X_n=(\bfx_1,\ldots,\bfx_n)=(X_{ij})_{1\le    i\le p; 1\le j  \le n}$, the sample covariance matrix $\bfS=\bfS_n =n^{-1} \X\X^{\top}$
  and the sample correlation matrix $\bfR$,
  \begin{align}
  \bfR &=\bfR_n =\{\diag(\bfS_n)\}^{-1/2}\, \bfS_n\{\diag(\bfS_n)\}^{-1/2}= \Y \Y^{\top}\,. \label{eq:defRY}
  \end{align}
  Here the standardized  matrix $\Y=\Y_n=(Y_{ij})_{1\le    i\le p; 1\le j  \le n}$ for the sample correlation matrix has
  entries 
  \beam\label{def:R}
  Y_{ij}=Y_{ij}^{(n)}=\frac{X_{ij}}{\sqrt{X_{i1}^2+\cdots+X_{in}^2}}\,,
  \eeam
  which depend on $n$. Throughout the paper, we often suppress the dependence on $n$ in our notation.
	We  consider the
  asymptotic regime
  \begin{equation}\label{Cgamma}
    p=p_n \to \infty \quad \text{ and } \quad \frac{p}{n}\to \gamma\in (0,1)\,,\quad \text{ as } \nto\,. \tag{$C_\gamma$}
  \end{equation}
We assume that $|X_{11}|$ has a regularly varying
tail with index $\alpha>0$, that is 
\beam\label{eq:regvar}
\P(|X_{11}|>x)=  L(x)\, x^{-\alpha}\,,\qquad x>0\,,
\eeam
for a function $L$ that is slowly varying at infinity. Thus, regularly varying distributions possess power-law tails and moments of $|X_{11}|$ of higher order than $\alpha$ are infinite. Typical examples include the Pareto distribution with parameter $\alpha$ and the $t$-distribution with $\alpha$ degrees of freedom.

Now we state the CLT for the logarithmic determinant of the sample correlation matrix $\bfR$ under infinite fourth moment which is the main result of this paper.



\begin{theorem}\label{thm:main}
  Assume \eqref{Cgamma} and that the distribution of
  $X_{11}$ is symmetric and regularly varying with
  index $\alpha \in (3,4)$. Then, as $\nto$, we have 
\begin{equation}\label{eq:main}
\frac{\log \det \bfR -(p-n+\frac{1}{2})\log(1-\frac{p}{n})+p -\frac{p}{n}}{\sqrt{-2\log(1-p/n)- 2 p/n}} \cid N(0,1)\,.
\end{equation}
\end{theorem}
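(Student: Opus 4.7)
My plan is to apply Girko's method of perpendiculars in the self-normalized setting. Since every row $\mathbf{y}_k \in \R^n$ of $\Y$ has unit Euclidean norm by construction, Gram--Schmidt orthogonalization of the rows gives the exact identity
\begin{equation*}
\log \det \bfR \;=\; \sum_{k=1}^{p}\log(1-\vep_k), \qquad \vep_k \;:=\; \mathbf{y}_k^{\top} P_{k-1}\mathbf{y}_k,
\end{equation*}
where $P_{k-1}$ denotes the orthogonal projection in $\R^n$ onto $\mathrm{span}(\mathbf{y}_1,\ldots,\mathbf{y}_{k-1})$ and $P_0=0$. Since the rows of $\X$ are independent, each $\vep_k$ is measurable with respect to the $\sigma$-field $\mathcal{F}_k$ generated by the first $k$ rows of $\X$; conditionally on $\mathcal{F}_{k-1}$ the projection $P_{k-1}$ is frozen while $\mathbf{y}_k$ is the self-normalization of an i.i.d.\ vector with symmetric, regularly varying entries. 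I would then decompose
\begin{equation*}
\log \det \bfR \;=\; \sum_{k=1}^{p} D_k \;+\; \sum_{k=1}^{p}\E[\log(1-\vep_k)\mid \mathcal{F}_{k-1}], \qquad D_k \;:=\; \log(1-\vep_k)-\E[\log(1-\vep_k)\mid \mathcal{F}_{k-1}],
\end{equation*}
so that $\{D_k\}_{k=1}^p$ is a martingale difference sequence while the second sum supplies the deterministic centering.

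\textbf{Centering via moment asymptotics.} Expanding $\log(1-\vep_k)=-\vep_k-\tfrac12\vep_k^{2}-\cdots$ and using the moment formulas promised in Section~\ref{sec:3} (in particular the exact fourth-moment identity of Theorem~\ref{thm:fourthmoment}), one should obtain
\begin{equation*}
\E[\vep_k\mid \mathcal{F}_{k-1}] \;=\; \tfrac{k-1}{n} + o(n^{-1/2}), \qquad \E[\vep_k^{2}\mid \mathcal{F}_{k-1}] \;=\; \tfrac{2(k-1)}{n^{2}} + \bigl(\tfrac{k-1}{n}\bigr)^{\!2} + o(n^{-1}),
\end{equation*}
uniformly in $k\le p$, the symmetry of $X_{11}$ being used to annihilate odd cross-moments and the hypothesis $\alpha\in(3,4)$ guaranteeing $\E|X_{11}|^{3}<\infty$ (which bounds the relevant error terms despite $\E X_{11}^{4}=\infty$, thanks to self-normalization). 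Summing over $k$ and performing a Stirling-type expansion of $\sum_k\log(1-(k-1)/n)$ together with the $\vep_k^{2}/2$ correction should reproduce the target centering $(p-n+\tfrac12)\log(1-p/n)-p+p/n$. The remainders coming from the tail of the Taylor series and from the moment errors must be pushed below $o_P(1)$, since $-2\log(1-p/n)-2p/n\to -2\log(1-\gamma)-2\gamma$ is of constant order; this makes the control demanding.

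\textbf{Martingale CLT and main obstacle.} For the fluctuation part $\sum_k D_k$ I would invoke the Hall--Heyde martingale CLT, verifying (i) the conditional variance convergence $\sum_{k=1}^{p}\E[D_k^{2}\mid \mathcal{F}_{k-1}] \stp -2\log(1-\gamma)-2\gamma$ and (ii) a conditional Lyapunov/Lindeberg condition. Writing $D_k = -(\vep_k-\E[\vep_k\mid\mathcal{F}_{k-1}]) + O(\vep_k^{2})$ to leading order, the first condition reduces to $\sum_k \Var(\vep_k\mid\mathcal{F}_{k-1})$; Theorem~\ref{thm:fourthmoment} applied to the rank-$(k-1)$ projection $P_{k-1}$ should deliver the leading contribution $\sum_k 2(k-1)/n^{2}\to -2\log(1-\gamma)-2\gamma$. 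In my view the principal obstacle is the sharp control of the conditional variance and of the absolute third moment of the self-normalized quadratic form $\vep_k$ in the regime $\alpha\in(3,4)$: in the finite-fourth-moment case one would invoke Hanson--Wright type bounds for $\mathbf{y}^{\top}P\mathbf{y}$ with $\mathbf{y}$ close to uniform on the sphere, but here the components $Y_{kj}$ are heavy-tailed and intricately coupled through the common normalizer $\sqrt{X_{k1}^{2}+\cdots+X_{kn}^{2}}$. The exact fourth-moment formula of Theorem~\ref{thm:fourthmoment} is designed precisely to bypass this difficulty, while the symmetry hypothesis is indispensable: without it, odd-order moments of $X_{11}$ survive and would contribute an additional $\alpha$-dependent term to the variance, invalidating the clean logarithmic law \eqref{eq:main}.
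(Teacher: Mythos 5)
Your overall architecture --- Girko's method of perpendiculars applied to the unit-norm rows of $\Y$, a martingale-difference decomposition with respect to the row filtration, the Hall--Heyde CLT, and the moment asymptotics for self-normalized variables from Section~\ref{sec:3} --- is the strategy of the paper, and your starting identity $\log\det\bfR=\sum_k\log(1-\vep_k)$ is correct. The gap lies in how you expand. The quantity $\vep_k=\mathbf{y}_k^{\top}P_{k-1}\mathbf{y}_k$ is \emph{not} small: it concentrates around $\E[\vep_k\mid\mathcal F_{k-1}]=(k-1)/n$, which is of order $\gamma$ for $k$ comparable to $p$. Hence the Taylor expansion $\log(1-\vep_k)=-\vep_k-\tfrac12\vep_k^2-\cdots$ truncated at second order leaves a remainder of size $((k-1)/n)^3$ whose sum over $k$ is of order $n$; it cannot be ``pushed below $o_P(1)$'', and your linearization $D_k=-(\vep_k-\E[\vep_k\mid\mathcal F_{k-1}])+O(\vep_k^2)$ is not a leading-order statement because the $O(\vep_k^2)$ term is of order one. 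The paper avoids this by factoring out the conditional mean first: writing $1-\vep_{i+1}=\tfrac{n-i}{n}(1+\wt Z_{i+1})$ gives $\log\det\bfR=c_n+\sum_i\log(1+\wt Z_{i+1})$ with $c_n$ deterministic and $\max_i|\wt Z_{i+1}|\cip0$ (Lemma~\ref{lem:4.2}), so a genuine three-term expansion with Lagrange remainder applies. In your notation this means expanding around $(k-1)/n$ rather than around $0$; the derivative $-(1-(k-1)/n)^{-1}$ produced by that expansion is exactly the factor you are missing.

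The same omission corrupts your variance bookkeeping. The correct leading-order conditional variance is
\begin{equation*}
\Var(\vep_k\mid\mathcal F_{k-1})=2\beta_{2,2}\Big(\tr(P_{k-1}^2)-\sum_{j}(P_{k-1})_{jj}^2\Big)+\cdots\sim\frac{2(k-1)}{n^2}\Big(1-\frac{k-1}{n}\Big)\,,
\end{equation*}
not $2(k-1)/n^2$: the discrepancy $2(k-1)^2/n^3$ is of order $n^{-1}$, so it is not absorbed by your $o(n^{-1})$ error and accumulates to the order-one quantity $2\gamma^3/3$. Moreover your claimed limit $\sum_k 2(k-1)/n^2\to-2\log(1-\gamma)-2\gamma$ is false; that sum converges to $\gamma^2$. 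The correct computation is $\sum_k\Var(\vep_k\mid\mathcal F_{k-1})\,(1-(k-1)/n)^{-2}\sim2\sum_i(\tfrac1{n-i}-\tfrac1n)\to-2\log(1-\gamma)-2\gamma$, where the factor $(1-(k-1)/n)^{-2}$ is the square of the missing derivative; as written, your two errors do not cancel and would yield the wrong limiting variance. Finally, the genuinely hard part of the paper in the regime $\alpha\in(3,4)$ --- the fourth-moment bounds $\sum_i\E[U_{i+1}^4]\to0$ and $\sum_i\E[V_{i+1}^4]\to0$ of Proposition~\ref{prop:important}, and the concentration of $\sum_\ell q_{i,\ell\ell}^2$ in Lemma~\ref{lem:yaskov}, which needs a truncation argument exploiting $\E|X_{11}|^3<\infty$ --- is only gestured at; Theorem~\ref{thm:fourthmoment} by itself does not deliver these without the asymptotics of Lemma~\ref{lem:moment24} and Proposition~\ref{prop:offdiagbound}.
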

Theorem \ref{thm:main} is proved in Section \ref{sec:proofmain}. 
\begin{figure}[h]
 \centering
    \includegraphics[scale=0.4]{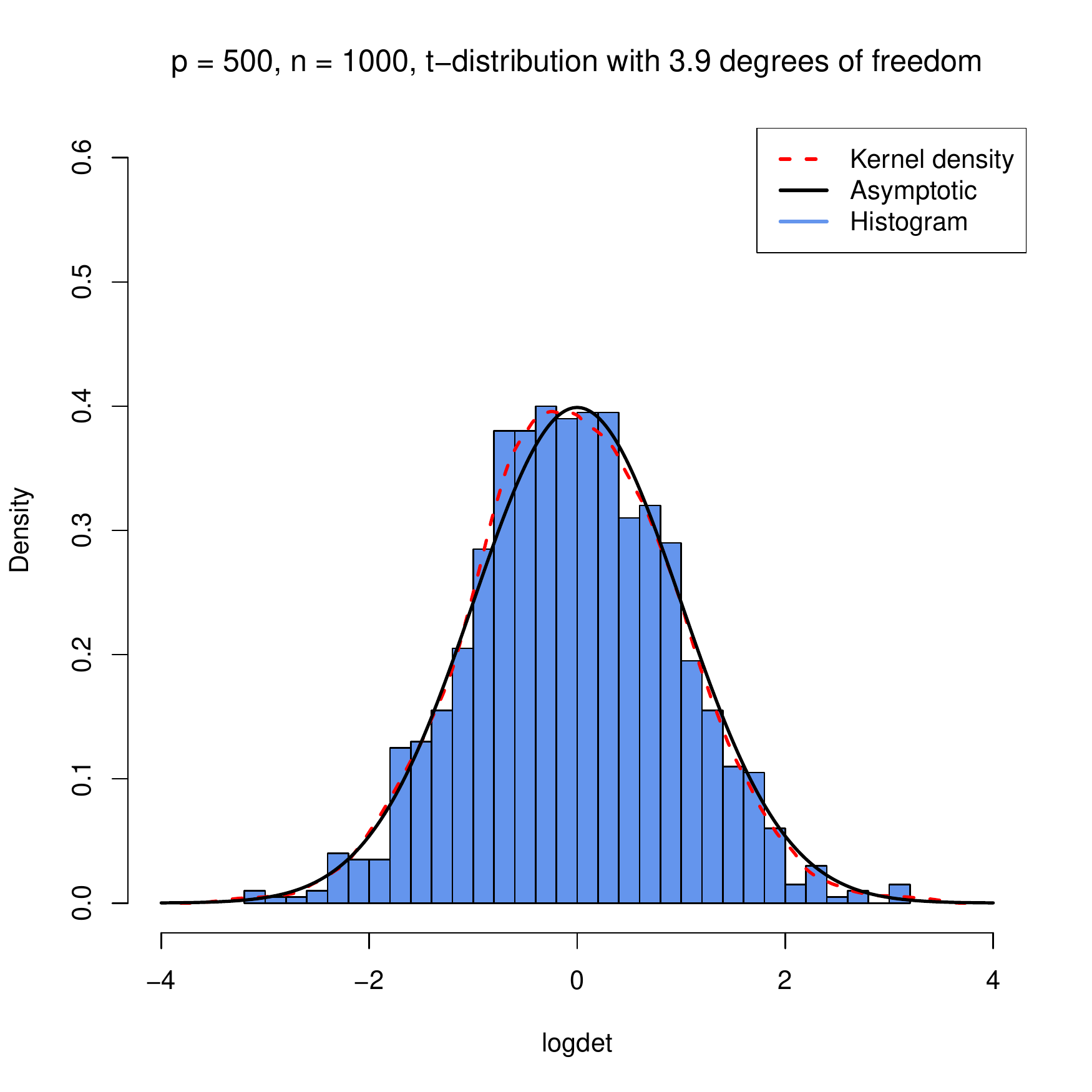} ~~
  \includegraphics[scale=0.4]{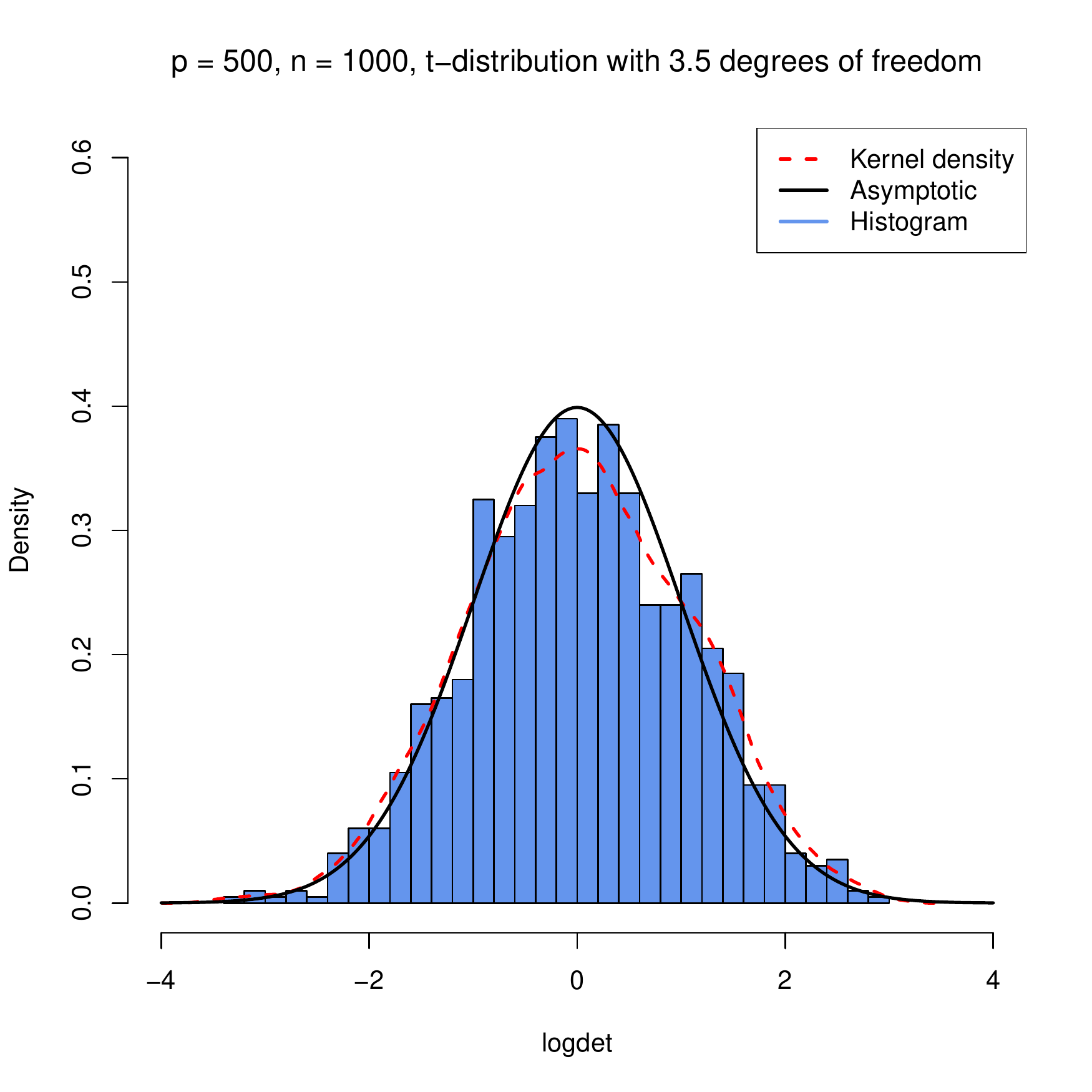}\\
  \includegraphics[scale=0.4]{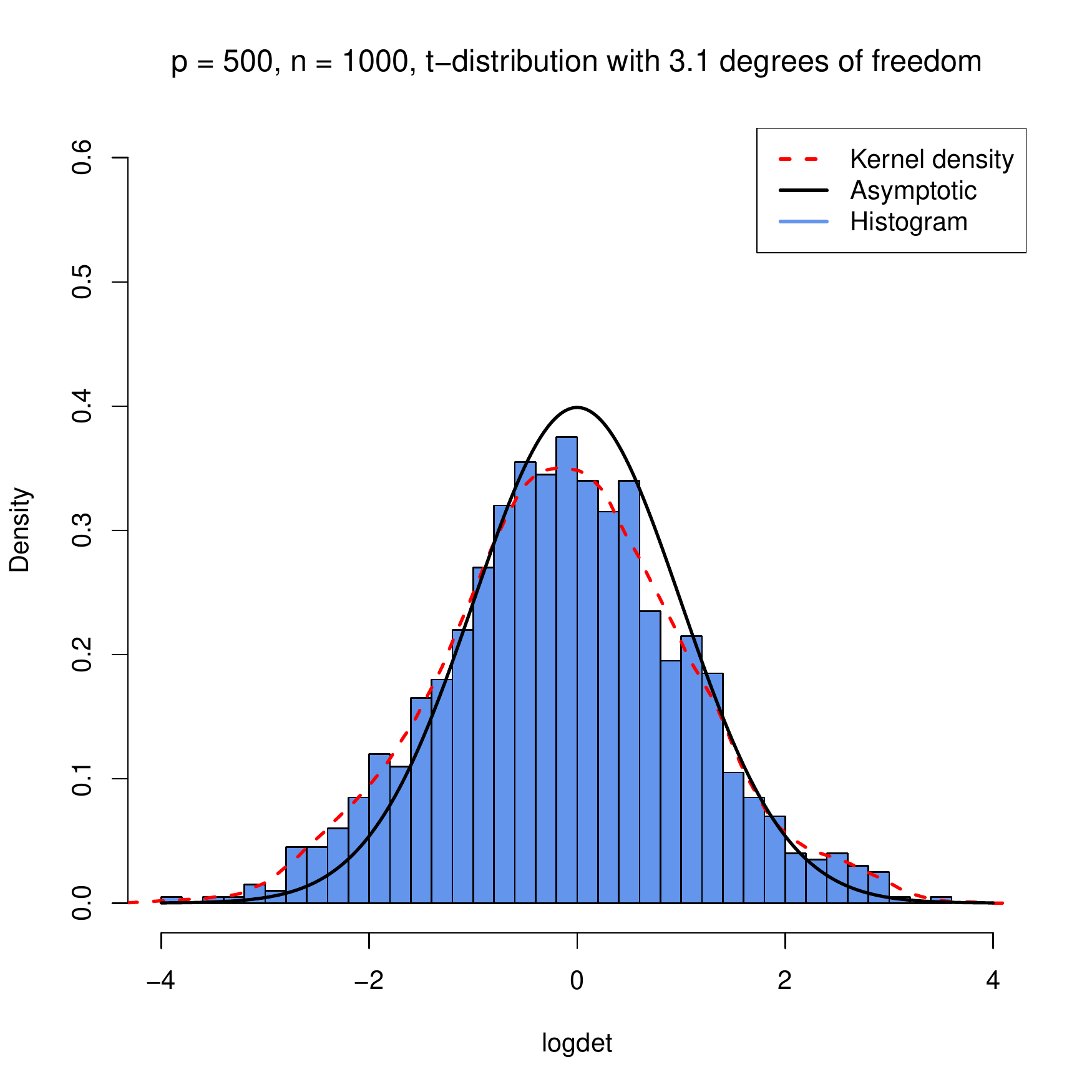}~~
  \includegraphics[scale=0.4]{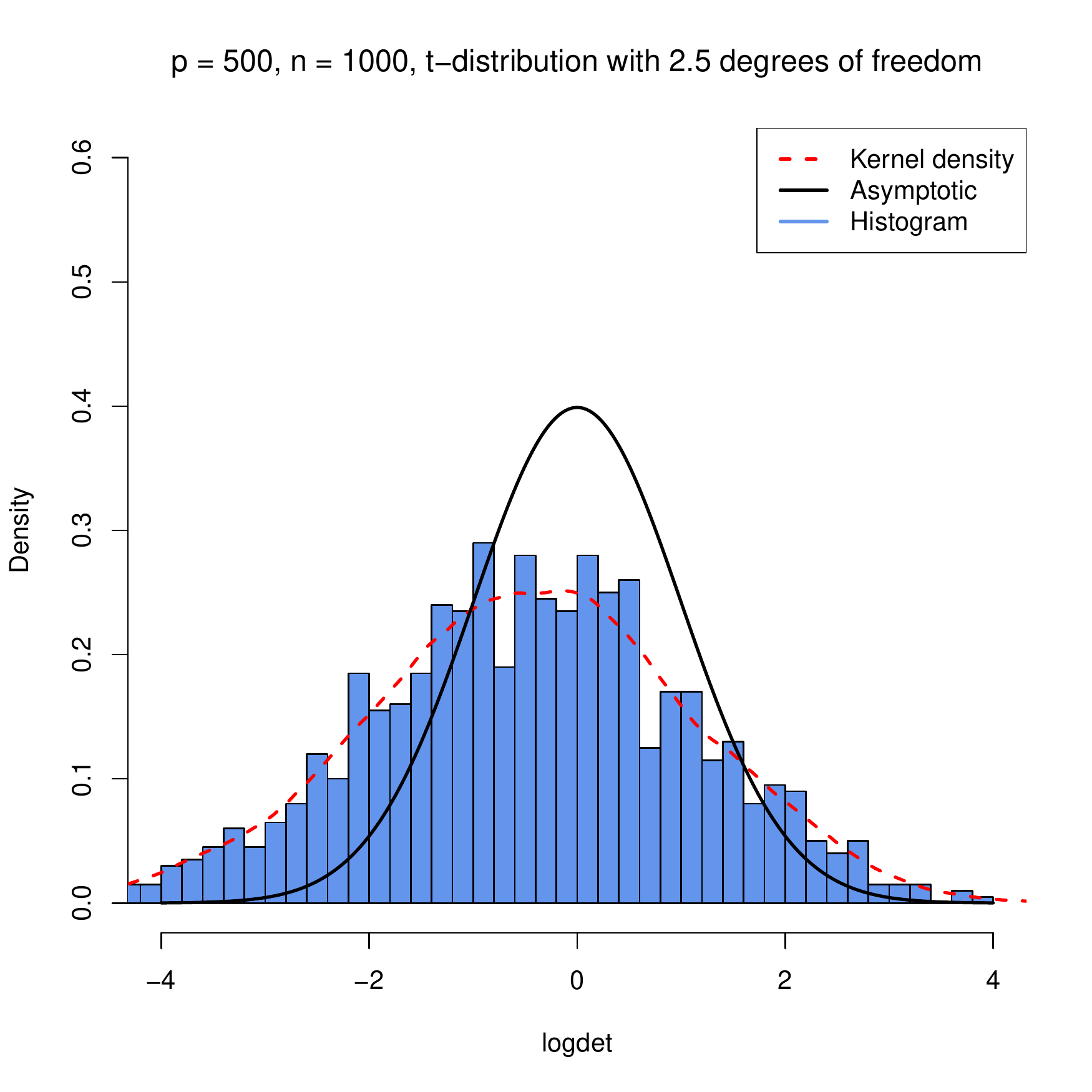}
  \caption{Logarithmic law for $t$ distribution with different degrees of freedom and $p=500$, $n=1000$ with 1000 repetitions.}
  \label{fig1}
\end{figure}
To numerically illustrate the role of the tail index parameter $\alpha$ and the effect of symmetry of $X_{11}$, we provide a small simulation in Figure \ref{fig1} and Figure \ref{fig2}.  First, we simulate the entries of the data matrix $X_{ij}$ independently from a $t$-distribution with different degrees of freedom smaller than four (infinite fourth moment). We observe a perfect fit of  both the histogram and kernel density to the density of the standard normal distribution for all degrees of freedom except $2.5$. Thus, the logarithmic law seems not to be valid in case the third absolute moment of the $t$-distribution is infinite, which is inline with our assumption $\alpha>3$. In the latter case the kernel density still resembles the normal density but has a significantly larger variance, which indicates that the case $\alpha\in(2,3)$ should be investigated separately in the future. The effect of a larger variance becomes more pronounced if we decrease the tail parameter of the observations $X_{ij}$ even further. 

\begin{figure}[h]
 \centering
    \includegraphics[scale=0.4]{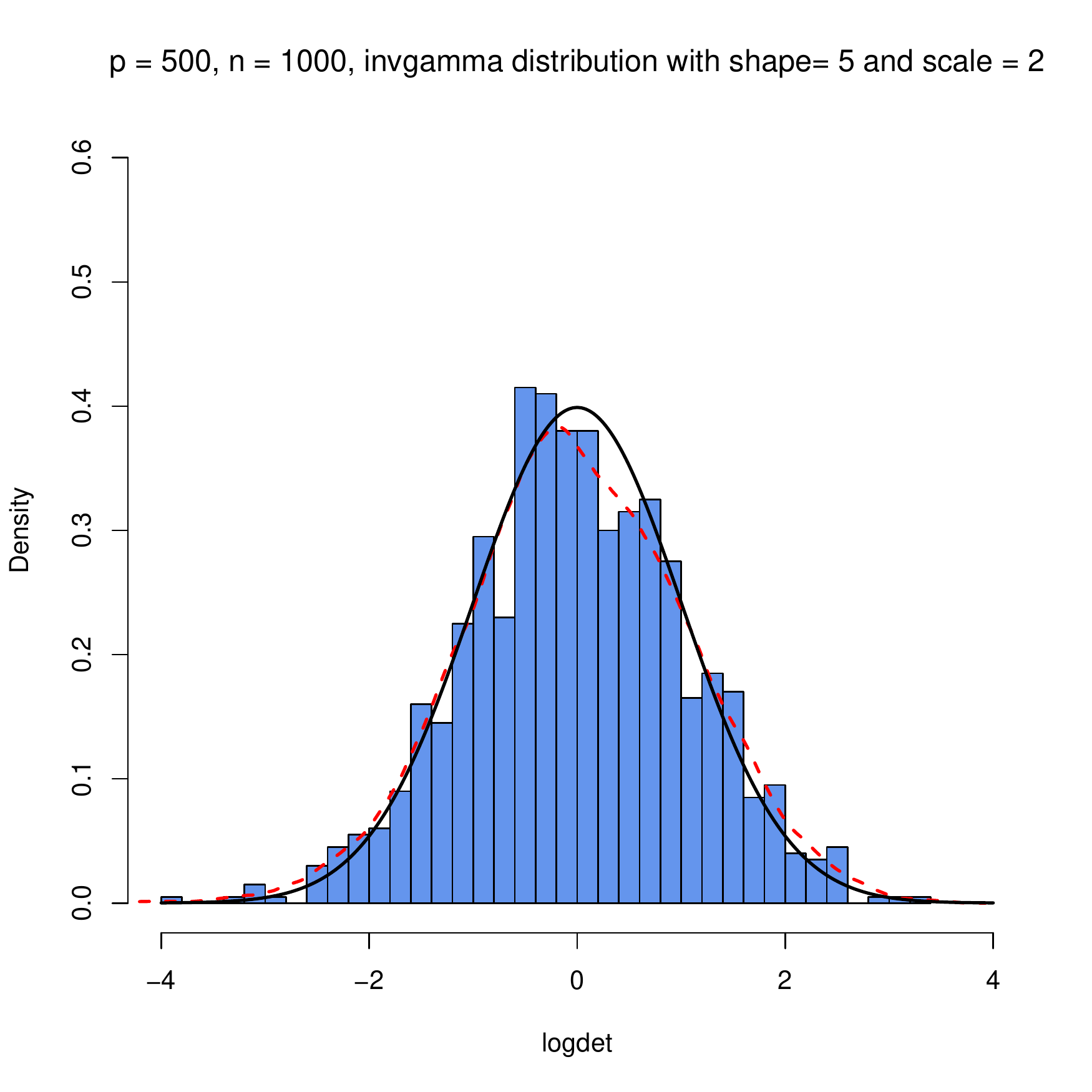} ~~
  \includegraphics[scale=0.4]{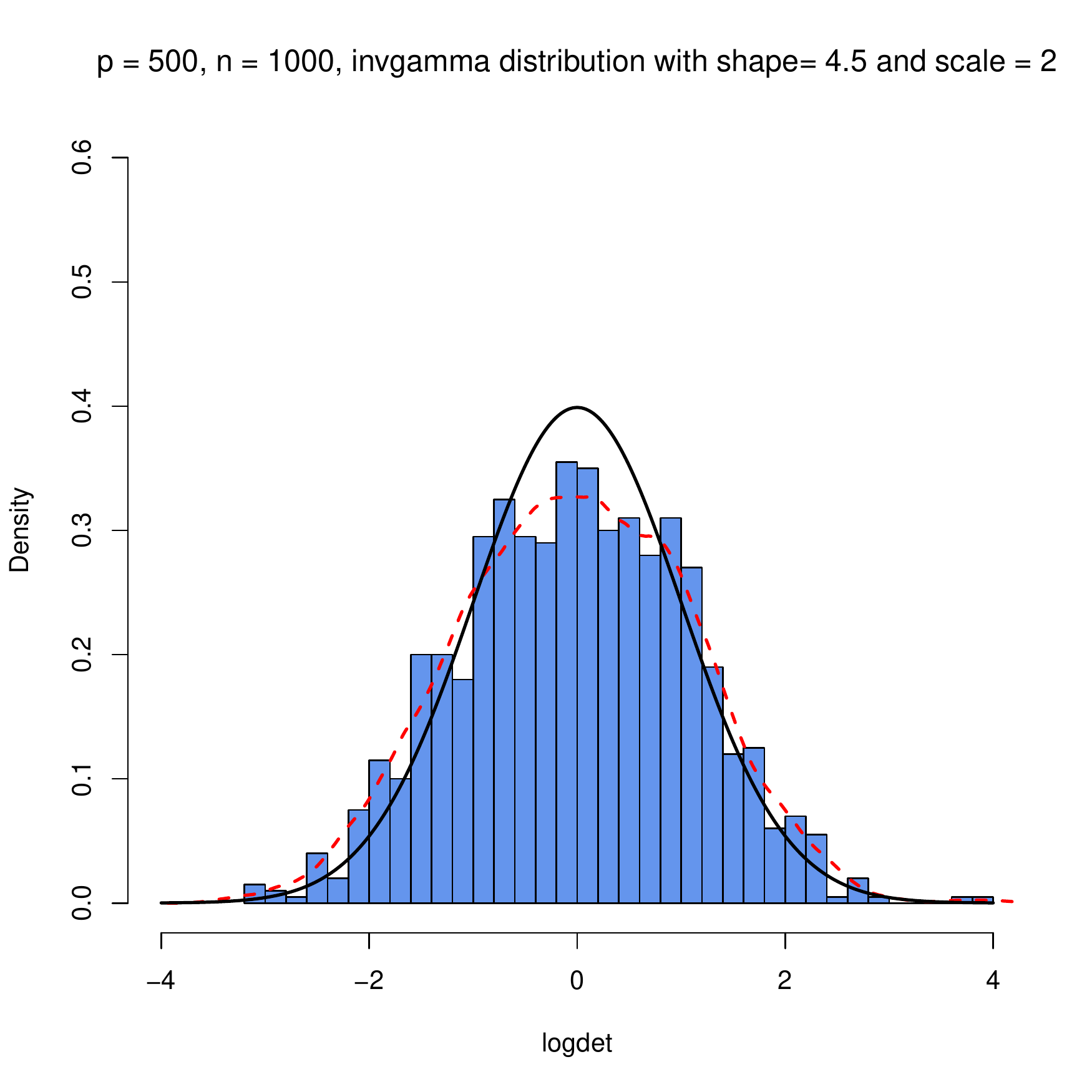}\\
  \includegraphics[scale=0.4]{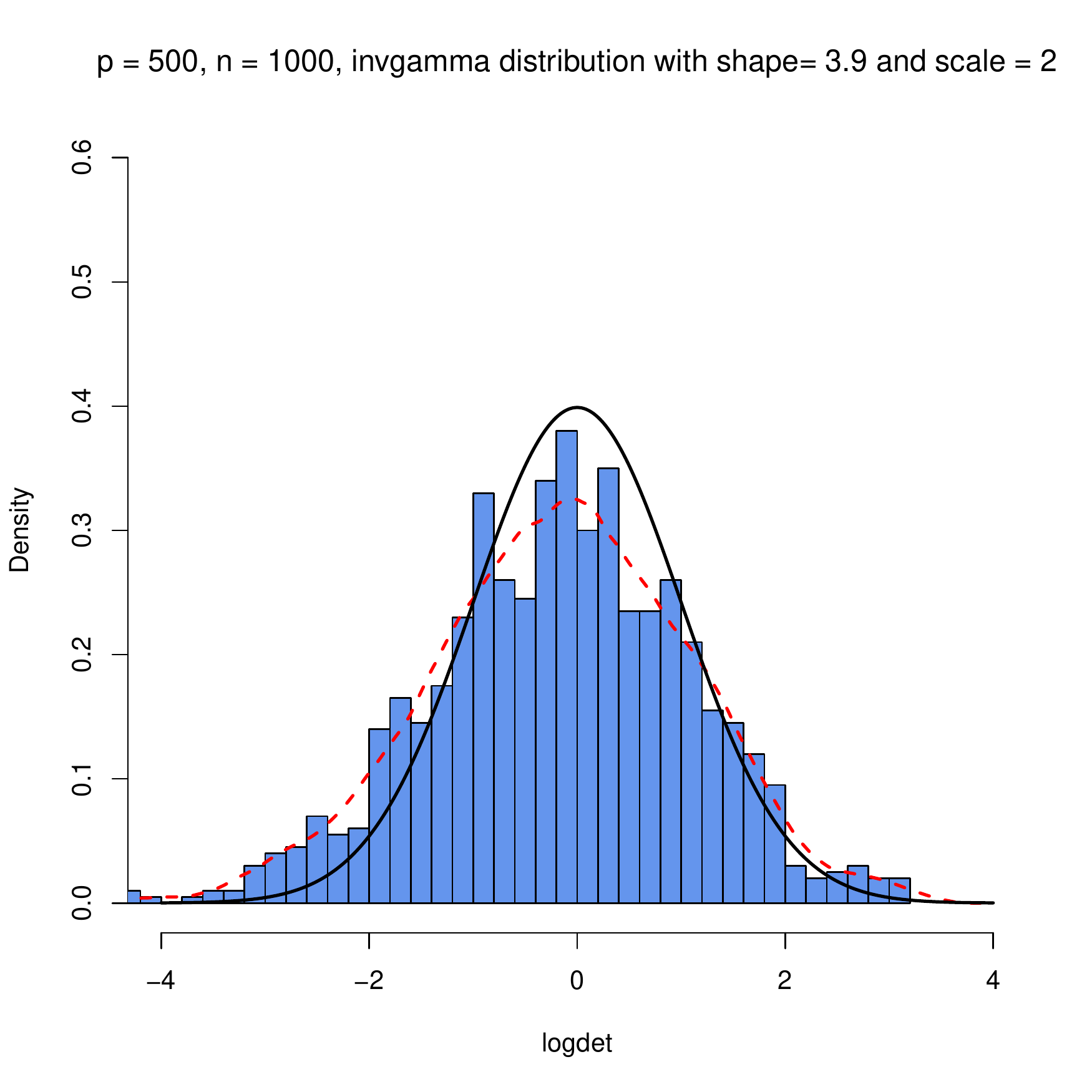}~~
  \includegraphics[scale=0.4]{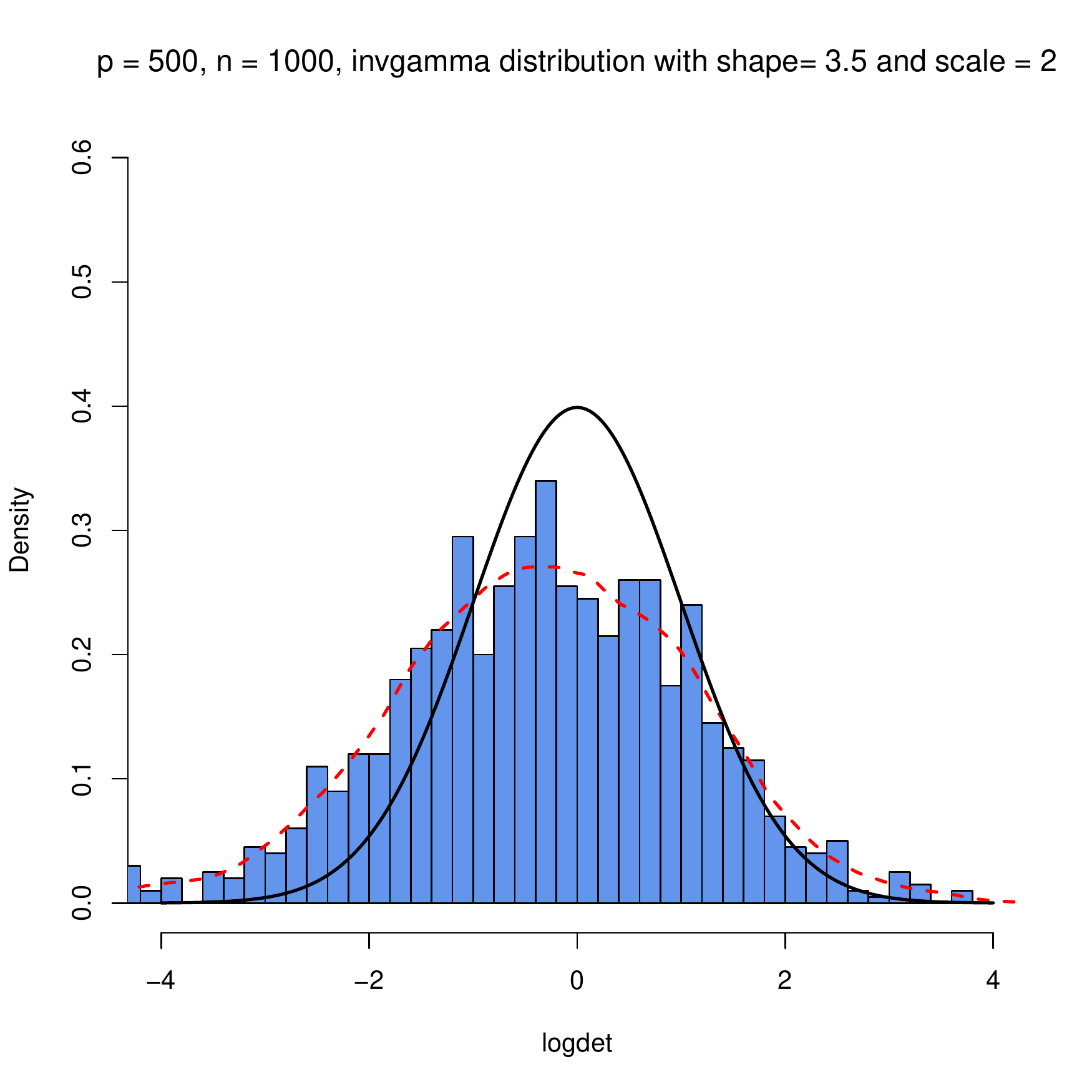}
  \caption{Logarithmic law for inverse gamma distribution with scale $\beta=2$ and shape $\alpha\in \{5,4.5, 3.9, 3.5\}$ for $p=500$ and $n=1000$ with 1000 repetitions.}
  \label{fig2}
\end{figure}

Next, we generate the entries $X_{ij}$ from a non-symmetric distribution, namely inverse gamma with scale parameter $2$ and varying shape parameter. Note that this distribution has a regularly varying tail with index $\alpha$ equal to the shape parameter and the function $L(x)$ from \eqref{eq:regvar} behaving like a constant as $x\to\infty$. Thus, the shape parameter for inverse gamma distribution plays the same role as the degrees of freedom for $t$-distribution, namely if the shape coefficient is smaller than four then the moment of order four does not exist.  Hence, the top row in Figure 2 represents the results when the fourth moment exists, while the pictures in the bottom row represent the case of an infinite fourth moment. One can clearly see that symmetry is vital for logarithmic law to be valid. Indeed, by a careful examination of the proof one can see that asymmetric distribution of $X_{ij}$ as well as a tail parameter $\alpha<3$ could possibly create additional terms in the asymptotic variance and, thus, the CLT in \eqref{eq:main} might not be true anymore.

As a consequence, if our assumptions are violated, the limiting distribution of the logarithmic determinant of the sample correlation matrix still resembles the normal one but with a considerably larger variance. The asymmetry effects seem, however, to have a larger impact on the limiting distribution of $\log\det \bfR$ in the case of heavy tailed data. In case the distribution of heavy-tailed data is not symmetric, it might be beneficial to take an appropriate power transform of the data before using the derived logarithmic law for any testing procedures, for example, testing the uncorrelatedness. \medskip 

Finally, we briefly comment on the extension of our result to $p$-dimensional observations with population covariance $\bfSigma\neq \bfI$, which amounts to replacing the data matrix $\X$ with $\bfSigma^{1/2} \X$, where $\bfSigma^{1/2}$ is the Hermitian square root of $\bfSigma$. In the sample covariance case, since
  \begin{eqnarray*}
   \log \det (\bfSigma^{1/2}\bfS \bfSigma^{1/2}) = \log \det \bfS + \log \det \bfSigma\,,
  \end{eqnarray*}
it is straightforward to obtain a CLT for $\log \det (\bfSigma^{1/2}\bfS \bfSigma^{1/2})$ from \eqref{loglaw:cov}. Unfortunately, there seems to be no such simple relation for the logarithmic determinant of the sample correlation matrix $$\wt \bfR= \{\diag(\bfSigma^{1/2}\bfS \bfSigma^{1/2})\}^{-1/2} \bfSigma^{1/2}\bfS \bfSigma^{1/2}\{\diag(\bfSigma^{1/2}\bfS \bfSigma^{1/2})\}^{-1/2}\,.$$
Recently, \cite{parolya:heiny:kurowicka:2021} used the identity
\begin{eqnarray*}
    \log \det \wt \bfR&=& \log \det (\bfGamma^{1/2} \bfS \bfGamma^{1/2}) - \log \det (\diag(\bfGamma^{1/2} \bfS \bfGamma^{1/2}))\,,
\end{eqnarray*}
where $\bfGamma=\{\diag(\bfSigma)\}^{-1/2}\, \bfSigma\{\diag(\bfSigma)\}^{-1/2}$ is the associated population correlation matrix, to derive a CLT in the case of a finite fourth moment. It is an interesting topic for future research to figure out the dependence on $\bfGamma$ in the heavy-tailed case of infinite fourth moment.

\section{Diagonal part: Exact formula}\label{sec:3}\setcounter{equation}{0}

In this section, we will derive an exact formula for the fourth moment of $\sum_{k=1}^n a_k (nZ_k^2-1)$, where $a_1,\ldots,a_n$ are constants and $Z_1,\ldots, Z_n$ are (essentially) exchangeable random variables satisfying $Z_1^2+\cdots +Z_n^2=1$. We start with the following lemma.
\begin{lemma}\label{lem:fourthmoment}
Let $Z_1,\ldots,Z_n$ be random variables such that, for all positive integers $m_1,\ldots, m_r$ with $m_1+\cdots+m_r\le 4$, $\beta_{2m_1,\ldots, {2m_r}}:=\E[Z_{i_1}^{2m_1}Z_{i_2}^{2m_2} \cdots Z_{i_4}^{2m_4}]$ is finite and invariant under permutations of the indices. 
 Then we have for any numbers $a_1,\ldots,a_n$ with $a_1+\cdots  +a_n=1$ that
	\begin{equation}\label{eq:fourthdiagcentered}
	\begin{split}
	\E&\Big[\Big(\sum_{k=1}^n a_k (Z_k^2-\E[Z_k^2])\Big)^4\Big]
	=S_4\beta_{8}+4(S_3-S_4)\beta_{6,2}-4S_3\beta_{2} \beta_{6}
	+3(S_2^2-S_4)\beta_{4,4}\\
	&+6(S_2-S_2^2-2S_3+2S_4)\beta_{4,2,2}+12(-S_2+S_3)\beta_{2}\beta_{4,2}+4(3S_2-2S_3-1)\beta_{2}\beta_{2,2,2}\\
	&+(-6S_2+3S_2^2+8S_3-6S_4+1)\beta_{2,2,2,2}
	+6(1-S_2)\beta_{2}^2\beta_{2,2}+6S_2\beta_{2}^2\beta_{4}-3\beta_{2}^4\,,
	\end{split}
  \end{equation}
where for $j\ge 1$, we define $S_j=a_1^j+\cdots  +a_n^j$. Moreover, we have
	\begin{equation}\label{eq:fourthdiag}
	\begin{split}
	\E\Big[\Big(\sum_{k=1}^n a_k Z_k^2\Big)^4\Big]&=
	S_4 \beta_8+ 4(S_3-S_4)\beta_{6,2}+
	6(S_2-S_2^2-2S_3+2S_4)\beta_{4,2,2}\\
	&\quad+ 3(S_2^2-S_4)\beta_{4,4}+	(1-6S_2+3S_2^2+8S_3-6S_4)\beta_{2,2,2,2}\,.
	\end{split}
  \end{equation}
\end{lemma}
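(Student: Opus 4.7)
The plan is to expand both sides by brute force and classify the index tuples by the set partition they induce. Writing
\[
\E\Big[\Big(\sum_{k=1}^n a_k Z_k^2\Big)^m\Big]=\sum_{(k_1,\ldots,k_m)\in\{1,\ldots,n\}^m}a_{k_1}\cdots a_{k_m}\,\E[Z_{k_1}^2\cdots Z_{k_m}^2]\,,
\]
I would partition the set of index tuples according to the set partition $\pi$ of $\{1,\ldots,m\}$ that records the equalities among the $k_j$; the permutation-invariance hypothesis then collapses $\E[Z_{k_1}^2\cdots Z_{k_m}^2]$ to a single $\beta$-moment depending only on $\pi$, and the sum factors as that $\beta$-moment times a symmetric function of $a_1,\ldots,a_n$.

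For \eqref{eq:fourthdiag}, five partition types arise for $m=4$: $(4),\,(3,1),\,(2,2),\,(2,1,1),\,(1,1,1,1)$. For each type I would compute the corresponding sum of $a$-monomials by inclusion–exclusion in terms of the power sums $S_j$: the first three contribute $S_4\beta_8$, $4(S_3 S_1-S_4)\beta_{6,2}$, and $3(S_2^2-S_4)\beta_{4,4}$; the $(2,1,1)$ type contributes $6\sum_{k,\ell,m\text{ pairwise distinct}}a_k^2 a_\ell a_m\cdot\beta_{4,2,2}$; and the all-distinct type gives $4!\,e_4(a_1,\ldots,a_n)\,\beta_{2,2,2,2}$, where $e_4$ is the fourth elementary symmetric polynomial. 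Newton's identities (or a direct inclusion–exclusion) express $4!\,e_4$ as a polynomial in $S_1,S_2,S_3,S_4$. Substituting $S_1=1$ throughout and simplifying delivers \eqref{eq:fourthdiag}.

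For the centred identity \eqref{eq:fourthdiagcentered} I would exploit $\sum_k a_k=1$ to rewrite
\[
\sum_{k=1}^n a_k\bigl(Z_k^2-\beta_2\bigr)=\Bigl(\sum_{k=1}^n a_k Z_k^2\Bigr)-\beta_2\,,
\]
so binomial expansion reduces matters to evaluating $\E[(\sum a_k Z_k^2)^j]$ for $j=1,2,3$ as well as $j=4$. Running the same partition/inclusion–exclusion procedure at each order yields $\E[\sum a_k Z_k^2]=\beta_2$, $\E[(\sum a_k Z_k^2)^2]=S_2\beta_4+(1-S_2)\beta_{2,2}$, and $\E[(\sum a_k Z_k^2)^3]=S_3\beta_6+3(S_2-S_3)\beta_{4,2}+(1-3S_2+2S_3)\beta_{2,2,2}$. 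Substituting these together with \eqref{eq:fourthdiag} into $\sum_{j=0}^4\binom{4}{j}(-\beta_2)^{4-j}\E[(\sum a_k Z_k^2)^j]$ and collecting the coefficient of each $\beta$-monomial reproduces \eqref{eq:fourthdiagcentered}.

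The main obstacle is not conceptual but purely combinatorial bookkeeping: keeping track of the nine distinct $\beta$-monomials and their coefficients as polynomials in $S_2,S_3,S_4$ while combining the lower-order moments. Organizing a small table indexed by partition type and by moment order $j$, recording the number of partitions of that type together with the value of the corresponding symmetric $a$-sum (after imposing $S_1=1$), makes the final collection of coefficients mechanical and guards against arithmetic slips.
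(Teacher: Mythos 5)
Your proposal is correct and follows essentially the same route as the paper: expand the fourth power, classify index tuples by their coincidence pattern so that permutation invariance collapses each class to a single $\beta$-moment, reduce the resulting symmetric sums of the $a_k$ to the power sums $S_j$ using $S_1=1$, and obtain the centred identity \eqref{eq:fourthdiagcentered} from the binomial theorem applied to the lower-order moments. The only cosmetic difference is that the paper imports the third-moment formula from the quoted quadratic-form result (Lemma~\ref{lem:quf}) whereas you derive it directly by the same partition count; your intermediate expressions for $j=1,2,3$ agree with \eqref{eq:llff11} and \eqref{eq:thirdjuhu}, and both you and the paper leave the final coefficient collection as a mechanical computation.
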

\begin{proof}
We note that all sums in this proof run from $1$ to $n$. Using $a_1+\cdots  +a_n=1$, it is easy to check that 
\begin{align}
\sum_{k\neq \ell} a_k a_{\ell}&=1-S_2\,, \qquad 
\sum_{k\neq \ell} a_k^2 a_{\ell}=S_2-S_3\,, \qquad 
\sum_{k\neq \ell} a_k^3 a_{\ell}=S_3-S_4\,, \label{eq:fourth1}\\
\sum_{k\neq \ell} a_k^2 a_{\ell}^2&=S_2^2-S_4\,, \qquad 
\sum_{k\neq \ell\neq j} a_k^2 a_{\ell} a_j=S_2-S_2^2-2S_3+2S_4\,, \label{eq:fourth2}\\
\sum_{k\neq \ell\neq j} a_k a_{\ell} a_j&=1-3S_2+2S_3\,, \qquad 
\sum_{k\neq \ell\neq j\neq h} a_k a_{\ell} a_j a_h=1-6S_2 +3S_2^2+8S_3-6 S_4\,.\label{eq:fourth3} 
\end{align}
For example, we shall show the second relation in \eqref{eq:fourth2},
\begin{equation*}
\sum_{k\neq \ell\neq j} a_k^2 a_{\ell} a_j= \sum_{k\neq \ell} a_k^2 a_{\ell}(1-a_k-a_{\ell})=\sum_{k\neq \ell} a_k^2 a_{\ell}-\sum_{k\neq \ell} a_k^3 a_{\ell}-\sum_{k\neq \ell} a_k^2 a_{\ell}^2=S_2-S_2^2-2S_3+2S_4\,.
\end{equation*}
We have the decomposition
\begin{equation}\label{eq:sdgsdsd}
	\begin{split}
	\E\Big[\Big(\sum_{k=1}^n a_k Z_k^2\Big)^4\Big]&=
	\E\Big[\Big(\sum_{k=1}^n a_k^2 Z_k^4 \Big)^2\Big] +
	2\E\Big[\sum_{j=1}^n a_j^2 Z_j^4\sum_{k\neq \ell} a_ka_{\ell} Z_k^2Z_{\ell}^2 \Big]+
	\E\Big[\Big( \sum_{k\neq \ell} a_ka_{\ell} Z_k^2Z_{\ell}^2 \Big)^2\Big]\\
	&=: I+II+III\,.
	\end{split}
  \end{equation}
For the first term, we get
$$I=\beta_8 \sum_{k=1}^n a_k^4+ \beta_{4,4} \sum_{k\neq \ell} a_k^2 a_{\ell}^2=\beta_8 S_4+\beta_{4,4} (S_2^2-S_4)\,,$$
where \eqref{eq:fourth1} was used for the last equality. In view of \eqref{eq:fourth1} and \eqref{eq:fourth2}, we have
\begin{equation*}
\begin{split}
II&=2\beta_{6,2}\sum_{k\neq \ell} a_ka_{\ell}(a_k^2+a_{\ell}^2)+ 2\beta_{4,2,2}\sum_{k\neq \ell} a_ka_{\ell} (S_2-a_k^2-a_{\ell}^2)\\
&= \beta_{6,2} (4S_3-4S_4)+ \beta_{4,2,2}(2 S_2-2 S_2^2 -4S_3+4S_4)\,.
\end{split}
\end{equation*}
Using \eqref{eq:fourth1}--\eqref{eq:fourth3} for the third equality, we find for the third term that
\begin{equation*}
\begin{split}
III&=\beta_{4,4}\,2\sum_{k\neq \ell} a_k^2 a_{\ell}^2+ \beta_{4,2,2}\mathop{\sum_{k\neq \ell,j\neq h}}_{\#\{k,\ell,j,h\}=3} a_k a_{\ell}a_j a_h+ \beta_{2,2,2,2} \sum_{k\neq \ell\neq j\neq h} a_k a_{\ell} a_j a_h\\
&=\beta_{4,4}\,2\sum_{k\neq \ell} a_k^2 a_{\ell}^2+ \beta_{4,2,2}\, 4\sum_{k\neq \ell\neq j} a_k^2 a_{\ell}a_j + \beta_{2,2,2,2} \sum_{k\neq \ell\neq j\neq h} a_k a_{\ell} a_j a_h\\
&= 2\beta_{4,4}(S_2^2-S_4) +4\beta_{4,2,2}(S_2-S_2^2-2S_3+2S_4) +\beta_{2,2,2,2}(1-6S_2+3S_2^2+8S_3-6S_4)\,.
\end{split}
\end{equation*}
Simplifying $I+II+III$ establishes \eqref{eq:fourthdiag} by virtue of \eqref{eq:sdgsdsd}.

Next, we turn to \eqref{eq:fourthdiagcentered}.
To this end, let $\bfA$ be the $n\times n$ diagonal matrix with diagonal entries $a_1,\ldots,a_n$. By Lemma~\ref{lem:quf}, we have
\begin{align}
	\E&\Big[\Big(\sum_{k=1}^n a_k Z_k^2\Big)^3\Big]=  
	\beta_{2,2,2}[(\tr \bfA)^3 +6 \tr \bfA \tr(\bfA^2)+8 \tr(\bfA^3)]+ (\beta_6 -15 \beta_{4,2}+30\beta_{2,2,2}) \tr(\bfA \circ \bfA \circ \bfA ) \nonumber\\
	&\quad +(\beta_{4,2}-3\beta_{2,2,2}) [3\tr \bfA \tr(\bfA \circ \bfA)+12 \tr(\bfA \circ \bfA^2)] \nonumber\\
	&=  \beta_{2,2,2}[1 +6  S_2+8 S_3]+ (\beta_6 -15 \beta_{4,2}+30\beta_{2,2,2}) S_3 +(\beta_{4,2}-3\beta_{2,2,2}) [3 S_2+12 S_3]\,, \label{eq:thirdjuhu}
  \end{align}
where $\circ$ denotes the Hadamard product.
A simple calculation using $a_1+\cdots+a_n=1$ yields
\begin{equation}\label{eq:llff11}
\E\Big[\Big(\sum_{k=1}^n a_k Z_k^2\Big)^2\Big]= \beta_4 S_2 + \beta_{2,2} (1-S_2)\,.
\end{equation}
By the binomial theorem, we have  
\begin{equation}\label{eq:rhdhdff}
\begin{split}
\E&\Big[\Big(\sum_{k=1}^n a_k Z_k^2-\beta_2)\Big)^4\Big]=  \sum_{t=0}^4 \binom{4}{t} \E \Big[ \Big(\sum_{k=1}^n a_kZ_k^2 \Big)^t\Big] (-\beta_2)^{4-t} \,.
\end{split}
\end{equation}
Plugging \eqref{eq:llff11}, \eqref{eq:thirdjuhu} and \eqref{eq:fourthdiag} into \eqref{eq:rhdhdff} and then simplifying establishes \eqref{eq:fourthdiagcentered}. We omit details of this lengthy computation.
\end{proof}

Additionally assuming $Z_1^2+\cdots +Z_n^2=1$, the relation between the $\beta$'s is captured by the following crucial lemma.
\begin{lemma}\label{lem:betas}
Let $Z_1,\ldots,Z_n$ be random variables such that $Z_1^2+\cdots +Z_n^2=1$ and, for all positive integers $m_1,\ldots, m_r$ with $m_1+\cdots+m_r\le 4$, $\beta_{2m_1,\ldots, {2m_r}}:=\E[Z_{i_1}^{2m_1}Z_{i_2}^{2m_2} \cdots Z_{i_4}^{2m_4}]$ is invariant under permutations of the indices.
Then it holds that $\beta_2=1/n$ and
\begin{align}
\beta_4 &= \frac{1}{n}-(n-1)\beta_{2,2}\,,\qquad \beta_{4,2} =\frac{1}{2} \beta_{2,2}-\frac{n-2}{2}\beta_{2,2,2}\,, \label{eq1}\\
\beta_6 &= \frac{1}{n}-\frac{3(n-1)}{2} \beta_{2,2} +\frac{(n-1)(n-2)}{2} \beta_{2,2,2} \,, \label{eq2}\\
\beta_{6,2}&= \frac{1}{2}\beta_{2,2}-\frac{5 (n-2)}{6} \beta_{2,2,2}+\frac{(n-2)(n-3)}{3} \beta_{2,2,2,2} -\beta_{4,4}   \,, \label{eq4}\\
\beta_{4,2,2}&= \frac{1}{3}\beta_{2,2,2}+\frac{3-n}{3} \beta_{2,2,2,2}   \,, \label{eq5}\\
\beta_{8}&= \frac{1}{n}+2(1-n)\beta_{2,2} +\Big(\frac{4 n^2}{3}-4n+\frac{8}{3}\Big) \beta_{2,2,2} \notag \\ 
& \quad +\Big( \frac{-n^3}{3}+2n^2-\frac{11 n}{3} +2 \Big)\beta_{2,2,2,2}+(n-1)\beta_{4,4}.\label{eq3}
\end{align}
\end{lemma}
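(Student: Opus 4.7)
The plan is to systematically exploit the constraint $Z_1^2+\cdots+Z_n^2=1$ by multiplying it by various squared monomials in the $Z_i$'s and taking expectations, using the exchangeability of the joint law of $(Z_1^2,\ldots,Z_n^2)$ under permutations of indices. Each such manipulation produces a linear relation among $\beta$'s of a fixed total degree, and I would solve these relations sequentially, passing from lower to higher degrees.

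First I would take expectations of the constraint directly, which gives $n\beta_2=1$ and hence $\beta_2=1/n$. To obtain the remaining identities I would multiply the constraint by a squared monomial $Z_{i_1}^{2m_1}\cdots Z_{i_r}^{2m_r}$ with distinct indices and take expectations, separating on the left-hand side the $r$ terms in which the extra factor $Z_k^2$ carries an index in $\{i_1,\ldots,i_r\}$ from the $n-r$ terms where it introduces a fresh index. By exchangeability each such sum collapses to a single coefficient times a $\beta$.

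Concretely: multiplying by $Z_1^2$ gives $\beta_4+(n-1)\beta_{2,2}=1/n$, which is the first identity in \eqref{eq1}. Multiplying by $Z_1^2Z_2^2$ gives $2\beta_{4,2}+(n-2)\beta_{2,2,2}=\beta_{2,2}$, which is the second identity in \eqref{eq1}. Multiplying by $Z_1^4$ gives $\beta_6+(n-1)\beta_{4,2}=\beta_4$, and substituting the two parts of \eqref{eq1} yields \eqref{eq2}. Multiplying by $Z_1^2Z_2^2Z_3^2$ gives $3\beta_{4,2,2}+(n-3)\beta_{2,2,2,2}=\beta_{2,2,2}$, which is \eqref{eq5}. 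Multiplying by $Z_1^4Z_2^2$ gives $\beta_{6,2}+\beta_{4,4}+(n-2)\beta_{4,2,2}=\beta_{4,2}$, which combined with \eqref{eq1} and \eqref{eq5} gives \eqref{eq4}. Finally, multiplying by $Z_1^6$ gives $\beta_8+(n-1)\beta_{6,2}=\beta_6$, and substituting \eqref{eq4} and \eqref{eq2} produces \eqref{eq3}.

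The only real obstacle here is bookkeeping: each identity requires substituting several prior formulas and simplifying a polynomial in $n$. For example, in the derivation of \eqref{eq3} one has to merge the $\beta_{2,2,2}$-coefficients $(n-1)(n-2)/2$ and $5(n-1)(n-2)/6$ into $4(n-1)(n-2)/3$, and to expand $(n-1)(n-2)(n-3)/3$ to extract the $\beta_{2,2,2,2}$-coefficient. No probabilistic or analytic input is required beyond exchangeability and the norm-one constraint; the lemma is a purely algebraic consequence of the two.
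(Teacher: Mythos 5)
Your proof is correct and follows essentially the same route as the paper: the identities you obtain by multiplying the constraint by $Z_1^{2k}$, $Z_1^{2k}Z_2^2$ and $Z_1^2Z_2^2Z_3^2$ are exactly the paper's relations \eqref{eq:trick1}--\eqref{eq:trick3}, and your sequential substitutions (with the stated coefficient checks) carry out the "tedious but straightforward computations" the paper leaves implicit. The only cosmetic difference is that you bypass the multinomial-theorem identities \eqref{id1}--\eqref{id3}, which are not needed since they are consequences of the same trick.
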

\begin{proof}
Since $Z_{1}^2+\cdots+Z_{n}^2=1$, an application of the multinomial theorem shows that for $k\ge 1$,
\begin{equation*}
1=(Z_{1}^2+\cdots+Z_{n}^2)^k=
\sum_{r=1}^{k}
 \mathop{\sum_{m_1+\cdots +m_r=k }}_{m_j \ge 1} \binom{n}{r}  \binom{k}{m_1,\ldots, m_r} 
\beta_{2m_1,\ldots, {2m_r}}\,.
\end{equation*}
In particular, for $k=2,3,4$, one obtains
\begin{align}
1&= n \beta_4+n(n-1) \beta_{2,2}\,, \label{id1}\\
1&=n\beta_6 + 3 n(n-1)\beta_{4,2}+n(n-1)(n-2) \beta_{2,2,2}\,, \label{id2}\\
1&=n \beta_8+4 n(n-1) \beta_{6,2}+3n(n-1) \beta_{4,4}+6n(n-1)(n-2) \beta_{4,2,2} \label{id3}\\
&\quad +n(n-1)(n-2)(n-3) \beta_{2,2,2,2} \nonumber\,.
\end{align}
Since $Z_1^2+\cdots+Z_2^2=1$, it holds
$Z_{1}^{2k}=Z_{1}^{2k} (Z_{1}^2+\cdots+Z_{n}^2)$.
Taking expectation one obtains
\begin{equation}\label{eq:trick1}
\beta_{2k}=\beta_{2k+2}+(n-1)\beta_{2k,2} \,,\qquad k=1,2,3\,.
\end{equation}
Using $Z_{1}^{2k}Z_{2}^2=Z_{1}^{2k} Z_{2}^2(Z_{1}^2+\cdots+Z_{n}^2)$, one analogously gets
\begin{equation}\label{eq:trick2}
\beta_{2k,2}=\beta_{2k+2,2}+\beta_{2k,4}+(n-2)\beta_{2k,2,2} \,,\qquad k=1,2\,,
\end{equation}
\begin{equation}\label{eq:trick3}
\beta_{2,2,2}=3\beta_{4,2,2}+(n-3)\beta_{2,2,2,2} \,.
\end{equation}
The lemma now follows from equations \eqref{id1}--\eqref{eq:trick3} and some tedious but straightforward computations.
\end{proof}

We now state the main result of this section.
\begin{theorem}\label{thm:fourthmoment}
Let $Z_1,\ldots,Z_n$ be random variables such that $Z_1^2+\cdots +Z_n^2=1$ and, for all positive integers $m_1,\ldots, m_r$ with $m_1+\cdots+m_r\le 4$, $\beta_{2m_1,\ldots, {2m_r}}:=\E[Z_{i_1}^{2m_1}Z_{i_2}^{2m_2} \cdots Z_{i_4}^{2m_4}]$ is invariant under permutations of the indices.
 Then we have for any numbers $a_1,\ldots,a_n$ with $a_1+\cdots  +a_n=1$ that
	\begin{equation}\label{eq:fourthcomplete}
	\E\Big[\Big(\sum_{k=1}^n a_k (nZ_k^2-1)\Big)^4\Big]
	=K_{4,4}n^4\beta_{4,4}+K_{2,2}n^2\beta_{2,2}+K_{2,2,2}n^3\beta_{2,2,2}+K_{2,2,2,2}n^4\beta_{2,2,2,2}+ K\,,
  \end{equation}
where $S_j=a_1^j+\cdots  +a_n^j$, $j\ge 1$ and
\begin{align*}
K_{4,4}&=3S_2^2-4S_3+nS_4\,,\qquad K=6nS_2-4n^2S_3+ n^3S_4-3\,, \\
K_{2,2}&=-12 nS_2+8n^2 S_3-2n^3S_4+6\,, \\
K_{2,2,2}&=8nS_2-2nS_2^2+\frac{8n(1-2n)}{3}S_3+\frac{2n^2(2n-1)}{3}S_4-4\,, \\
K_{2,2,2,2}&=-2nS_2+(2n-3)S_2^2+\frac{4(n^2-2n+3)}{3}S_3-\frac{n(n^2-2n+3)}{3}S_4+1\,.
\end{align*}
In particular, we have
\begin{equation}\label{eq:sumK}
K+K_{4,4}+K_{2,2}+K_{2,2,2}+K_{2,2,2,2}=0\,.
\end{equation}
\end{theorem}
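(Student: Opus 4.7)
The plan is to reduce the claim to Lemma~\ref{lem:fourthmoment} in its centered form \eqref{eq:fourthdiagcentered} and then to eliminate the $\beta$'s that do not appear in \eqref{eq:fourthcomplete} via the identities of Lemma~\ref{lem:betas}. First, I note that by Lemma~\ref{lem:betas} we have $\E[Z_k^2]=\beta_2=1/n$ for each $k$, and since $a_1+\cdots+a_n=1$,
\begin{equation*}
\sum_{k=1}^n a_k(nZ_k^2-1) \;=\; n\sum_{k=1}^n a_k Z_k^2 - 1 \;=\; n\sum_{k=1}^n a_k\bigl(Z_k^2-\E[Z_k^2]\bigr).
\end{equation*}
Raising to the fourth power and taking expectations shows that the left-hand side of \eqref{eq:fourthcomplete} equals $n^4$ times the right-hand side of \eqref{eq:fourthdiagcentered}, so that formula provides the starting point.

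Next, I would feed the identities \eqref{eq1}--\eqref{eq3} of Lemma~\ref{lem:betas} together with $\beta_2=1/n$ into \eqref{eq:fourthdiagcentered}. Each of the ``non-core'' quantities $\beta_4,\beta_6,\beta_8,\beta_{4,2},\beta_{6,2},\beta_{4,2,2}$ is thereby rewritten as an affine combination, with coefficients polynomial in $n$, of the four ``core'' moments $\beta_{2,2},\beta_{2,2,2},\beta_{4,4},\beta_{2,2,2,2}$ plus an explicit constant contributed by the $\beta_2=1/n$ piece. After expansion, every summand in \eqref{eq:fourthdiagcentered} is then either purely numerical or a polynomial in $S_2,S_3,S_4,n$ times one of these four core $\beta$'s. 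Multiplying by $n^4$ and collecting the coefficients of $n^4\beta_{4,4}$, $n^2\beta_{2,2}$, $n^3\beta_{2,2,2}$ and $n^4\beta_{2,2,2,2}$, with the remaining constant piece pushed into $K$, yields exactly \eqref{eq:fourthcomplete}. The main obstacle is entirely computational: \eqref{eq:fourthdiagcentered} already contains ten summands, and several substitutions (most notably that for $\beta_8$) contribute simultaneously to all four core coefficients, so I would perform the final simplification systematically and back it up with a symbolic algebra check to confirm the stated closed forms for $K_{4,4},K_{2,2},K_{2,2,2},K_{2,2,2,2},K$.

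Finally, for \eqref{eq:sumK} I would invoke a degenerate consistency specialization: take $Z_1^2=\cdots=Z_n^2=1/n$ almost surely. The hypotheses $Z_1^2+\cdots+Z_n^2=1$ and permutation invariance of the moments are trivially satisfied, and $\beta_{2m_1,\ldots,2m_r}=n^{-(m_1+\cdots+m_r)}$ for every admissible string of indices. Consequently
\begin{equation*}
n^4\beta_{4,4}=n^2\beta_{2,2}=n^3\beta_{2,2,2}=n^4\beta_{2,2,2,2}=1,
\end{equation*}
whereas the left-hand side of \eqref{eq:fourthcomplete} vanishes since $\sum_k a_k(nZ_k^2-1)=0$. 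Reading \eqref{eq:fourthcomplete} under this specialization produces $K+K_{4,4}+K_{2,2}+K_{2,2,2}+K_{2,2,2,2}=0$, which is \eqref{eq:sumK}.
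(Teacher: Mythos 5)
Your proposal is correct and follows essentially the same route as the paper: reduce the left-hand side to $n^4\,\E[(\sum_k a_k(Z_k^2-1/n))^4]$ using $\sum_k a_k=1$ and $\beta_2=1/n$, evaluate it via the centered formula \eqref{eq:fourthdiagcentered} of Lemma~\ref{lem:fourthmoment}, eliminate the non-core moments with the identities of Lemma~\ref{lem:betas} (verified symbolically), and obtain \eqref{eq:sumK} by the degenerate specialization $Z_1^2=\cdots=Z_n^2=1/n$, which is exactly the paper's "more insightful" argument. No gaps.
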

\begin{proof}
We have 
$$\E\Big[\Big(\sum_{k=1}^n a_k (nZ_k^2-1)\Big)^4\Big]=n^4 \E\Big[\Big(\sum_{k=1}^n a_k (Z_{k}^2-1/n)\Big)^4\Big]\,.$$
The \rhs~ can be explicitly computed using Lemma~\ref{lem:fourthmoment}. Plugging in the formulas from Lemma~\ref{lem:betas}, one can check, for example with mathematical software, that \eqref{eq:fourthcomplete} holds.

Even though, equation \eqref{eq:sumK} follows from the defintions of $ K,K_{4,4},K_{2,2},K_{2,2,2},K_{2,2,2,2}$. We will provide an additional proof which is more insightful. To this end, set $Z_1=\cdots=Z_n=n^{-1/2}$ which implies that the \lhs~in \eqref{eq:fourthcomplete} is zero and that the \rhs~is $K+K_{4,4}+K_{2,2}+K_{2,2,2}+K_{2,2,2,2}$.
\end{proof}
While the main focus of this paper is on the sample correlation matrix $\bfR=\Y\Y^{\top}$ (see \eqref{eq:defRY}), Theorem~\ref{thm:fourthmoment} might be of independent interest. We will apply Theorem~\ref{thm:fourthmoment} to the rows of $\Y$. Since $Y_{11},\ldots, Y_{1n}$ are exchangeable random variables satisfying $Y_{11}^2+\cdots+Y_{1n}^2=1$, one obtains the following corollary.
\begin{corollary}\label{cor:fourthmoment}
Let $Y_{11},\ldots,Y_{1n}$ be defined in \eqref{def:R} and for all positive integers $m_1,\ldots, m_r$ with $m_1+\cdots+m_r\le 4$ set $\beta_{2m_1,\ldots, {2m_r}}:=\E[Y_{11}^{2m_1}Y_{12}^{2m_2} \cdots Y_{1r}^{2m_4}]$.
 Then we have for any numbers $a_1,\ldots,a_n$ with $a_1+\cdots  +a_n=1$ that
	\begin{equation}\label{eq:corfourthcomplete}
	\E\Big[\Big(\sum_{k=1}^n a_k (nY_{1k}^2-1)\Big)^4\Big]
	=K_{4,4}n^4\beta_{4,4}+K_{2,2}n^2\beta_{2,2}+K_{2,2,2}n^3\beta_{2,2,2}+K_{2,2,2,2}n^4\beta_{2,2,2,2}+ K\,,
  \end{equation}
where $S_j=a_1^j+\cdots  +a_n^j$, $j\ge 1$, and $K_{4,4},K_{2,2},K_{2,2,2},K_{2,2,2,2},K$ are defined in Theorem~\ref{thm:fourthmoment}.
\end{corollary}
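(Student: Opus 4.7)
The plan is to reduce the corollary to a direct invocation of Theorem~\ref{thm:fourthmoment} applied with $Z_k = Y_{1k}$, so the only work is checking the two hypotheses of that theorem for the first row of $\Y$.

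First, I would verify the spherical constraint. By definition \eqref{def:R}, $Y_{1j}^2 = X_{1j}^2/(X_{11}^2+\cdots+X_{1n}^2)$, and summing over $j$ gives
$$Y_{11}^2 + \cdots + Y_{1n}^2 = \frac{X_{11}^2+\cdots+X_{1n}^2}{X_{11}^2+\cdots+X_{1n}^2} = 1\,,$$
almost surely on $\{X_{11}^2+\cdots+X_{1n}^2 > 0\}$, which has probability one since the $X_{1j}$'s are non-degenerate.

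Next I would verify the permutation invariance of the joint moments. Because $X_{11},\ldots,X_{1n}$ are i.i.d., for any permutation $\pi$ of $\{1,\ldots,n\}$ the vector $(X_{1\pi(1)},\ldots,X_{1\pi(n)})$ has the same law as $(X_{11},\ldots,X_{1n})$; since the denominator $X_{11}^2+\cdots+X_{1n}^2$ is symmetric in its arguments, this lifts to
$$(Y_{1\pi(1)},\ldots,Y_{1\pi(n)}) \eid (Y_{11},\ldots,Y_{1n})\,.$$
Hence $\E[Y_{1i_1}^{2m_1}\cdots Y_{1i_r}^{2m_r}]$ depends only on the multiset $\{2m_1,\ldots,2m_r\}$ rather than on the specific distinct indices $i_1,\ldots,i_r$, which is exactly the invariance assumed in Theorem~\ref{thm:fourthmoment}. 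Finiteness of all these moments is automatic, since $|Y_{1j}|\leq 1$ almost surely, so every $\beta_{2m_1,\ldots,2m_r}\in[0,1]$.

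With the hypotheses of Theorem~\ref{thm:fourthmoment} satisfied for $Z_k=Y_{1k}$, the identity \eqref{eq:corfourthcomplete} is just \eqref{eq:fourthcomplete} translated into the notation of the self-normalized row $(Y_{11},\ldots,Y_{1n})$, with the same constants $K, K_{4,4}, K_{2,2}, K_{2,2,2}, K_{2,2,2,2}$. I do not anticipate any real obstacle here; the corollary is a direct specialization rather than an independent computation, and its significance is that the abstract identity can now be deployed on the rows of $\Y$ in the subsequent analysis of $\log\det\bfR$.
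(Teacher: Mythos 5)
Your proposal is correct and matches the paper's own (one-line) justification: the corollary is obtained by applying Theorem~\ref{thm:fourthmoment} to $Z_k=Y_{1k}$, after noting that the first row of $\Y$ satisfies $Y_{11}^2+\cdots+Y_{1n}^2=1$ and that exchangeability of $(Y_{11},\ldots,Y_{1n})$ (inherited from the i.i.d.\ entries and the symmetric denominator) gives the required permutation invariance of the $\beta$'s. Your additional remarks on finiteness via $|Y_{1j}|\le 1$ are a harmless elaboration of the same argument.
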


\section{Proof of the main result}\setcounter{equation}{0}\label{sec:proofmain}

\subsection{Preliminaries}

Throughout this section, for integers $k_1,\ldots,k_r$, we will use the notation 
$$\beta_{2k_1,\ldots, {2k_r}}:=\E [ Y_{11}^{2k_1} \cdots Y_{1r}^{2k_r} ]\,,$$
where we recall the definition of $Y_{ij}$ from \eqref{def:R}.
Since $\beta_{2k_1,\ldots, {2k_r}}=\beta_{2k_{\pi(1)},\ldots, 2k_{\pi(r)}}$ for any permutation $\pi$ on $\{1,\ldots,r\}$ we will typically write the indices in decreasing order. For example, instead of $\beta_{2,4}$ we prefer writing $\beta_{4,2}$. 
Now we compute the precise asymptotic behavior of $\beta_{2k_1,\ldots, {2k_r}}$.

\begin{lemma}\label{lem:moment24}
Let $\alpha\in (2,4)$ and assume that $\E[X_{11}^2]=1$ and $\P(|X_{11}|>x)=x^{-\alpha} L(x)$ for $x>0$ where $L$ is a slowly varying function. Define the $Y_{kn}$'s as in \eqref{def:R} and consider integers $k_1,\ldots,k_r\ge 1$. Then it holds
\begin{equation}\label{moment24}
\lim_{\nto} \frac{n^{N_1(1-\alpha/2)+ r\alpha/2}}{L^{r-N_1}(n^{1/2})}  \beta_{2k_1,\ldots, {2k_r}} =   \frac{(\alpha/2)^{r-N_1}\Gamma(N_1(1-\alpha/2)+ r\alpha/2) \, \prod_{i:k_i\ge 2} \Gamma(k_i-\alpha/2)}{\Gamma(k_1+\cdots+k_r)}\,,
\end{equation}
where $N_1=\#\{1\le i\le r: k_i=1\}$.
In particular, we have
\begin{equation}\label{highestmoment24}
\lim_{\nto}\frac{n^{\alpha/2}}{L(n^{1/2})}\beta_{2k} = \frac{\alpha \Gamma(\alpha/2)\Gamma(k-\alpha/2)}{2 \Gamma(k)}\,, \qquad k\ge 1\,.
\end{equation}
\end{lemma}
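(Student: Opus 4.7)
The plan is to convert the expectation of $X_{11}^{2k_1}\cdots X_{1r}^{2k_r}/T^{K}$ with $T=\sum_{j=1}^n X_{1j}^2$ and $K=k_1+\cdots+k_r$ into a Laplace-transform integral via
$$T^{-K}=\frac{1}{\Gamma(K)}\int_0^\infty t^{K-1}e^{-tT}\,dt,$$
then use independence of the $X_{1j}$'s to factor the expectation. Writing $\phi(t):=\E[e^{-tX_{11}^2}]$ and $\phi_k(t):=\E[X_{11}^{2k}e^{-tX_{11}^2}]$, this gives
$$\beta_{2k_1,\ldots,2k_r}=\frac{1}{\Gamma(K)}\int_0^\infty t^{K-1}\Big(\prod_{i=1}^r\phi_{k_i}(t)\Big)\phi(t)^{n-r}\,dt.$$
After substituting $t=s/n$, the problem reduces to a classical Laplace-type asymptotic analysis with only $\phi$ and $\phi_k$ involved.

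The three ingredients feeding the integrand are: (a) $\phi(s/n)^{n-r}\to e^{-s}$ because $\phi$ is differentiable at $0$ with $\phi'(0)=-\E[X_{11}^2]=-1$; (b) for $k_i=1$, $\phi_1(s/n)\to\E[X_{11}^2]=1$ by continuity; and (c) for $k_i\ge 2$ the heavy tail drives the asymptotics. For (c), I would integrate by parts to write
$$\phi_k(t)=\int_0^\infty (ky^{k-1}-ty^k)e^{-ty}\,\bar F_{X_{11}^2}(y)\,dy,$$
insert the regular variation $\bar F_{X_{11}^2}(y)=L(\sqrt y)y^{-\alpha/2}(1+o(1))$, substitute $u=ty$, and use uniform convergence of slowly varying functions ($L(\sqrt{u/t})\sim L(1/\sqrt t)$ on compacts) to obtain the Karamata-type formula
$$\phi_k(t)=\tfrac{\alpha}{2}\,\Gamma(k-\alpha/2)\,t^{-(k-\alpha/2)}L(1/\sqrt t)\,(1+o(1)),\qquad t\to 0^+,$$
where the algebraic identity $k\Gamma(k-\alpha/2)-\Gamma(k+1-\alpha/2)=(\alpha/2)\Gamma(k-\alpha/2)$ fixes the constant. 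Multiplying the pointwise limits of the integrand, the prefactor $n^{a}/L(\sqrt n)^{b}$ with $a=N_1(1-\alpha/2)+r\alpha/2$ and $b=r-N_1$ is exactly what is needed so that the scaled integrand converges to $C\,s^{a-1}e^{-s}$ with $C=(\alpha/2)^{r-N_1}\prod_{i:k_i\ge 2}\Gamma(k_i-\alpha/2)$; the remaining integral $\int_0^\infty s^{a-1}e^{-s}\,ds=\Gamma(a)$ delivers the claimed formula and, specialising to $r=1$, equation \eqref{highestmoment24}.

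The main obstacle is justifying dominated convergence. Near $s=0$ one uses Potter's inequality for $L$, $L(\sqrt{n/s})/L(\sqrt n)\le C\max(s^{\delta},s^{-\delta})$, together with a uniform Karamata bound $\phi_k(t)\le C\, t^{-(k-\alpha/2)}L(1/\sqrt t)$ valid for $t\le t_0$; this produces an envelope $\lesssim s^{a-1-\delta b}$ that is integrable since $a>0$. For large $s$ one combines the elementary bound $\phi_k(t)\le C_k t^{-k}$ (from $y^k e^{-ty/2}\le(2k/t)^k e^{-k}$) with a bound of the form $\phi(t)\le\exp(-c\min(t,1))$, derived from $1-\phi(t)\ge c\min(t,1)$; splitting the outer integral at $s=n$ then gives an exponentially decaying envelope on $[0,n]$ and an $O(\rho^n)$ contribution on $[n,\infty)$ which can be absorbed. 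Once the dominating function is in place, the pointwise limits assemble into the right-hand side of \eqref{moment24}.
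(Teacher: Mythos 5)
Your proposal is correct and follows essentially the same route as the paper: the paper likewise uses the Gamma-integral representation $\beta_{2k_1,\ldots,2k_r}=\frac{1}{\Gamma(K)}\int_0^\infty t^{K-1}\prod_i\phi_{k_i}(t)\,\phi(t)^{n-r}\,dt$ (quoted from Albrecher--Teugels, with $\phi_{k}=(-1)^{k}\varphi^{(k)}$), the limits $\varphi^{n-r}(t/n)\to e^{-t}$ and $(-1)^m\varphi^{(m)}(s)\sim(\alpha/2)\Gamma(m-\alpha/2)s^{\alpha/2-m}L(s^{-1/2})$ for $m>\alpha/2$, and Potter's bounds plus dominated convergence to pass to the limit. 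The only difference is that you re-derive the integral formula and the Karamata asymptotics from scratch and spell out the dominating function, whereas the paper cites the references for these steps.
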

\begin{proof}
We remark that \eqref{moment24} was proved in \cite{albrecher:teugels:2007} for $N_1=0$, that is $k_i\ge 2$. For the general case let $\beta=\alpha/2$, $X\eid X_{11}$ and consider $r\ge 1$, $k_1+\cdots+k_r=k\ge 1$ with $k_i\ge 1$. From Albrecher and Teugels \cite{albrecher:teugels:2007}, page 7, we have
\begin{equation}\label{eq:formulagine}
\E[ Y_{11}^{2k_1}\cdots Y_{1r}^{2k_r}]= \frac{(-1)^k}{n \Gamma(k)} \int_0^{\infty} \Big( \tfrac{t}{n}\Big)^{k-1} \varphi^{n-r}\Big( \tfrac{t}{n}\Big) \prod_{i=1}^r \varphi^{(k_i)}\Big( \tfrac{t}{n}\Big) \dint t\,,
\end{equation}
where $\varphi(s)=\E[\e^{-sX^2}]$, $s>0$, and $\varphi^{(m)}(s)=\frac{\dint^m}{\dint s^m} \varphi(s)$.
By \cite{albrecher:teugels:2007}, we have 
\begin{equation}\label{lim1}
\lim_{\nto} \varphi^{n-r}\Big( \tfrac{t}{n}\Big)=\e^{-t}\,, \qquad t>0\,,
\end{equation}
and that the asymptotic behavior of $\varphi^{(m)}(s)$, $m\in \N$, at the origin is given by
\begin{eqnarray}\label{eq:asyphi}
(-1)^m \varphi^{(m)}(s) \sim
\left\{\begin{array}{ll}
\beta \Gamma(m-\beta) s^{\beta -m} L(s^{-1/2}) \,, & \mbox{if } m>\beta, \\
\E[X^{2m}] \,, & \mbox{if }  m\le \beta,
\end{array}\right. \qquad s \downarrow 0\,.
\end{eqnarray}

By \eqref{eq:formulagine}, Potter's theorem and the dominated convergence theorem (for more details see \cite{albrecher:teugels:2007} or \cite{fuchs:joffe:teugels:2001}), we obtain in view of \eqref{lim1} and \eqref{eq:asyphi} that, as $\nto$,
\begin{equation*}
\begin{split}
\E[ Y_{11}^{2k_1}\cdots Y_{1r}^{2k_r}]&= \frac{(-1)^k}{n \Gamma(k)} \int_0^{\infty} \Big( \tfrac{t}{n}\Big)^{k-1} \varphi^{n-r}\Big( \tfrac{t}{n}\Big) \Big(\varphi^{(1)}\big( \tfrac{t}{n}\big)\Big)^{N_1} \prod_{i:k_1\ge 2} \varphi^{(k_i)}\Big( \tfrac{t}{n}\Big) \dint t\\
&\sim \frac{1}{n \Gamma(k)} \int_0^{\infty} \Big( \tfrac{t}{n}\Big)^{k-1} \e^{-t} \Big(\E[X^{2}]\Big)^{N_1} \prod_{i:k_i\ge 2} \beta \Gamma(k_i-\beta) \big( \tfrac{t}{n}\big)^{\beta -k_i} \underbrace{L\Big(\big( \tfrac{t}{n}\big)^{-1/2}\Big)}_{\sim L(n^{1/2})} \dint t\\
&\sim \Big(\prod_{i:k_i\ge 2} \Gamma(k_i-\beta) \Big) \frac{\beta^{r-N_1}L^{r-N_1}(n^{1/2})}{n^{N_1(1-\beta)+\beta r} \Gamma(k)} \int_0^{\infty} \e^{-t} t^{N_1(1-\beta)+\beta r-1} \dint t\\
&= \frac{L^{r-N_1}(n^{1/2})}{n^{N_1(1-\beta)+\beta r}} \frac{\beta^{r-N_1}\Gamma(N_1(1-\beta)+\beta r) \, \prod_{i:k_i\ge 2} \Gamma(k_i-\beta)}{\Gamma(k)}\,.
\end{split}
\end{equation*}
Rearranging yields \eqref{moment24}.
\end{proof}

\begin{remark}{\em
We mention that \eqref{moment24} per se does not tell us the speed of convergence of the \lhs~to the limit. For example, by  \eqref{moment24} we (only) know that $n(n-1)\beta_{2,2}\sim 1$, as $\nto$. Using the first identity in \eqref{eq1}, we deduce that
$$1-n(n-1)\beta_{2,2}=n \beta_{4}\sim n^{1-\alpha/2}L(n^{1/2}) (\alpha/2)\, \Gamma(\alpha/2)\Gamma(2-\alpha/2)\,,\quad \nto\,,$$
where \eqref{highestmoment24} was used in the last step. Thus, for certain cases, Lemma~\ref{lem:moment24} in conjunction with Lemma~\ref{lem:betas} reveal the speed of convergence in \eqref{moment24}.
}\end{remark}

\subsection{Proof of Theorem~\ref{thm:main}}
With some matrix algebra, Wang et al.~\cite[p.~85-86]{WangHanPan2018} derived for the log determinant of the sample covariance matrix $\bfS=n^{-1}\X\X^{\top}$ that
\begin{equation}\label{eq:segtse}
\log \det \bfS= -p \log n + \log((n(n-1)\cdots (n-p+1))+ \sum_{i=0}^{p-1} \log(1+Z_{i+1})\,,
\end{equation}
where
\begin{equation*}
Z_{i+1}=\frac{b_{i+1}^{\top} P_i b_{i+1} -(n-i)}{n-i}\quad \text{ and } \quad P_i=\bfI_n-B_{(i)}^{\top} (B_{(i)} B_{(i)}^\top)^{-1} B_{(i)}\,.
\end{equation*}
Here $P_0=\bfI_n$, $B_{(i)}=(b_1,\ldots, b_i)^{\top}$, and $b_i=(x_{i1}, \ldots,x_{in})^{\top}$ denotes the $i$th row of the matrix $\X$, $i=1,\ldots, p-1$.

Analogously to \eqref{eq:segtse}, we get for the log determinant of the sample correlation matrix $\bfR=\Y\Y^{\top}$ that
\begin{equation}\label{eq:segtse1}
\log \det \bfR= \underbrace{-p \log n + \log(n(n-1)\cdots (n-p+1))}_{:=c_n}+ \sum_{i=0}^{p-1} \log(1+\wt Z_{i+1})\,,
\end{equation}
where
\begin{equation*}
\wt Z_{i+1}=\frac{n\,\wt b_{i+1}^{\top} \wt P_i \wt b_{i+1} -(n-i)}{n-i}\quad \text{ and } \quad \wt P_i=\bfI_n-\wt B_{(i)}^{\top} (\wt B_{(i)} \wt B_{(i)}^\top)^{-1} \wt B_{(i)}\,.
\end{equation*}
Here $\wt P_0=\bfI_n$, $\wt B_{(i)}=(\wt b_1,\ldots, \wt b_i)^{\top}$, and $\wt b_i=(Y_{i1}, \ldots,Y_{in})^{\top}$ denotes the $i$th row of the matrix $\Y$. 
An important observation is that
$$\wt P_i=P_i =\bfI_n-B_{(i)}^{\top} (B_{(i)} B_{(i)}^\top)^{-1} B_{(i)}$$
is the same projection matrix as in the sample covariance case. Moreover, due to \cite[Proposition 2.1]{WangHanPan2018} all matrices $B_{(i)} B_{(i)}^\top$ are invertible with overwhelming probability.

We note that $P_i=P_i^2$ and $\tr(P_i)=n-i$, and define $$Q_i=(q_{i,kl})=P_i/(n-i)\,, \qquad 0\le i\le p-1\,.$$
By \cite[Lemma 2.1]{mohammadi:2016} and \cite[Lemma 3.1]{mohammadi:2016}, we have for $0\le i\le p-1$ and $1\le k,l\le n$ that
\begin{equation}\label{eq:boundelements}
0\le q_{i,kk} \le \frac{1}{n-i} \quad \text{and} \quad -\frac{1}{2(n-i)} \le q_{i,kl}\le \frac{1}{2(n-i)}\,.
\end{equation}
It is convenient to decompose $\wt Z_{i+1} $ as follows,
\begin{equation}\label{eq:decZ}
\wt Z_{i+1}  =\sum_{j=1}^n q_{i,jj} (nY_{i+1,j}^2 -1)+ \sum_{k\neq l} q_{i,kl} \,nY_{i+1,k} Y_{i+1,l}=:U_{i+1}+V_{i+1}\,,\qquad 0\le i\le p-1.
\end{equation}
The following result is the key ingredient; it will be proved in Section \ref{sec:proofofprop:important}.
\begin{proposition}\label{prop:important}
In the setting of Theorem~\ref{thm:main}, if $\alpha\in (2,4)$\footnote{We emphasize that some parts of our proof also work for $\alpha>2$, which is the widest range of the tail parameter $\alpha$ for which the CLT for the log-determinant might hold. This is due to fact that for $\alpha\in (0,2)$ the limiting spectral distribution of the sample correlation matrix $\bfR$ is no longer the classical \MP law but the so-called $\alpha$-heavy \MP law; see \cite{heiny:yao:2021} for details.}, we have for any $\vep\in(0,\alpha/2-1)$ that
\begin{equation}\label{eq:fourthV}
\lim_{\nto} n^{\vep} \sum_{i=0}^{p-1} \E[V_{i+1}^4] =0\,.
\end{equation}
Moreover, if $\alpha\in (3,4)$, there exists an $\vep>0$ such that
\begin{equation}\label{eq:fourthU}
\lim_{\nto} n^{\vep} \sum_{i=0}^{p-1} \E[U_{i+1}^4] =0\,.
\end{equation}
\end{proposition}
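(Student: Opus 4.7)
The plan is to handle the off-diagonal part $V_{i+1}$ by direct combinatorial expansion, and the diagonal part $U_{i+1}$ by applying Corollary~\ref{cor:fourthmoment} together with the cancellation identity $K+K_{4,4}+K_{2,2}+K_{2,2,2}+K_{2,2,2,2}=0$ of Theorem~\ref{thm:fourthmoment}.

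For $V_{i+1}$, I would write $W=(w_{kl})$ with $w_{kl}=nq_{i,kl}$ for $k\ne l$ and $w_{kk}=0$, so that $V_{i+1}=Y^\top WY$ with $Y=(Y_{i+1,k})_{k=1}^n$. Expanding $\E[V_{i+1}^4]$ as a sum over four ordered index pairs and exploiting the symmetry $Y_{i+1,k}\eid -Y_{i+1,k}$ kills every tuple in which some index appears with odd multiplicity; the surviving contributions correspond to closed four-edge multigraphs whose vertices all have even degrees, partitioned into three cases according to the number $N\in\{2,3,4\}$ of distinct vertices. Using the deterministic bounds $|w_{kl}|\le n/(2(n-i))=O(1)$, $\tr(W^2)=n^2[(n-i)^{-1}-S_2]=O(n)$ (from $\tr(Q_i^2)=1/(n-i)$), and $\tr(W^4)\le(\tr W^2)^2=O(n^2)$, together with the asymptotics $\beta_{4,4}=O(n^{-\alpha}L^2(\sqrt{n}))$, $\beta_{4,2,2}=O(n^{-2-\alpha/2}L(\sqrt{n}))$ and $\beta_{2,2,2,2}=O(n^{-4})$ from Lemma~\ref{lem:moment24}, the three cases contribute $O(n^{1-\alpha}L^2(\sqrt{n}))$, $O(n^{-1-\alpha/2}L(\sqrt{n}))$ and $O(n^{-2})$ respectively. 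Summing over $i\le p-1$ and multiplying by $n^\vep$ yields $O(n^{\vep+2-\alpha}L^2(\sqrt{n})+n^{\vep-1})$, which tends to zero for $\vep\in(0,\alpha/2-1)\subset(0,\alpha-2)$, establishing~\eqref{eq:fourthV}.

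For $U_{i+1}$, I would condition on $P_i$ and invoke Corollary~\ref{cor:fourthmoment} with $a_k=q_{i,kk}$ (legitimate since $\sum_k q_{i,kk}=\tr Q_i=1$). The key step is to use the cancellation identity to rewrite the exact conditional fourth moment as
\begin{equation*}
\E[U_{i+1}^4\mid P_i]=K_{4,4}A_{4,4}+K_{2,2}A_{2,2}+K_{2,2,2}A_{2,2,2}+K_{2,2,2,2}A_{2,2,2,2},
\end{equation*}
where $A_{4,4}:=n^4\beta_{4,4}-1$ and similarly for the others. Lemmas~\ref{lem:moment24} and~\ref{lem:betas} (the latter used to subtract off the dominant contributions from $\beta_{2,2},\beta_{2,2,2},\beta_{2,2,2,2}$) give $A_{4,4}=O(n^{4-\alpha}L^2(\sqrt{n}))$ and $A_{2,2},A_{2,2,2},A_{2,2,2,2}=O(n^{1-\alpha/2}L(\sqrt{n}))$. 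Deterministically, $|K_{4,4}|=O(n^{-2})$ from $S_j\le(n-i)^{1-j}$, so $K_{4,4}A_{4,4}=O(n^{2-\alpha}L^2(\sqrt{n}))$; the other three $K$-coefficients vanish identically when $q_{i,kk}\equiv 1/n$, and setting $\delta_\ell:=n^{\ell-1}S_\ell-1\ge 0$ one finds $K_{2,2}=-12\delta_2+8\delta_3-2\delta_4$ with analogous linear expressions (plus $O(1/n)$ remainders) for $K_{2,2,2}$ and $K_{2,2,2,2}$.

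The hard part will be producing an expectation bound of the form $\E[\delta_\ell]=o(n^{-c})$ for some $c>2-\alpha/2$; such a bound would yield $\sum_{i=0}^{p-1}\E[U_{i+1}^4]=O(n^{3-\alpha}L^2(\sqrt{n})+n^{2-c-\alpha/2}L(\sqrt{n}))=o(n^{-\vep})$ for a small $\vep>0$ when $\alpha\in(3,4)$, completing~\eqref{eq:fourthU}. Since $q_{i,11}=(1-h_{11})/(n-i)$ with $h_{11}=(B_{(i)}e_1)^\top(B_{(i)}B_{(i)}^\top)^{-1}(B_{(i)}e_1)$ the leverage score of the first column of $B_{(i)}$ (having mean $i/n$ by exchangeability and $\tr P_i=n-i$), this is essentially a concentration estimate for the leverage scores. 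The assumptions $\alpha>3$ (finite third absolute moment) and $X_{11}\eid -X_{11}$ play essential roles here: the former keeps $B_{(i)}B_{(i)}^\top/n$ close to $I_i$ in the moments needed, while the latter kills odd cross terms in the expansion of $\E[h_{11}^\ell]$. Supplying this estimate, together with the resulting bookkeeping, is the main technical task the proof must carry out.
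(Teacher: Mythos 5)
Your treatment of the off-diagonal part is correct and takes a genuinely more elementary route than the paper: where you classify the surviving multigraphs by their number $N\in\{2,3,4\}$ of distinct vertices and bound each class directly with $|w_{kl}|=O(1)$, $\tr(W^2)=O(n)$, $\tr(W^4)\le(\tr W^2)^2$ and the asymptotics of $\beta_{4,4},\beta_{4,2,2},\beta_{2,2,2,2}$, the paper proves a standalone bound $\E[V_{i+1}^4]\le Cn^{-\alpha/2+\vep}$ (Proposition~\ref{prop:offdiagbound}) by comparing the coefficient sums $\sum_{\bfk\in\calK_{r,s}}q_{i,\bfk}$ with the moments of a Rademacher quadratic form and invoking the Erd\H{o}s--Yau inequality (Lemma~\ref{lem:qmoment}). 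For $s=4$ your hand count closes and gives the same order ($O(n^{\vep+2-\alpha}L^2(\sqrt n)+n^{\vep-\alpha/2}L(\sqrt n)+n^{\vep-1})$, all $o(1)$ for $\vep<\alpha/2-1$); the paper's version is organized to scale to general $s$. Your diagonal part is structurally identical to the paper's: conditioning on $\mathcal F_i$, applying Corollary~\ref{cor:fourthmoment} with $a_k=q_{i,kk}$, using the cancellation identity to replace each $n^{\#}\beta_\bullet$ by $n^{\#}\beta_\bullet-1$, killing the $K_{4,4}$ term deterministically, and reducing the rest to the centered power sums $\delta_\ell=n^{\ell-1}S_\ell^{(i)}-1$ (your exact identity $K_{2,2}=-12\delta_2+8\delta_3-2\delta_4$ is precisely the rewriting in Lemma~\ref{lem:nestor3}).

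However, there is a genuine gap, and you have named it yourself: the estimate $\E[\delta_\ell]=o(n^{-c})$ with $c>2-\alpha/2$ (equivalently, $n^{k-2}\sum_{i=0}^{p-1}\sum_{\ell}(\E[q_{i,\ell\ell}^k]-n^{-k})=O(n^{-1/2})$, which is the paper's Lemma~\ref{lem:yaskov}) is asserted as "the main technical task" but not carried out. This is not a routine bookkeeping step: it is the longest and most delicate argument in the paper, requiring a resolvent regularization $({B}_{(i)}{B}_{(i)}^\top+\epsilon_nn\bfI_i)^{-1}$, the Sherman--Morrison identity, a martingale-difference decomposition of the trace, truncation of the entries at level $\zeta_n\sqrt n$ (where symmetry is needed so the truncated variables stay centered, and $\E|X_{11}|^3<\infty$ controls the truncation error and the truncated fourth moment $\hat\nu_4$), and the Bai--Silverstein quadratic-form variance bound. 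Without this estimate the claim \eqref{eq:fourthU} is not established, since the crude deterministic bound $|\delta_\ell|=O(1)$ only yields $\sum_i\E[K_{2,2}^{(i)}(n^2\beta_{2,2}-1)]=O(n^{2-\alpha/2}L(\sqrt n))$, which diverges. So the proposal correctly identifies the architecture of the proof and the role of the hypotheses, but the decisive concentration lemma for the leverage scores remains to be supplied.
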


By Taylor's theorem, we get
\begin{equation}\label{eq:taylornew}
\sum_{i=0}^{p-1} \log(1+\wt Z_{i+1})= \sum_{i=0}^{p-1} (\wt Z_{i+1}-\frac{\wt Z_{i+1}^2}{2}) +\sum_{i=0}^{p-1} R_{i+1}\,,
\end{equation}
where the remainder in Lagrange form is given by
\begin{equation}\label{eq:Ri+1}
R_{i+1}=\frac{1}{3} \Big(\frac{\wt Z_{i+1}}{1+\theta \wt Z_{i+1}} \Big)^3 \quad \text{ for some } \theta=\theta(\wt Z_{i+1})\in (0,1)\,.
\end{equation}
This expansion is justified by 
\begin{equation}\label{eq:maxZ}
\max_{i=0,\ldots,p-1} |\wt Z_{i+1}| \cip 0\,, \qquad \nto\,,
\end{equation}
which is an immediate consequence of the following lemma.
\begin{lemma}\label{lem:4.2}
Under the conditions of Theorem~\ref{thm:main}, we have for any $\vep >0$ that
$$\lim_{\nto} \sum_{i=0}^{p-1} \P(|\wt Z_{i+1}|>\vep) = 0\,.$$
\end{lemma}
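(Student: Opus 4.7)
The plan is to bound the tail probabilities by fourth moments via Markov's inequality and then invoke Proposition~\ref{prop:important} directly. First, I would apply Markov's inequality at the fourth power to obtain
\[
\P(|\wt Z_{i+1}|>\vep) \le \vep^{-4}\, \E[\wt Z_{i+1}^4]
\]
for any fixed $\vep>0$. Since $\wt Z_{i+1}=U_{i+1}+V_{i+1}$ by the decomposition \eqref{eq:decZ}, the elementary inequality $(a+b)^4\le 8(a^4+b^4)$ gives
\[
\E[\wt Z_{i+1}^4] \le 8\bigl(\E[U_{i+1}^4]+\E[V_{i+1}^4]\bigr).
\]

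Summing over $i=0,\ldots,p-1$ then yields
\[
\sum_{i=0}^{p-1}\P(|\wt Z_{i+1}|>\vep) \le \frac{8}{\vep^4}\Bigl(\sum_{i=0}^{p-1}\E[U_{i+1}^4]+\sum_{i=0}^{p-1}\E[V_{i+1}^4]\Bigr).
\]
Since $\alpha\in(3,4)$, Proposition~\ref{prop:important} provides an $\vep'>0$ such that $n^{\vep'}\sum_{i=0}^{p-1}\E[U_{i+1}^4]\to 0$ and (choosing $\vep'$ smaller than $\alpha/2-1$ if necessary) also $n^{\vep'}\sum_{i=0}^{p-1}\E[V_{i+1}^4]\to 0$. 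In particular, both sums are $o(1)$ as $\nto$, and the claim follows. No step of this argument presents a real obstacle; the entire content of the lemma has been absorbed into Proposition~\ref{prop:important}, whose proof (deferred to Section~\ref{sec:proofofprop:important}) will be the genuine work. I would keep the exposition to a few lines, emphasizing only the reduction to the fourth-moment bounds already established.
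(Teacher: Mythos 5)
Your proposal is correct and follows essentially the same route as the paper: Markov's inequality at the fourth power, the bound $(a+b)^4\le 8(a^4+b^4)$ applied to the decomposition $\wt Z_{i+1}=U_{i+1}+V_{i+1}$, and then Proposition~\ref{prop:important}. In fact your write-up is slightly cleaner than the paper's display \eqref{eq:sumz4}, which contains two typos (an $\vep$ that should be $\vep^4$ and a repeated $\E[U_{i+1}^4]$ where $\E[V_{i+1}^4]$ is meant).
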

\begin{proof}
Using Markov's inequality, $|a+b|^4\le 2^{3} (|a|^4+|b|^4)$ for $a,b\in \R$, and Proposition~\ref{prop:important}, we get for $\vep>0$,
\begin{equation}\label{eq:sumz4}
\sum_{i=0}^{p-1} \P(|\wt Z_{i+1}|>\vep)\le \sum_{i=0}^{p-1}\frac{\E[\wt Z_{i+1}^4]}{\vep}\le \frac{8}{\vep} \sum_{i=0}^{p-1} (\E[U_{i+1}^4]+ \E[U_{i+1}^4])\to 0 \,, \qquad \nto\,.
\end{equation}
\end{proof}
Let $\mathcal{F}_k=\mathcal{F}_k^{(n)}$ be the sigma algebra generated by the first $k$ rows of $\X$.
We have
\begin{equation}\label{eq:decomposecor}
\begin{split}
\sum_{i=0}^{p-1} (\wt Z_{i+1}-\frac{\wt Z_{i+1}^2}{2})&=\sum_{i=0}^{p-1} \wt Z_{i+1}-\sum_{i=0}^{p-1} \underbrace{\tfrac{1}{2} (\wt Z_{i+1}^2-\E[\wt Z_{i+1}^2 | \mathcal{F}_i])}_{:=\wt Y_{i+1}} - \sum_{i=0}^{p-1}\tfrac{1}{2}\E[\wt Z_{i+1}^2 | \mathcal{F}_i]\,.
\end{split}
\end{equation}
 Define $\mu_n=(p-n+\frac{1}{2})\log(1-\frac{p}{n})-p +\frac{p}{n}$, (which is the centering sequence in the CLT).

In view of \eqref{eq:segtse1} and \eqref{eq:taylornew}, one gets
\begin{equation}\label{eq:dddd}
\log \det \bfR -\mu_n = 
\sum_{i=0}^{p-1} \wt Z_{i+1}-\sum_{i=0}^{p-1} \wt Y_{i+1}+\sum_{i=0}^{p-1} R_{i+1}
  - \sum_{i=0}^{p-1}\tfrac{1}{2}\E[\wt Z_{i+1}^2 | \mathcal{F}_i]+c_n-\mu_n\,.
\end{equation}
By virtue of \eqref{eq:dddd} and noting that $-2\log(1-p/n)- 2 p/n\to -2\log(1-\gamma)-2\gamma$, Theorem~\ref{thm:main} follows from the next four limit relations by an application of the Slutsky lemma,

\begin{align}
\frac{1}{\sqrt{-2\log(1-p/n)- 2 p/n}} \sum_{i=0}^{p-1} \wt Z_{i+1} &\cid N(0,1)\,, \label{sumZ}\\
\sum_{i=0}^{p-1} \wt Y_{i+1}&\cip 0\,, \label{sumY_i}\\
\sum_{i=0}^{p-1} R_{i+1}&\cip 0\,, \label{sumR_i}\\
\sum_{i=0}^{p-1} \tfrac{1}{2}\E[\wt Z_{i+1}^2 | \mathcal{F}_i]-c_n+\mu_n&\cip 0\,. \label{sumconst}
\end{align}
Equations \eqref{sumZ}, \eqref{sumY_i}, \eqref{sumR_i}, \eqref{sumconst} are proved in Sections \ref{sec:sumZ}, \ref{sec:sumY_i}, \ref{sec:sumR_i} and \ref{sec:sumconst}, respectively. This completes the proof of Theorem \ref{thm:main}.

\subsection{Proof of \eqref{sumZ}}\label{sec:sumZ}

We will use the following CLT for martingale differences.
\begin{lemma}[e.g. Hall and Heyde \cite{hall:heyde:1980}]\label{lem:martingaleclt}
Let $\{S_{ni},\mathcal{F}_{ni}, 1\le i\le k_n, n\ge 1\}$ be a zero-mean, square integrable martingale array with differences $Z_{ni}$. Suppose that $\E[\max_i Z_{ni}^2]$ is bounded in $n$ and that
 \begin{equation*}
\max_i |Z_{ni}|\cip 0 \quad \text{ and } \quad \sum_i Z_{ni}^2 \cip 1\,.
\end{equation*}
Then we have $S_{nk_n}\cid N(0,1)$.
\end{lemma}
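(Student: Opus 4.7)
The plan is to prove this martingale CLT by the method of characteristic functions, following the classical route of Brown--Eagleson and Hall--Heyde: one reduces the product of conditional characteristic functions to a Gaussian exponential and then invokes L\'evy's continuity theorem.

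The first step would be a truncation to bounded differences. Writing $\E_{i-1}[\,\cdot\,] := \E[\,\cdot\,|\,\mathcal{F}_{n,i-1}]$ for brevity, use $\max_i |Z_{ni}| \cip 0$ to pick $\delta_n \downarrow 0$ slowly enough that $\P(\max_i |Z_{ni}| > \delta_n) \to 0$, and set
\begin{equation*}
Z_{ni}^\star := Z_{ni}\1_{|Z_{ni}|\le \delta_n} - \E_{i-1}[Z_{ni}\1_{|Z_{ni}|\le \delta_n}].
\end{equation*}
This preserves the martingale-difference structure and enforces $|Z_{ni}^\star| \le 2\delta_n$; moreover, because $\sup_n \E[\max_i Z_{ni}^2] < \infty$ supplies uniform integrability of $\max_i Z_{ni}^2$, one checks that $\sum_i (Z_{ni} - Z_{ni}^\star)$ vanishes in $L^2$ while $\sum_i (Z_{ni}^\star)^2 \cip 1$ is retained.

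With $|Z_{ni}^\star| \le 2\delta_n$ in hand, the Taylor expansion $e^{ix} = 1 + ix - x^2/2 + O(|x|^3)$, together with the martingale identity $\E_{i-1}[Z_{ni}^\star] = 0$, yields for each $t \in \R$,
\begin{equation*}
\E_{i-1}[e^{it Z_{ni}^\star}] = 1 - \tfrac{t^2}{2}\, \E_{i-1}[(Z_{ni}^\star)^2] + \rho_{ni}(t),
\end{equation*}
with $|\rho_{ni}(t)| \le C|t|^3 \delta_n\, \E_{i-1}[(Z_{ni}^\star)^2]$. Taking logarithms of the telescoping product---legitimate since each factor is close to $1$ once $\delta_n$ is small---gives
\begin{equation*}
\prod_{i=1}^{k_n} \E_{i-1}[e^{it Z_{ni}^\star}] = \exp\Big(-\tfrac{t^2}{2}\sum_{i=1}^{k_n} \E_{i-1}[(Z_{ni}^\star)^2] + o_P(1)\Big).
\end{equation*}
A standard second-moment computation on the martingale-difference sequence $(Z_{ni}^\star)^2 - \E_{i-1}[(Z_{ni}^\star)^2]$ shows that $\sum_i \E_{i-1}[(Z_{ni}^\star)^2] \cip 1$ as well, so the right-hand side converges in probability to $e^{-t^2/2}$.

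The proof then closes by iterated conditioning, $\E[\exp(it \sum_i Z_{ni}^\star)] = \E[\prod_i \E_{i-1}[e^{it Z_{ni}^\star}]]$, followed by bounded convergence to conclude $\E[\exp(it \sum_i Z_{ni}^\star)] \to e^{-t^2/2}$; L\'evy's continuity theorem, combined with the $L^2$ truncation control from the first step, then delivers $S_{nk_n} \cid N(0,1)$. The main obstacle in this plan is the truncation: one must simultaneously make the differences small, preserve the martingale structure (through the conditional recentring), and avoid perturbing the quadratic-variation limit, and it is precisely the uniform second-moment hypothesis $\sup_n \E[\max_i Z_{ni}^2] < \infty$ that enables the $L^2$ control of the recentring term to work.
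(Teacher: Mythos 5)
The paper offers no proof of this lemma at all --- it is quoted directly from Hall and Heyde --- so your attempt can only be judged against the standard proof of that classical result. Your overall architecture (truncate the differences, Taylor-expand conditional characteristic functions, feed in the convergence of the quadratic variation) is the right one, but the step that is supposed to close the argument is not correct. The identity $\E[\exp(it\sum_i Z^\star_{ni})]=\E[\prod_i\E_{i-1}[\e^{itZ^\star_{ni}}]]$ is false in general: iterated conditioning gives $\E[\e^{itS_n}]=\E\big[\e^{itS_{n,k_n-1}}\,\E_{k_n-1}[\e^{itZ_{nk_n}}]\big]$, and the inner conditional characteristic function is a genuine random variable that cannot be pulled outside the outer expectation, so the recursion does not produce the product of conditional characteristic functions. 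This is precisely the point where martingale CLTs need an extra device: either McLeish's product $T_n=\prod_i(1+itZ_{ni})$, for which $\E[T_n]=1$ holds exactly and $\e^{itS_n}=T_n\exp(-\tfrac{t^2}{2}\sum_iZ_{ni}^2+o_{\P}(1))$, or a Lindeberg-type telescoping of $\E[\exp(itS_n+\tfrac{t^2}{2}\sum_i\E_{i-1}[Z_{ni}^2])]-1$ into a sum of conditionally small increments. Your proof needs to be rebuilt around one of these.

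A second, more technical gap concerns integrability. Boundedness of $\E[\max_iZ_{ni}^2]$ is $L^1$-boundedness, not uniform integrability, and the estimates you invoke --- that $\sum_i\E[Z_{ni}^2\1_{\{|Z_{ni}|>\delta_n\}}]\to0$ for the truncation error, and that $\sum_i\E[(Z^\star_{ni})^4]\le 4\delta_n^2\,\E[(\sum_iZ^\star_{ni})^2]\to0$ for the quadratic-variation replacement --- both require $\E[\sum_iZ_{ni}^2]$ to be bounded in $n$, which is not among the hypotheses. The standard remedy, absent from your sketch, is a preliminary stopping-time truncation $Z_{ni}\mapsto Z_{ni}\1_{\{\sum_{j<i}Z_{nj}^2\le 2\}}$: the indicator is $\mathcal{F}_{n,i-1}$-measurable so the martingale-difference structure survives, the modification is void with probability tending to one because $\sum_iZ_{ni}^2\cip1$, and afterwards $\sum_iZ_{ni}^2\le 2+\max_iZ_{ni}^2$ has bounded expectation --- which is where the hypothesis $\sup_n\E[\max_iZ_{ni}^2]<\infty$ actually does its work. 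With that device inserted and the characteristic-function step repaired, the argument can be completed along the lines you describe.
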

In view of $\E[\wt Z_{i+1} | \mathcal{F}_i]=0$, we observe that $(\wt Z_{i+1})_i$ is a martingale difference sequence with respect to the filtration $(\mathcal{F}_i)$.
We apply Lemma~\ref{lem:martingaleclt} to the martingale differences $\sigma_n \wt Z_{i+1}$ with $\sigma_n=(-2\log(1-p/n)- 2 p/n)^{-1/2}$.
From \eqref{eq:maxZ}, we have $\max_{i=0,\ldots,p-1} | \sigma_n \,\wt Z_{i+1}| \cip 0$ as $\nto$.
In order to check the other conditions in Lemma~\ref{lem:martingaleclt}, we need the following lemmas. The notation 
$S_j^{(i)}:=q_{i,11}^j+\cdots  +q_{i,nn}^j$, $j\ge 1$ will be useful.
\begin{lemma}\label{lem:secondmoment}
Assume that the distribution of $X_{11}$ is symmetric, i.e., $X_{11} \eid -X_{11}$. Then it holds for $0\le i\le p-1$ that
\begin{equation}\label{eq:s2}
 \E[U_{i+1}^2]
= \frac{1-n\E[S_2^{(i)}]}{n-1} (1-n^2 \beta_4)\,,
\end{equation}
\begin{equation}\label{eq:V2}
  \E[V_{i+1}^2]=2 n^2 \beta_{2,2} \,\Big(\frac{1}{n-i}-\E[S_2^{(i)}]\Big)\,.
\end{equation}
\end{lemma}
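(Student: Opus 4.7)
The plan is to exploit three structural facts throughout: first, the row $(Y_{i+1,1},\ldots,Y_{i+1,n})$ is independent of $\mathcal{F}_i$, and hence of $Q_i$; second, its coordinates are exchangeable with $\sum_j Y_{i+1,j}^2=1$, so $\beta_2=1/n$ and the moments $\beta_{2k_1,\ldots,2k_r}$ are well-defined; third, by the symmetry assumption $X_{11}\eid -X_{11}$, any product $\E[\,Y_{i+1,j_1}^{m_1}\cdots Y_{i+1,j_r}^{m_r}]$ containing an odd exponent vanishes. I will also use $\tr Q_i = 1$ (since $\tr P_i=n-i$) and the projection identity $\tr(Q_i^2) = \tr(P_i^2)/(n-i)^2 = 1/(n-i)$.

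For \eqref{eq:s2}, since $\sum_j q_{i,jj}=1$, write
\begin{equation*}
U_{i+1} \;=\; n\sum_{j=1}^n q_{i,jj}\Big(Y_{i+1,j}^2-\tfrac{1}{n}\Big).
\end{equation*}
Conditioning on $\mathcal{F}_i$ and expanding the square, the diagonal contribution produces $(\beta_4-n^{-2})$ weighted by $\sum_j q_{i,jj}^2 = S_2^{(i)}$, while the off-diagonal contribution produces $(\beta_{2,2}-n^{-2})$ weighted by $\sum_{j\neq l} q_{i,jj}q_{i,ll} = 1 - S_2^{(i)}$. Thus
\begin{equation*}
\E[U_{i+1}^2\mid \mathcal{F}_i] \;=\; n^2 S_2^{(i)}\beta_4 + n^2\bigl(1-S_2^{(i)}\bigr)\beta_{2,2} - 1.
\end{equation*}
Substituting $\beta_{2,2} = (1-n\beta_4)/[n(n-1)]$ from identity \eqref{id1} (equivalently the first relation in \eqref{eq1}) and taking unconditional expectation, a short algebraic manipulation collapses the right-hand side to $(1-n\E[S_2^{(i)}])(1-n^2\beta_4)/(n-1)$, which is \eqref{eq:s2}.

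For \eqref{eq:V2}, expand
\begin{equation*}
\E[V_{i+1}^2 \mid \mathcal{F}_i] \;=\; n^2 \sum_{k\neq l}\sum_{k'\neq l'} q_{i,kl}\,q_{i,k'l'}\,\E[Y_{i+1,k}Y_{i+1,l}Y_{i+1,k'}Y_{i+1,l'}].
\end{equation*}
By symmetry, the expectation vanishes unless every index appears with even multiplicity in $\{k,l,k',l'\}$, which, given $k\neq l$ and $k'\neq l'$, forces $\{k,l\}=\{k',l'\}$ as sets. The two matchings $(k,l)=(k',l')$ and $(k,l)=(l',k')$ each contribute $q_{i,kl}^2\,\beta_{2,2}$ (using symmetry of $P_i$), so the sum reduces to $2n^2\beta_{2,2}\sum_{k\neq l} q_{i,kl}^2$. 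Finally,
\begin{equation*}
\sum_{k\neq l} q_{i,kl}^2 \;=\; \tr(Q_i^2) - S_2^{(i)} \;=\; \frac{1}{n-i} - S_2^{(i)},
\end{equation*}
using $P_i^2=P_i$; taking expectation yields \eqref{eq:V2}.

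No step is really an obstacle here; the only mildly delicate point is recognizing that the symmetry hypothesis is what eliminates all terms in $\E[V_{i+1}^2\mid\mathcal{F}_i]$ except the matched pairs, and that the constraint $\sum Y_{i+1,j}^2=1$ (through Lemma~\ref{lem:betas}) is what allows the compact factored form $(1-n\E[S_2^{(i)}])(1-n^2\beta_4)$ in \eqref{eq:s2}.
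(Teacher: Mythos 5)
Your proof is correct and follows essentially the same route as the paper: compute $\E[U_{i+1}^2\mid\mathcal F_i]=n^2\beta_4 S_2^{(i)}+n^2\beta_{2,2}(1-S_2^{(i)})-1$ using $\tr Q_i=1$, eliminate $\beta_{2,2}$ via the identity $1=n\beta_4+n(n-1)\beta_{2,2}$, and for $V_{i+1}$ use symmetry to reduce to matched pairs together with $\sum_{k,l}q_{i,kl}^2=\tr(Q_i^2)=(n-i)^{-1}$. The only cosmetic difference is that you expand the centered square directly where the paper invokes its binomial-theorem identity with $s=2$; the computations are identical.
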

\begin{proof}
Let $0\le i\le p-1$. By the binomial theorem, we have  for $s\ge 1$,
\begin{equation}\label{eq:rhdhddff}
\begin{split}
\E& \Big[\Big( n\sum_{k=1}^n q_{i,kk} Y_{i+1,k}^2 -1\Big)^s \,\Big| \, \mathcal{F}_i \Big]=(-1)^s +s(-1)^{s-1}  +\sum_{t=2}^s \binom{s}{t} (-1)^{s-t} n^t \E \Big[ \Big(\sum_{k=1}^n q_{i,kk} Y_{i+1,k}^2 \Big)^t \,\Big| \, \mathcal{F}_i \Big]\,.
\end{split}
\end{equation}
A simple calculation using $\tr(Q_i)=1$ yields
\begin{equation}\label{eq:llff}
n^2 \E \Big[ \Big(\sum_{k=1}^n q_{i,kk} Y_{i+1,k}^2 \Big)^2 \,\Big| \, \mathcal{F}_i \Big]=n^2 \beta_4 S_2^{(i)} +n^2 \beta_{2,2} (1-S_2^{(i)})\,.
\end{equation}
Combining \eqref{eq1} and \eqref{eq:llff}, we obtain
\begin{equation*}
n^2 \E \Big[ \Big(\sum_{k=1}^n q_{i,kk} Y_{i+1,k}^2 \Big)^2 \,\Big| \, \mathcal{F}_i \Big]=n^2 \beta_4 \Big(S_2^{(i)} \Big(1+\frac{1}{n-1}\Big)- \frac{1}{n-1}\Big) + \frac{n}{n-1}(1-S_2^{(i)})\,.
\end{equation*}
In view of \eqref{eq:rhdhddff}, this establishes \eqref{eq:s2}.

By conditioning on $\mathcal{F}_i$ and using that $q_{i,kl}=q_{i,lk}$, one gets that
\begin{equation*}
  \E[V_{i+1}^2]=2 n^2 \beta_{2,2} \, \E\sum_{k\neq l} q_{i,kl}^2=2 n^2 \beta_{2,2} \,\Big(\frac{1}{n-i}-\E[S_2^{(i)}]\Big)\,,
\end{equation*}
where we used $\sum_{k,l} q_{i,kl}^2=(n-i)^{-1}$ in the last step.
\end{proof}

\begin{lemma}\label{lem:asymptoticvariance}
Under the assumptions of Theorem \ref{thm:main}, it holds that, as $\nto$,
\begin{equation*}
\sum_{i=0}^{p-1}\E[U_{i+1}^2]= O\big(n^{(3-\alpha)/2+\vep}\big) \qquad \text{and} \qquad 
\sum_{i=0}^{p-1}\E[V_{i+1}^2]\sim -2\log(1-\tfrac pn)- 2 \tfrac pn
\end{equation*}
for any $\vep >0$.
\end{lemma}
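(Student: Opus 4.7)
The strategy is to substitute the explicit identities from Lemma~\ref{lem:secondmoment} and reduce everything to the asymptotic behaviour of $n^2\beta_{2,2}$, $n^2\beta_4-1$, and the variance $\Var((P_i)_{11})$. Two preliminary observations simplify the task. First, since $\tr P_i=n-i$ and the diagonal entries of $P_i$ are exchangeable, $\E[(P_i)_{kk}]=(n-i)/n$, so
\[
\E[S_2^{(i)}]\;=\;\frac{n\,\E[(P_i)_{11}^2]}{(n-i)^2}\;=\;\frac{1}{n}+\frac{n\,\Var((P_i)_{11})}{(n-i)^2}\,,
\]
which reduces the problem to a sharp estimate of $\Var((P_i)_{11})$. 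Second, Lemma~\ref{lem:moment24} yields $n^2\beta_{2,2}\to 1$ (take $r=N_1=2$) and $\beta_4\sim(\alpha/2)\Gamma(\alpha/2)\Gamma(2-\alpha/2)\,n^{-\alpha/2}L(n^{1/2})$ (take $r=1,\,k_1=2$), whence $|n^2\beta_4-1|=O(n^{2-\alpha/2+\vep})$ for any $\vep>0$ by Potter's bounds on the slowly varying $L$.

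For the $V$-sum, \eqref{eq:V2} gives $\sum_{i=0}^{p-1}\E[V_{i+1}^2]=2n^2\beta_{2,2}\sum_{i=0}^{p-1}(1/(n-i)-\E[S_2^{(i)}])$. Plugging in the above identity, the inner sum splits into a main part $\sum_{i=0}^{p-1}i/(n(n-i))=-\log(1-p/n)-p/n+O(1/n)$ (by a standard integral comparison, since $p/n\to\gamma<1$) and a correction $-\sum_{i=0}^{p-1}n\Var((P_i)_{11})/(n-i)^2$. Combined with $n^2\beta_{2,2}\to 1$, the claimed asymptotic $\sim-2\log(1-p/n)-2p/n$ follows as soon as the correction is $o(1)$.

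For the $U$-sum, identity~\eqref{eq:s2} together with the reformulation of $\E[S_2^{(i)}]$ yields
\[
\sum_{i=0}^{p-1}\E[U_{i+1}^2] \;=\; \frac{n^2(n^2\beta_4-1)}{n-1}\sum_{i=0}^{p-1}\frac{\Var((P_i)_{11})}{(n-i)^2}\,.
\]
The prefactor is $O(n^{2-\alpha/2+\vep})$ by the first paragraph and $\sum_{i=0}^{p-1}(n-i)^{-2}=O(1/n)$, so reaching the claimed rate $O(n^{(3-\alpha)/2+\vep})$ reduces to the uniform estimate $\Var((P_i)_{11})=O(n^{-1/2+\vep})$. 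This single bound also controls the $V$-correction as $O(n^{1/2+\vep})\cdot O(1/n)=o(1)$, so both parts of the lemma rest on the same ingredient.

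The main obstacle is therefore this uniform variance bound in the heavy-tailed regime $\alpha\in(3,4)$, where $\E[X_{11}^4]=\infty$ invalidates the classical argument that would otherwise deliver the much stronger $\Var((P_i)_{11})=O(i/n^2)$. My plan is to start from the Sherman--Morrison identity $(P_i)_{11}=(1+T_i)^{-1}$ with the quadratic form $T_i=u^\top M^{-1}u$, where $u$ is the first column of $B_{(i)}$ and $M$ is the Gram matrix of the remaining columns, and then estimate $\Var((1+T_i)^{-1})$ by a truncation argument on the entries of $u$. The hypothesis $\alpha>3$ provides $\E|X_{11}|^3<\infty$, which is just enough integrability for the truncated portion of $T_i$ to concentrate around its mean $\sim i/n$ at a polynomial rate; the boundedness $(P_i)_{11}\in[0,1]$ then absorbs the contribution of the (rare) exceptional event. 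Combining these ingredients yields both conclusions of Lemma~\ref{lem:asymptoticvariance}.
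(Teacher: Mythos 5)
Your algebraic reductions are correct and essentially reproduce the paper's: you start from the identities of Lemma~\ref{lem:secondmoment}, use Lemma~\ref{lem:moment24} to get $n^2\beta_{2,2}\to 1$ and $n^2\beta_4=O(n^{2-\alpha/2+\vep})$, and observe that $1-n\E[S_2^{(i)}]=-n^2\Var((P_i)_{11})/(n-i)^2$ by exchangeability of the diagonal entries of $P_i$. (Minor slip: your ``prefactor'' $n^2(n^2\beta_4-1)/(n-1)$ is $O(n^{3-\alpha/2+\vep})$, not $O(n^{2-\alpha/2+\vep})$; with the correct count the product still comes out to $O(n^{(3-\alpha)/2+\vep})$, so the conclusion is unaffected.) The paper phrases the same reduction through the quantity $\sum_{i=0}^{p-1}\bigl(\E[S_2^{(i)}]-\tfrac 1n\bigr)$ rather than through $\Var((P_i)_{11})$, but these are the same object up to the factor $n/(n-i)^2$.

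The genuine gap is that the entire lemma now rests on the estimate $\Var((P_i)_{11})=O(n^{-1/2+\vep})$ (equivalently, the summed bound $\sum_{i=0}^{p-1}\bigl(\E[S_2^{(i)}]-\tfrac 1n\bigr)=O(n^{-1/2})$), and you only announce a ``plan'' for it. This is not a routine step: it is exactly the content of the paper's Lemma~\ref{lem:yaskov}, whose proof occupies several pages of the appendix and is the technical core of the whole argument in the regime $\E X_{11}^4=\infty$. Your outline (Sherman--Morrison to write $(P_i)_{11}=(1+T_i)^{-1}$, truncation of the entries of the first column, using $\E|X_{11}|^3<\infty$ and boundedness of $(P_i)_{11}$) does point in the right direction --- the paper does the same, but it additionally needs a regularization $B_{(i,1)}B_{(i,1)}^\top+\epsilon_n n\bfI_i$ to control the smallest eigenvalue, a martingale-difference decomposition of $\tr(\cdot)^{-1}$ around its mean, a truncation at level $\zeta_n\sqrt n$ whose error is controlled by the symmetry of $X_{11}$ (so that the truncated variables stay centered) together with the third-moment condition, and the Bai--Silverstein quadratic-form moment bound with the truncated fourth moment $\hat\nu_4\sim C\zeta_n^{4-\alpha}n^{(4-\alpha)/2}L(\zeta_n\sqrt n)$, followed by an optimization over $\epsilon_n$. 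None of these steps is carried out in your proposal, so as written it establishes the lemma only conditionally on an unproved concentration estimate. Also note a small imprecision in the sketch: the mean of $T_i=v_{1,i}^\top(B_{(i,1)}B_{(i,1)}^\top)^{-1}v_{1,i}$ is asymptotically $i/(n-i)$, not $i/n$, since $\E[(P_i)_{11}]=(n-i)/n$.
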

\begin{proof}
From Lemma~\ref{lem:secondmoment}, equation \eqref{highestmoment24} and an application of Lemma~\ref{lem:yaskov}, we get for any $\vep >0$,
\begin{align*}
\sum_{i=0}^{p-1}\E[U_{i+1}^2]&= \sum_{i=0}^{p-1} \frac{1-n\E[S_2^{(i)}]}{n-1} (1-n^2 \beta_4) \le n^2 \beta_4 \sum_{i=0}^{p-1} \frac{n\E[S_2^{(i)}]-1}{n-1}\\
&= n^2 \beta_4 \left( \frac{-p}{n-1}+ \frac{n}{n-1} \Big(\sum_{i=0}^{p-1}\E[S_2^{(i)}] - \frac{p}{n} \Big) +\frac{p}{n-1} \right)\\
&=n^{2-\alpha/2+\vep} O(n^{-1/2})= O\big(n^{(3-\alpha)/2+\vep}\big)\,.
\end{align*}
Again from Lemma~\ref{lem:secondmoment} and Lemma~\ref{lem:yaskov}, we get, as $\nto$,
\begin{align*}
\sum_{i=0}^{p-1}\E[V_{i+1}^2]&= 2 n^2 \beta_{2,2} \,\left(\sum_{i=0}^{p-1}\frac{1}{n-i}-\Big(\sum_{i=0}^{p-1}\E[S_2^{(i)}] - \frac{p}{n} \Big) - \frac{p}{n} \right)\\
&=2 \underbrace{n^2 \beta_{2,2}}_{\sim 1} \,\left(\sum_{i=0}^{p-1}\frac{1}{n-i}-O(n^{-1/2}) - \frac{p}{n} \right)\\
&\sim -2\log(1-p/n)- 2 p/n
\end{align*}
since $\sum_{i=0}^{p-1}\frac{1}{n-i} \sim -\log (1-p/n)$.
\end{proof}

Recalling the definition of $\sigma_n^2$ and using Lemma~\ref{lem:asymptoticvariance}, we see that
\begin{equation}\label{eq:variance}
\sigma_n^2 \E\Big[\max_{i=0,\ldots,p-1} \wt Z_{i+1}^2\Big]\le \sigma_n^2 \sum_{i=0}^{p-1} \E[\wt Z_{i+1}^2] = \sigma_n^2 \sum_{i=0}^{p-1} \Big( \E[U_{i+1}^2]+\E[V_{i+1}^2]\Big)=1+o(1)\,.
\end{equation}

Due to $\sigma_n^2 \sum_{i=0}^{p-1}\E[\wt Z_{i+1}^2]=1+o(1)$, the condition $\sigma_n^2 \sum_{i=0}^{p-1}\wt Z_{i+1}^2\cip 1$ is implied by 
\begin{equation}\label{eq:1212}
 \sum_{i=0}^{p-1}(\wt Z_{i+1}^2-\E[\wt Z_{i+1}^2\,| \mathcal{F}_i])\cip 0 \,,\qquad \nto\,,
\end{equation}
and 
\begin{equation}\label{eq:reess}
 \sum_{i=0}^{p-1}(\E[\wt Z_{i+1}^2\,| \mathcal{F}_i]-\E[\wt Z_{i+1}^2]) \cip 0 \,,\qquad \nto\,.
\end{equation}
Observe that \eqref{eq:1212} is equivalent to \eqref{sumY_i}. Hence, it remains to show \eqref{eq:reess}. To this end, recall that in Lemma \ref{lem:secondmoment} and its proof it was calculated that
\begin{align*}
\sum_{i=0}^{p-1}\Big(\E[\wt Z_{i+1}^2\,| \mathcal{F}_i]-\E[\wt Z_{i+1}^2]\Big)&=\sum_{i=0}^{p-1}\Big(\E[U_{i+1}^2 +V_{i+1}^2| \mathcal{F}_i]-\E[U_{i+1}^2 +V_{i+1}^2]\Big)\\
&=\sum_{i=0}^{p-1}\frac{n (S_2^{(i)}-\E[S_2^{(i)}])}{n-1} (1-n^2 \beta_4) +2 n^2 \beta_{2,2} \,\big(S_2^{(i)}-\E[S_2^{(i)}]\big)\\
&\sim (3-\underbrace{n^2 \beta_4}_{\le n^{2-\alpha/2 +\vep}}) \sum_{i=0}^{p-1}\big(S_2^{(i)}-\E[S_2^{(i)}]\big)\cip 0\,, \qquad \nto\,,
\end{align*}
where we used Lemma~\ref{lem:moment24} for the inequality in the last line, and Lemma~\ref{lem:yaskov} in the last step. Indeed, using Lemma \ref{lem:yaskov} for $k=2$ we obtain
\begin{eqnarray*}
  \sum_{i=0}^{p-1}\big(S_2^{(i)}-\E[S_2^{(i)}]\big) &=&\sum_{i=0}^{p-1}\big(S_2^{(i)}-\frac{1}{n}\big)+\sum_{i=0}^{p-1}\big(\E[S_2^{(i)}]-\frac{1}{n}\big)\\
&=&\underbrace{\sum_{i=0}^{p-1}\sum\limits_{\ell=1}^n\left(q^2_{i,\ell\ell}-\frac{1}{n^2}\right)}_{=O_{\P}(n^{-1/2}), ~\text{Markov and Lemma \ref{lem:yaskov}}} +~~~~ \underbrace{\sum_{i=0}^{p-1}\sum\limits_{\ell=1}^n\left(\E[q^2_{i,\ell\ell}]-\frac{1}{n^2}\right)}_{=O(n^{-1/2}), ~\text{Lemma \ref{lem:yaskov}}}  =O_{\P}(n^{-1/2}) \,,
\end{eqnarray*}
where for the first sum we have also used the fact that $0\leq\sum_{\ell=1}^n\left(q^2_{i,\ell\ell}-\frac{1}{n^2}\right)$ by \eqref{eq:sdfsdd}.

Thus, we have verified the conditions of Lemma~\ref{lem:martingaleclt} which now yields \eqref{sumZ} and finishes the proof.

\subsection{Proof of \eqref{sumY_i}}\label{sec:sumY_i}

By Markov's inequality, one has for $\vep>0$,
\begin{equation}\label{eq:boundY}
\P\Big(\Big|\sum_{i=0}^{p-1} \wt Y_{i+1}\Big|>\vep\Big)\le \vep^{-1} \E\Big[\Big(\sum_{i=0}^{p-1} \wt Y_{i+1}\Big)^2 \Big]\,.
\end{equation}
If $j\neq i$ one can show by conditioning on $\mathcal{F}_{\max(i,j)}$ that 
$\E[\wt Y_{i+1} \wt Y_{j+1}]=0$. This in conjunction with the inequality $(a+b)^2\le 2(a^2+b^2)$ gives
\begin{equation*}
\begin{split}
\E\Big[\Big(\sum_{i=0}^{p-1} \wt Y_{i+1}\Big)^2 \Big]&= \sum_{i=0}^{p-1} \E[\wt Y_{i+1}^2]= \frac14 \sum_{i=0}^{p-1} \E\Big[(\wt Z_{i+1}^2-\E[\wt Z_{i+1}^2 | \mathcal{F}_i])^2\Big]\\
&\le \underbrace{\frac12 \sum_{i=0}^{p-1}  \E[\wt Z_{i+1}^4] }_{=o(1) \text{ by \eqref{eq:sumz4}}}
+ \frac12 \sum_{i=0}^{p-1} \E[(\E[\wt Z_{i+1}^2 | \mathcal{F}_i])^2] \Big)\\
&= o(1)+\frac12 \sum_{i=0}^{p-1} \E\Big[\Big\{ \frac{1-nS_2^{(i)}}{n-1} (1-n^2 \beta_4)+\underbrace{2 n^2 \beta_{2,2} \,\Big(\frac{1}{n-i}-S_2^{(i)}}_{\le c \, n^{-1} \text{ for some } c>0}\Big)
\Big\}^2\Big] \\
&\le o(1)+(1-n^2 \beta_4)^2 \sum_{i=0}^{p-1} \E\Big[\Big\{ \frac{1-nS_2^{(i)}}{n-1} 
\Big\}^2\Big]\\
&\le o(1)+ O\big(n^{3-\alpha +2 \vep}\big)\,,
\end{split}
\end{equation*}
where we used Lemma~\ref{lem:secondmoment} to obtain the third line, and Lemma~\ref{lem:moment24} in the last step. 

In view of \eqref{eq:boundY} and since $\alpha>3$, we have proved \eqref{sumY_i}.

\subsection{Proof of \eqref{sumR_i}}\label{sec:sumR_i}
We need the following lemma.
\begin{lemma}\label{lem:4.1}\cite[Lemma 4.1]{bao:pan:zhou:2015}
For $R_{i+1}$ defined in \eqref{eq:Ri+1} and $a>0$, if $\wt Z_{i+1}\ge -1+(\log n)^{-a}$ one has 
$$|R_{i+1}| \le C (U_{i+1}^2+|V_{i+1}|^{2+\delta}) \log \log n$$
for any $0\le \delta\le 1$. Here $C=C(a,\delta)$ is a positive constant that only depends on $a$ and $\delta$.
\end{lemma}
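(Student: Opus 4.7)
The plan is to combine a priori boundedness of $U_{i+1}$ and $V_{i+1}$ with a case analysis on the value of $\wt Z_{i+1}$. First, I would establish that $|U_{i+1}|$ and $|V_{i+1}|$ are each bounded by a constant $M$ depending only on $\gamma$, uniformly in $i \in \{0,\ldots,p-1\}$ and $n$. Indeed, since $P_i$ is a projection of rank $n-i$, the operator norm of $Q_i = P_i/(n-i)$ equals $1/(n-i)$. Combined with $\|\wt b_{i+1}\|^2 = Y_{i+1,1}^2 + \cdots + Y_{i+1,n}^2 = 1$, this yields $0 \le n\, \wt b_{i+1}^{\top} Q_i \wt b_{i+1} \le n/(n-i) \le n/(n-p+1)$, which is bounded under \eqref{Cgamma}. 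Using the identity $n\, \wt b_{i+1}^{\top} Q_i \wt b_{i+1} = 1 + U_{i+1} + V_{i+1}$ together with the elementary lower bound $U_{i+1} \ge -1$ obtained from \eqref{eq:boundelements} and non-negativity of $q_{i,jj}Y_{i+1,j}^2$, both $U_{i+1}$ and $V_{i+1} = \wt Z_{i+1} - U_{i+1}$ are bounded in absolute value by some $M = M(\gamma)$.

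Next, I would split according to the size of $\wt Z_{i+1}$. In the \emph{benign} case $\wt Z_{i+1} \ge -1/2$, the denominator in \eqref{eq:Ri+1} satisfies $1 + \theta \wt Z_{i+1} \ge 1/2$ since $\theta \in (0,1)$, giving $|R_{i+1}| \le \tfrac{8}{3}\,|\wt Z_{i+1}|^3 \le C(|U_{i+1}|^3 + |V_{i+1}|^3)$. The a priori bounds $|U_{i+1}|, |V_{i+1}| \le M$ then yield $|U_{i+1}|^3 \le M\,U_{i+1}^2$ and $|V_{i+1}|^3 \le M^{1-\delta}|V_{i+1}|^{2+\delta}$ for every $\delta \in [0,1]$, which establishes the claimed inequality in this regime (even without the $\log\log n$ factor).

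In the \emph{boundary} case $\wt Z_{i+1} \in [-1 + (\log n)^{-a},\, -1/2)$, rather than working with the Lagrange form I would use the integrated expression $R_{i+1} = \log(1 + \wt Z_{i+1}) - \wt Z_{i+1} + \tfrac{1}{2}\wt Z_{i+1}^2$. Since $|\wt Z_{i+1}| \le 1$ and $1 + \wt Z_{i+1} \ge (\log n)^{-a}$, one obtains $|R_{i+1}| \le |\log(1 + \wt Z_{i+1})| + 1 + \tfrac{1}{2} \le a\log\log n + \tfrac{3}{2}$. On the other hand, $|U_{i+1} + V_{i+1}| \ge 1/2$ forces $\max(|U_{i+1}|, |V_{i+1}|) \ge 1/4$, so either $U_{i+1}^2 \ge 1/16$ or $|V_{i+1}|^{2+\delta} \ge (1/4)^{2+\delta}$; in either subcase $U_{i+1}^2 + |V_{i+1}|^{2+\delta} \ge c(\delta) > 0$, which absorbs the $\log\log n$ factor. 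Combining the two cases (and using $\log\log n \ge 1$ for $n$ large) completes the proof.

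The main obstacle is the extraction of the $\log\log n$ factor in the boundary case: the naive Lagrange estimate $|R_{i+1}| \le |\wt Z_{i+1}|^3/[3(1+\theta\wt Z_{i+1})^3]$ only yields $(\log n)^{3a}$, which is far too large. The key observation is that the integrated form of the Taylor remainder carries a logarithm $\log(1 + \wt Z_{i+1})$ which contributes only $\log\log n$ precisely when $1 + \wt Z_{i+1}$ is as small as $(\log n)^{-a}$; moreover, in exactly that regime $U_{i+1}^2 + |V_{i+1}|^{2+\delta}$ is automatically bounded below by a positive constant, so no power of $\log n$ leaks into the final bound.
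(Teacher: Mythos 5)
Your proof is correct. The paper does not actually prove this lemma---it imports it verbatim from \cite[Lemma 4.1]{bao:pan:zhou:2015}---so there is no internal argument to compare against; what you have written is a sound, self-contained derivation along the route one would expect. The two essential points are both handled properly: (i) the a priori bounds $|U_{i+1}|,|V_{i+1}|\le M$ follow exactly as you say from $0\le \wt b_{i+1}^{\top}P_i\wt b_{i+1}\le \|\wt b_{i+1}\|^2=1$, from $0\le q_{i,jj}\le (n-i)^{-1}$ in \eqref{eq:boundelements}, and from $n-i\ge n-p+1$; and (ii) the switch from the Lagrange form \eqref{eq:Ri+1} to the integrated form $R_{i+1}=\log(1+\wt Z_{i+1})-\wt Z_{i+1}+\tfrac12\wt Z_{i+1}^2$ on the event $\wt Z_{i+1}<-1/2$ is precisely what replaces the useless $(\log n)^{3a}$ by $a\log\log n$, while $|\wt Z_{i+1}|\ge 1/2$ forces $U_{i+1}^2+|V_{i+1}|^{2+\delta}\ge 4^{-(2+\delta)}\ge 1/64$ uniformly in $\delta\in[0,1]$, so the logarithmic factor is absorbed. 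Two cosmetic remarks only: the final inequality needs $\log\log n\ge 1$ (i.e.\ $n$ large) to cover the benign case, and your constant in fact also depends on $\gamma$ through $M$ and the bound $n/(n-p+1)$ under \eqref{Cgamma}, whereas the statement advertises $C=C(a,\delta)$; both points are immaterial for the way the lemma is used in Section~\ref{sec:sumR_i}.
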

A combination of Lemmas \ref{lem:4.2} and \ref{lem:4.1} yields that, with probability $1-o(1)$, one has
\begin{equation}\label{eq:boundR}
\sum_{i=0}^{p-1}|R_{i+1}| \le C \sum_{i=0}^{p-1}(U_{i+1}^2+|V_{i+1}|^{2+\delta}) \log \log n\,, \qquad 0\le \delta\le 1\,.
\end{equation}
By \eqref{eq:boundR} and Markov's inequality, $\sum_{i=0}^{p-1} R_{i+1}\cip 0$  follows from
\begin{equation*}
\lim_{\nto} \log \log n \sum_{i=0}^{p-1}\Big(\E[U_{i+1}^2]+\E[|V_{i+1}|^{2+\delta}]\Big)=0\,, 
\end{equation*}
which in view of Lyapunov's inequality is implied by 
\begin{equation}\label{eq:boundR1}
\lim_{\nto} \log \log n \sum_{i=0}^{p-1}\Big(\E[U_{i+1}^2]+(\E[V_{i+1}^4])^{(2+\delta)/4}\Big)=0\,, 
\end{equation}
The $U$-part in \eqref{eq:boundR1} follows from Lemma~\ref{lem:asymptoticvariance}.

Finally  by Proposition~\ref{prop:offdiagbound}, we have, for any $\vep>0$ and $n$ sufficiently large, that $E[V_{i+1}^4] \le C\,n^{-\alpha/2+\vep}$, $0\le i\le p-1$, where the constant $C>0$ does not depend on $n$ and $i$. Therefore,
\begin{equation*}
\sum_{i=0}^{p-1} (\E[V_{i+1}^4])^{(2+\delta)/4} \le C^{(2+\delta)/4}\,p\,n^{(-\alpha/2+\vep)(2+\delta)/4}\,
\end{equation*}
With $\delta=1$ and using $p/n\to \gamma\in (0,1)$, the \rhs~is
$$C^{3/4}\frac{p}{n} n^{1-\frac{3\alpha}{8}+\frac{3\vep }{4}} \to 0\,,\qquad \nto\,,$$
for $\vep>0$ sufficiently small  since $\alpha>8/3$. 
This shows the $V$-part in \eqref{eq:boundR1} and completes the proof of \eqref{sumR_i}.

\subsection{Proof of \eqref{sumconst}}\label{sec:sumconst}

In view of \eqref{eq:reess}, equation \eqref{sumconst} follows from
\begin{equation}\label{eq:sufff}
 \sum_{i=0}^{p-1} \tfrac{1}{2}\E[\wt Z_{i+1}^2] -c_n+\mu_n=0\to 0\,,\qquad \nto\,.
\end{equation}
From Lemma~\ref{lem:asymptoticvariance}, we have 
\begin{equation*}
\sum_{i=0}^{p-1} \tfrac{1}{2}\E[\wt Z_{i+1}^2] = \sum_{i=0}^{p-1} \tfrac{\E[ U_{i+1}^2] }{2}+ \sum_{i=0}^{p-1} \tfrac{\E[ V_{i+1}^2] }{2}
\sim -\log(1-p/n)- p/n\,.
\end{equation*}
Recalling the definitions $\mu_n=(p-n+\frac{1}{2})\log(1-\frac{p}{n})-p +\frac{p}{n}$ and $c_n=-p \log n + \log(n(n-1)\cdots (n-p+1))$, \eqref{eq:sufff} is thus equivalent to
 \begin{equation}\label{eq:sufff1}
(p-n-\frac{1}{2})\log(1-p/n)- p - \sum\limits_{i=1}^{p-1}\log(1-i/n)\to 0\,,\qquad \nto\,.
\end{equation}

Taking the logarithm on both sides of the identity
	\begin{eqnarray*}
      \prod\limits_{i=1}^{p-1}\left(1-\frac{i}{n}\right)=\frac{n!(1-(p-1)/n)}{(n-p+1)!\, n^{p-1}}=\frac{n!(n-p+1)}{n(n-p+1)!\, n^{p-1}}=\frac{(n-1)!}{(n-p)!\, n^{p-1}}\,,
  \end{eqnarray*}
we get 
  \begin{eqnarray*}
    \sum\limits_{i=1}^p\log(1-i/n)=\log(n-1)!-(p-1)\log n-\log(n-p)! \,.
  \end{eqnarray*}
We approximate these terms using Stirling's formula $ \log(n!)=n\log n-n+\tfrac 12 \log(2\pi n)+O(n^{-1})$ and obtain
\begin{align*}
&(p-1)\log n - \log(n-1)! +\log(n-p)!= (p-1)\log n -(n-1)\log(n-1)+(n-1)\\
& \quad -\frac{\log(2\pi (n-1))}{2} 
+(n-p)\log(n-p)-(n-p)+\frac{\log(2\pi (n-p))}{2}+O(n^{-1})\\
&= p-1+(p-1)\log n -(n-\tfrac 12)\log(n-1)+(n-p+\tfrac 12)\log(n-p)+O(n^{-1})\\
&=p-1 +(n-\tfrac 12)\log(\tfrac{n}{n-1})+(n-p+\tfrac 12)\log(1-\tfrac pn)+O(n^{-1})\,.
\end{align*}	
Therefore, the \lhs~ in \eqref{eq:sufff1} is 
$-1 +(n-\tfrac 12)\log(\tfrac{n}{n-1})+O(n^{-1})$ which converges to zero as $\nto$.
	
This establishes \eqref{eq:sufff1} and thus finishes the proof of \eqref{sumconst}.

\subsection{Proof of Proposition~\ref{prop:important}}\label{sec:proofofprop:important}
First, we prove \eqref{eq:fourthV}.
Let $\alpha\in (2,4)$ and $\vep\in(0,\alpha/2-1)$. By Proposition~\ref{prop:offdiagbound} we have, for any $\delta>0$ and $n$ sufficiently large, that $E[V_{i+1}^4] \le C\,n^{-\alpha/2+\delta}$, $0\le i\le p-1$, where the constant $C>0$ does not depend on $n$. Therefore,
\begin{equation*}
n^{\vep}\sum_{i=0}^{p-1} \E[V_{i+1}^4] \le C\,p\,n^{-\alpha/2+\delta+\vep}\,
\end{equation*}
and using $p/n\to \gamma\in (0,1)$, the \rhs~converges to zero for sufficiently small $\delta>0$. This proves \eqref{eq:fourthV}.

Next, we turn to the proof of \eqref{eq:fourthU}. Let $\alpha\in (3,4)$ and $\vep\in(0,\alpha-3)$. From Corollary~\ref{cor:fourthmoment}, we know that for $0\le i\le p-1$,
\begin{align}
	&\E[U_{i+1}^4]=\E\E\Big[\Big(\sum_{k=1}^n q_{i,kk} (nY_{i+1,k}^2-1)\Big)^4 \, \Big| \mathcal{F}_i \Big] \nonumber \\
	&=\E[K_{4,4}^{(i)}]n^4\beta_{4,4}+\E[K_{2,2}^{(i)}]n^2\beta_{2,2}+\E[K_{2,2,2}^{(i)}]n^3\beta_{2,2,2}+\E[K_{2,2,2,2}^{(i)}]n^4\beta_{2,2,2,2}+ \E[K^{(i)}]\,, \label{eq:5.28}
  \end{align}
where $S_j^{(i)}=q_{i,11}^j+\cdots  +q_{i,nn}^j$, $j\ge 1$, and 
\begin{align}
K_{4,4}^{(i)}&=3(S_2^{(i)})^2-4S_3^{(i)}+nS_4^{(i)}\,,\qquad K^{(i)}=6nS_2^{(i)}-4n^2S_3^{(i)}+ n^3S_4^{(i)}-3\,, \notag\\
K_{2,2}^{(i)}&=-12 nS_2^{(i)}+8n^2 S_3^{(i)}-2n^3S_4^{(i)}+6\,, \label{k22}\\
K_{2,2,2}^{(i)}&=8nS_2^{(i)}-2n(S_2^{(i)})^2+\frac{8n(1-2n)}{3}S_3^{(i)}+\frac{2n^2(2n-1)}{3}S_4^{(i)}-4\,, \label{k222}\\
K_{2,2,2,2}^{(i)}&=-2nS_2^{(i)}+(2n-3)(S_2^{(i)})^2+\frac{4(n^2-2n+3)}{3}S_3^{(i)}-\frac{n(n^2-2n+3)}{3}S_4^{(i)}+1\,.\label{k2222}
\end{align}

By \eqref{eq:sumK}, we have
\begin{equation}\label{eq:sumKi}
K^{(i)}+K_{4,4}^{(i)}+K_{2,2}^{(i)}+K_{2,2,2}^{(i)}+K_{2,2,2,2}^{(i)}=0\,.
\end{equation}
Plugging \eqref{eq:sumKi} into \eqref{eq:5.28} gives
\begin{align}\label{eq:Z4}
	\E[U_{i+1}^4]
	&=\E[K_{44}^{(i)}](n^4\beta_{4,4}-1)+\E[K_{2,2}^{(i)}](n^2\beta_{2,2}-1) \notag\\ &\quad +\E[K_{2,2,2}^{(i)}](n^3\beta_{2,2,2}-1)+\E[K_{2,2,2,2}^{(i)}](n^4\beta_{2,2,2,2}-1).
  \end{align}
	We will bound the \rhs~ term by term.

Due to $p/n\to \gamma\in (0,1)$ it holds $(1-\gamma)n \sim n-p \le n-i\le n$, so that $n-i$ is of order $n$ for all $0\le i\le p-1$. A combination of this fact with \eqref{eq:boundelements} yields that for sufficiently large $n$ there exists a constant $c>1$ such that $|S_j^{(i)}|\le c^{1/2} n^{1-j}$. 
Thus we get
\begin{equation}\label{eq:k44}
|K_{4,4}^{(i)}|=|3(S_2^{(i)})^2-4S_3^{(i)}+nS_4^{(i)}|\le \frac{3c}{n^2}+\frac{4c}{n^2}+\frac{c}{n^2}=\frac{8c}{n^2}\,.
\end{equation}
Using \eqref{eq:k44}, for any $\vep>0$ and $n$ sufficiently large the first term is bounded by
\begin{equation*}
\Big| \sum_{i=0}^{p-1} \E[K_{44}^{(i)}](n^4\beta_{4,4}-1) \Big|\le \underbrace{|(n^4\beta_{4,4}-1)|}_{\le n^{4-\alpha+\vep}} \sum_{i=0}^{p-1} \E[|K_{44}^{(i)}|] =O(n^{3-\alpha+\vep})\,.
\end{equation*}
Note that $1- n^2\beta_{2,2},1- n^3\beta_{2,2,2},1- n^4\beta_{2,2,2,2}$ are nonnegative.
Thus,
\begin{equation*}
\begin{split}
\sum_{i=0}^{p-1} \E[U_{i+1}^4]&\le O(n^{3-\alpha+\vep})+
(1- n^2\beta_{2,2}) \Big| \sum_{i=0}^{p-1}\E[K_{2,2}^{(i)}] \Big|\\
& \quad+(1- n^3\beta_{2,2,2})\Big| \sum_{i=0}^{p-1}\E[K_{2,2,2}^{(i)}] \Big|+(1- n^4\beta_{2,2,2,2})\Big| \sum_{i=0}^{p-1}\E[K_{2,2,2,2}^{(i)}]\Big|\,.
\end{split}
\end{equation*}
Next, we turn to the remaining terms.   Since 
$$n^2\beta_{2,2}\sim n^3\beta_{2,2,2}\sim n^4\beta_{2,2,2,2}\sim 1 \,,\quad \nto\,,$$
it holds for any $\vep>0$,
\begin{equation*}
1- n^2\beta_{2,2}= 1- n(n-1)\beta_{2,2} + O(n^{-1})=n \beta_4 +O(n^{-1})=O(n^{1-\alpha/2+\vep})\,,
\end{equation*}
where also \eqref{id1} was used. Analogously, applying \eqref{id2}, \eqref{id3} and Lemma~\ref{lem:moment24}, we get for any $\vep>0$ that
\begin{equation*}
1- n^3\beta_{2,2,2}=O(n^{1-\alpha/2+\vep})\quad \text{ and } \quad 1- n^4\beta_{2,2,2,2}=O(n^{1-\alpha/2+\vep})\,.
\end{equation*}
Hence, \eqref{eq:fourthU} is proved if we can show that there exists an $\vep>0$ such that, as $\nto$,
\begin{equation*}
n^{1-\alpha/2+\vep}\Big| \sum_{i=0}^{p-1}\E[K_{2,2}^{(i)}] \Big|\to 0\,, \qquad n^{1-\alpha/2+\vep}\Big| \sum_{i=0}^{p-1}\E[K_{2,2,2}^{(i)}] \Big|\to 0\,, \qquad n^{1-\alpha/2+\vep}\Big| \sum_{i=0}^{p-1}\E[K_{2,2,2,2}^{(i)}] \Big|\to 0\,.
\end{equation*}
Fortunately, Lemma~\ref{lem:nestor3} verifies the latter. The proof is complete.


\appendix

\section{Offdiagonal part of a quadratic form}\label{sec:A}\setcounter{equation}{0}

\begin{proposition}\label{prop:offdiagbound}
Let $\alpha\in (2,4)$. Under the assumptions of Theorem~\ref{thm:main} we have for $\vep>0$ and $n$ sufficiently large,
\begin{equation}\label{eq:offdiagbound}
n^4 \E \Big[ \Big( \sum_{k\neq l} q_{i,kl} Y_{i+1,k} Y_{i+1,l}\Big)^4 \Big] \le C\,n^{-\alpha/2+\vep}\,, \qquad i=0,1,\ldots, p-1\,,
\end{equation}
where the constant $C>0$ does not depend on $n$ and $i$.
\end{proposition}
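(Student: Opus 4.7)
The plan is to condition on $\mathcal{F}_i$, under which $Q_i$ is deterministic while $(Y_{i+1,1},\ldots,Y_{i+1,n})$ has its exchangeable unit-sphere distribution independent of $\mathcal{F}_i$. Expanding the fourth power gives
$$\E\Big[\Big(\sum_{k\neq l} q_{i,kl} Y_{i+1,k}Y_{i+1,l}\Big)^{4}\,\Big|\,\mathcal{F}_i\Big] = \sum_{(k_s,l_s)_{s=1}^4} \prod_{s=1}^4 q_{i,k_s l_s}\cdot \E\Big[\prod_{s=1}^4 Y_{i+1,k_s}Y_{i+1,l_s}\Big],$$
summed over ordered $8$-tuples with $k_s\neq l_s$. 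The key observation is that $X_{11}\eid -X_{11}$ induces a joint sign-symmetry of the $Y_{i+1,\cdot}$'s, so the $Y$-moment vanishes unless every index appears an even number of times among $\{k_1,l_1,\ldots,k_4,l_4\}$. This forces the multiplicity partition of $8$ to have only even positive parts, leaving three patterns: $(4,4)$ on two distinct indices with moment $\beta_{4,4}$, $(4,2,2)$ on three distinct indices with moment $\beta_{4,2,2}$, and $(2,2,2,2)$ on four distinct indices with moment $\beta_{2,2,2,2}$. For the last partition the pair structure is a $2$-regular multigraph on four vertices with four edges, hence either a simple $4$-cycle or two disjoint double edges.

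Next I would estimate the spatial sums using the identities $\tr(Q_i)=1$, $\tr(Q_i^2)=1/(n-i)$, $\tr(Q_i^4)=1/(n-i)^3$ (all from $P_i^2=P_i$ and $\tr(P_i)=n-i$), their consequence $\sum_l q_{i,kl}^2 = q_{i,kk}/(n-i)$, the entrywise bound $|q_{i,kl}|\le 1/(n-i)$ from \eqref{eq:boundelements}, and $\sum_k q_{i,kk}^2 \le \max_k q_{i,kk}\cdot \tr(Q_i) \le 1/(n-i)$. For the $(4,4)$ pattern (one pair $\{a,b\}$ repeated four times) one gets $\sum_{a\neq b} q_{i,ab}^4 \le \max_{k\neq l} q_{i,kl}^2\cdot \sum_{k,l} q_{i,kl}^2 = O(n^{-3})$. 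For the $(4,2,2)$ pattern with central vertex $a$, the linear equations in the pair counts force the pairs to be two copies of $\{a,b\}$ and two copies of $\{a,c\}$, giving $\sum_{a,b,c\,\text{distinct}} q_{i,ab}^2 q_{i,ac}^2 \le \sum_a (\sum_l q_{i,al}^2)^2 = \sum_a q_{i,aa}^2/(n-i)^2 = O(n^{-3})$. The $4$-cycle sub-case of $(2,2,2,2)$ yields $\sum_{a,b,c,d} q_{i,ab} q_{i,bc} q_{i,cd} q_{i,da} = \tr(Q_i^4) = O(n^{-3})$, while the double-edge sub-case gives $\sum q_{i,ab}^2 q_{i,cd}^2 \le (\tr Q_i^2)^2 = O(n^{-2})$. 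The number of ordered $8$-tuples per unordered configuration (from choosing which of the four positions carries which pair and orienting it) is a bounded constant depending only on the pattern.

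Combining with Lemma~\ref{lem:moment24}, which yields $\beta_{4,4}=O(n^{-\alpha+\delta})$, $\beta_{4,2,2}=O(n^{-2-\alpha/2+\delta})$ and $\beta_{2,2,2,2}=O(n^{-4+\delta})$ for any $\delta>0$, the four contributions to $n^4\E[(\cdots)^4]$ become $O(n^{1-\alpha+\delta})$, $O(n^{-1-\alpha/2+\delta})$, $O(n^{-3+\delta})$ and $O(n^{-2+\delta})$ respectively. For $\alpha\in(2,4)$ and $\delta$ chosen sufficiently small, each is dominated by $C\,n^{-\alpha/2+\vep}$: the inequality $1-\alpha\le -\alpha/2$ holds exactly because $\alpha\ge 2$, and $-2\le -\alpha/2$ because $\alpha\le 4$, the other two terms being trivially smaller. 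The main obstacle will be the careful bookkeeping for the $(2,2,2,2)$ pattern, where the double-edge sub-case drives the bound near $\alpha=4$ and the $(4,4)$ case similarly drives it near $\alpha=2$; one must verify that the $O(1)$ combinatorial constants from orientations and position assignments do not spoil the order, and that the trace identities are applied with the correct restriction to distinct indices (the unrestricted sums dominate, so this causes no loss).
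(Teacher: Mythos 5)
Your proposal is correct and reaches the stated bound, but it handles the combinatorial part differently from the paper. Both arguments share the same skeleton: condition on $\mathcal{F}_i$, use the sign-symmetry of the self-normalized row to kill all terms in which some index appears an odd number of times, separate the moment factors $\beta_{\bfk}$ from the spatial sums in the $q_{i,kl}$, and invoke Lemma~\ref{lem:moment24} (with Potter bounds) for the $\beta$'s. Where you diverge is in bounding the spatial sums: the paper groups tuples only by the number $r$ of distinct indices, controls $\big|\sum_{\bfk\in\calK_{r,s}}q_{i,\bfk}\big|$ by applying the quadratic-form moment inequality of Lemma~\ref{lem:qmoment} to an auxiliary Rademacher sequence, and then refines the resulting $(\sum_{k\neq l}q_{i,kl}^2)^{s/2}$ bound to $(n-i)^{-s/2-\lceil (s-r)/2\rceil}$; you instead enumerate the admissible even-degree multigraphs explicitly ($(4,4)$, $(4,2,2)$ with its forced double-star structure, and the two sub-cases of $(2,2,2,2)$) and bound each via trace identities for $Q_i$. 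Your route is more elementary and yields slightly sharper intermediate exponents (e.g.\ $n^{1-\alpha+\delta}$ for the $(4,4)$ term, since you use the exact order of $\beta_{4,4}$ rather than the worst case over $\calK_{2,4}$), at the cost of being tied to $s=4$, whereas the paper's machinery is written for general $s$. One spot that deserves a sentence more care: for the $4$-cycle sub-case of $(2,2,2,2)$ the summands $q_{i,ab}q_{i,bc}q_{i,cd}q_{i,da}$ are signed, so passing from the sum over distinct $a,b,c,d$ to $\tr(Q_i^4)$ requires bounding the coincidence corrections (e.g.\ $a=c$ gives $\sum_a((Q_i^2)_{aa})^2$ and $a=c$, $b=d$ gives $\sum_{a\neq b}q_{i,ab}^4$); all of these are $O((n-i)^{-3})$ by the identities you already list, so the conclusion stands, but the parenthetical ``the unrestricted sums dominate'' only applies verbatim to the nonnegative patterns.
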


\begin{proof}
Let $0\le i\le p-1$ and $s= 4$. Throughout this proof, in the notation $\beta_{m_1,\ldots,m_r}$ we always assume $m_1+\cdots+m_r=s$.  
Using that $Y_{i+1,j}\eid -Y_{i+1,j}$, we have 
\begin{equation*}
\begin{split}
\E \Big[ \Big( \sum_{k_1\neq k_2} q_{i,k_1k_2} Y_{i+1,k_1} Y_{i+1,k_2}\Big)^s \Big| \mathcal{F}_i\Big]
&= \sum_{\substack{k_1\neq k_2,\ldots, k_{2s-1}\neq k_{2s}:\\ \sum_{t=1}^{2s}\delta_{k_j k_t} \text{ is even } \forall 1\le j\le 2s}} q_{i,k_1k_2}\cdots q_{i,k_{2s-1}k_{2s}} \, \E[Y_{i+1,k_1} \cdots Y_{i+1,k_{2s}}]\\
&= \sum_{r=2}^s \sum_{\bfk \in \calK_{r,s}} q_{i,\bfk} \beta_{\bfk} \\
&\le \sum_{r=2}^s \max_{\bfk \in \calK_{r,s}} \beta_{\bfk} \,\, \Big|\sum_{\bfk \in \calK_{r,s}} q_{i,\bfk} \Big|\,,
\end{split}
\end{equation*}
where $\beta_{\bfk}=\E[Y_{i+1,k_1} \cdots Y_{i+1,k_{2s}}]$ and $q_{i,\bfk}=q_{i,k_1k_2}\cdots q_{i,k_{2s-1}k_{2s}}$ for $\bfk = (k_1,\ldots,k_{2s})$, and 
\begin{equation*}
\calK_{r,s}= \Big\{(k_1,\ldots,k_{2s})\in \{1,\ldots,n\}^{2s} \,:\, 
\substack{\#\{k_1,\ldots,k_{2s}\}=r; ~ k_1\neq k_2,\ldots, k_{2s-1}\neq k_{2s};\\
 \sum_{t=1}^{2s}\delta_{k_j k_t} \text{ is even } \forall 1\le j\le 2s} \Big\}\,.
\end{equation*}
Here $\delta_{k_j k_t}$ is the Kronecker-delta, i.e., $\delta_{k_j k_t}=\1_{\{k_j= k_t\}}$. We will bound $\max_{\bfk \in \calK_{r,s}} \beta_{\bfk}$ and $|\sum_{\bfk \in \calK_{r,s}} q_{i,\bfk} |$.
\par

We start with the first term. By Lemma \ref{lem:moment24}, we have for integers $k_1,\ldots,k_r\ge 1$ that
\begin{equation}\label{eq:bound00}
\lim_{\nto} \frac{n^{N_1(1-\alpha/2)+ r\alpha/2}}{L^{r-N_1}(n^{1/2})}  \beta_{2k_1,\ldots, {2k_r}} \sim C(k_1,\ldots,k_r)  \,,
\end{equation}
where $N_1:=N_1(k_1,\ldots,k_r):=\#\{1\le j\le r: k_j=1\}$ and
$$C(k_1,\ldots,k_r):=\frac{(\alpha/2)^{r-N_1}\Gamma(N_1(1-\alpha/2)+ r\alpha/2) \, \prod_{i:k_i\ge 2} \Gamma(k_i-\alpha/2)}{\Gamma(k_1+\cdots+k_r)}.$$
Since $(\prod_j \Gamma(a_j))/\Gamma(\sum_j a_j) \le 1$ for $a_j\ge 0$ we observe that
\begin{equation}\label{eq:sdfasa}
C(k_1,\ldots,k_r)\le (\alpha/2)^{r-N_1}\le 2^{r-N_1}\,.
\end{equation}
We recall the Potter bounds on the regularly varying function $L\ge 0$. For any $\vep>0$ and sufficiently large $n$ it holds
\begin{equation}\label{eq:potter}
n^{-\vep}\le L((n^{1/2}))\le n^{\vep}\,.
\end{equation}
Choose $\vep \in (0,\alpha/2 -1)$. In view of \eqref{eq:bound00}-\eqref{eq:potter}, we have for sufficiently large $n$ that
\begin{equation}\label{eq:bound001}
\beta_{2k_1,\ldots, {2k_r}} \le n^{\alpha/2 (N_1- r)-N_1} L^{r-N_1}(n^{1/2}) 2^{r-N_1}\le   n^{-(\alpha/2-\vep) (r-N_1)-N_1} \,.
\end{equation}
Therefore, we obtain
\begin{equation*}
\begin{split}
\max_{\bfk \in \calK_{r,s}} \beta_{\bfk}&\le \max_{\substack{k_1,\ldots,k_r\ge 1: \\ k_1+\cdots+k_r=s}} \beta_{2k_1,\ldots, {2k_r}}\le n^{-r (\alpha/2-\vep)} \max_{\substack{k_1,\ldots,k_r\ge 1: \\ k_1+\cdots+k_r=s}} n^{N_1(\alpha/2-1-\vep) }\,.
\end{split}
\end{equation*}
Since $N_1\le r-\1_{\{r<s\}}$, we conclude that for large $n$,
\begin{equation}\label{eq:boundterm1}
\max_{\bfk \in \calK_{r,s}} \beta_{\bfk}\le n^{-r-(\alpha/2-1-\vep)\1_{\{r<s\}}}\,.
\end{equation}
This establishes a bound on $\max_{\bfk \in \calK_{r,s}}\beta_{\bfk}$. For later reference, we note that $\alpha/2-1-\vep>0$.
\par

Next, we turn to the bound of $|\sum_{\bfk \in \calK_{r,s}} q_{i,\bfk} |$.
Let $(X_j)_{j\ge 1}$ be an i.i.d. sequence (which is also independent of $\X$) with distribution $\P(X_j=1)=\P(X_j=-1)=1/2$. Using that $\E[X_j^t]=1$ if $t$ is even and zero otherwise, we have as above
\begin{equation}\label{eq:sdfsef}
\begin{split}
\E \Big[ \Big| \sum_{k_1\neq k_2} q_{i,k_1k_2} X_{k_1} X_{k_2}\Big|^s \Big| \mathcal{F}_i\Big]
&= \sum_{r=2}^s \sum_{\bfk \in \calK_{r,s}} q_{i,\bfk} \,.
\end{split}
\end{equation}
Applying Lemma \ref{lem:qmoment} with the sequence $(X_j)$, we get
\begin{equation*}
\E \Big[ \Big| \sum_{k_1\neq k_2} q_{i,k_1k_2} X_{k_1} X_{k_2}\Big|^s \Big| \mathcal{F}_i\Big]\le (Cs)^s\,  \Big(\sum_{k\neq l} q_{i,kl}^2\Big)^{s/2}
\end{equation*}
In view of \eqref{eq:sdfsef}, we see that
\begin{equation}\label{eq:plsegg}
\sum_{r=2}^s \sum_{\bfk \in \calK_{r,s}} q_{i,\bfk}\le \sum_{r=2}^s \sum_{\bfk \in \calK_{r,s}} |q_{i,\bfk}|\le (Cs)^s\,  \Big(\sum_{k\neq l} q_{i,kl}^2\Big)^{s/2}\,,
\end{equation}
where the last inequality follows from the fact that the \rhs~in \eqref{eq:erdos} remains the same if we replace $a_{ij}$ with $|a_{ij}|$. Here $C$ is an absolute constant that does not depend on $s$.

Since the \rhs~in \eqref{eq:boundterm1} depends on $r$, it is important find an upper bound on $|\sum_{\bfk \in \calK_{r,s}} q_{i,\bfk} |$ that uses the value of $r=2,\ldots, s$ as well. If $r=s$ we conclude from \eqref{eq:plsegg} and $\sum_{k,l} q_{i,kl}^2=(n-i)^{-1}$ that
\begin{equation}\label{eq:rs}
\Big |\sum_{\bfk \in \calK_{s,s}} q_{i,\bfk}\Big|\le (Cs)^s\,  \Big(\sum_{k\neq l} q_{i,kl}^2\Big)^{s/2}\le (Cs)^s\Big( \frac{1}{n-i}\Big)^{s/2}\,.
\end{equation}
Note that the term $(\sum_{k\neq l} q_{i,kl}^2)^{s/2}$ actually appears in $\sum_{\bfk \in \calK_{s,s}} q_{i,\bfk}$. Indeed, this follows directly from the definition of the latter sum by setting $k_1=k_3,k_2=k_4,\ldots, k_{2s-2}=k_{2s}$. Hence, the maximum number of distict indices $k_j$ in $q_{i,\bfk}$ and the maximum number of distinct indices in $(\sum_{k\neq l} q_{i,kl}^2)^{s/2}$ are both equal to $r$.
From the definition of $\calK_{r,s}$, recall that $\#\{k_1,\ldots,k_{2s}\}=r$ if $(k_1,\ldots,k_{2s})\in \calK_{r,s}$.

If $r=s-1$, we may thus restrict ourselves to $s-1$ distinct indices. Due to $q_{kl}=q_{lk}$, this yields the bound 
\begin{equation}\label{eq:rs-1}
\Big |\sum_{\bfk \in \calK_{s-1,s}} q_{i,\bfk}\Big|\le (Cs)^s  \Big(\sum_{k\neq l} q_{i,kl}^2\Big)^{s/2-2} \sum_{k_1\neq k_2} q_{i,k_1k_2}^2 \sum_{k_3=1; k_3\neq k_1}^n q_{i,k_1k_3}^2 \le (Cs)^s\Big( \frac{1}{n-i}\Big)^{s/2+1}\,,
\end{equation}
where the last inequality holds since $Q_i^2=Q_i /(n-i)$ and \eqref{eq:boundelements} imply
$$\sum_{l=1}^n q_{i,kl}^2=\frac{q_{i,kk}}{n-i}\le \Big( \frac{1}{n-i}\Big)^2\,.$$
From the definition of $\calK_{r,s}$ and \eqref{eq:boundelements} it follows for $r=2$ that
\begin{equation*}
\Big |\sum_{\bfk \in \calK_{2,s}} q_{i,\bfk}\Big|
= 2^s \Big |\sum_{k<l} q_{i,kl}^s\Big|\le \frac{2}{(n-i)^{s-2}} \sum_{k\neq l} q_{i,kl}^2 \le 2 \Big( \frac{1}{n-i}\Big)^{s-1}\,.
\end{equation*}
In combination with \eqref{eq:rs} and \eqref{eq:rs-1}, this yields that
\begin{equation}\label{eq:allr}
\Big |\sum_{\bfk \in \calK_{s-t,s}} q_{i,\bfk}\Big|\le (Cs)^s\Big( \frac{1}{n-i}\Big)^{s/2+ \lceil{t/2} \rceil}\,, \qquad t=0,\ldots,s-2 \,,
\end{equation}
where $\lceil{t/2} \rceil$ is the smallest integer greater or equal to $t/2$ and $C>0$ is a constant.

Finally, we complete the proof of the proposition. In view of \eqref{eq:boundterm1} and \eqref{eq:allr}, we get for $s= 4$ and sufficiently large $n$,
\begin{equation*}
\begin{split}
n^s \,\Big|\E \Big[ \Big( \sum_{k_1\neq k_2} q_{i,k_1k_2} Y_{i+1,k_1} Y_{i+1,k_2}\Big)^s \Big]\Big|&=
n^s\, \Big|\E \E \Big[ \Big( \sum_{k_1\neq k_2} q_{i,k_1k_2} Y_{i+1,k_1} Y_{i+1,k_2}\Big)^s \Big| \mathcal{F}_i\Big]\Big|\\
&\le n^s\, \E \sum_{r=2}^s \max_{\bfk \in \calK_{r,s}} \beta_{\bfk} \,\, \Big|\sum_{\bfk \in \calK_{r,s}} q_{i,\bfk} \Big|\\
&\le n^s\, \sum_{r=2}^s   n^{-r-(\alpha/2-1-\vep)\1_{\{r<s\}}}  \cdot (Cs)^s\Big( \frac{1}{n-i}\Big)^{s/2+ \lceil{(s-r)/2} \rceil}\\
&\le (\widetilde C s)^s \Big( n^{-s/2}+ \sum_{r=2}^{s-1}   n^{s-r-(\alpha/2-1-\vep)-s/2- \lceil{(s-r)/2} \rceil}  \Big)\\
&=  (\widetilde C s)^s \Big( n^{-s/2}+ n^{s/2-(\alpha/2-1-\vep)}\sum_{r=2}^{s-1}   n^{-r- \lceil{(s-r)/2} \rceil}  \Big)\\
&\le  (\widetilde C s)^s \Big( n^{-s/2}+ n^{s/2-(\alpha/2-1-\vep)}s \,  n^{-1-\lceil{s/2} \rceil}  \Big)\\
&\le \widetilde C^s s^{s+1}\,n^{-1-(\alpha/2-1-\vep)}
\end{split}
\end{equation*}
with some constant $\widetilde C>0$ that does not depend on $n$ or $s$. 
\end{proof}

\section{Additional technical lemmas}\label{sec:B}\setcounter{equation}{0}
The following lemmas are needed in the proof of our main result. Recall the matrix $Q_i=\{q_{i, kl}\}_{k,l=1}^n=P_i/(n-i)$, where the projection matrix $P_i =\bfI_n-B_{(i)}^{\top} (B_{(i)} B_{(i)}^\top)^{-1} B_{(i)}$ for $0\leq i\leq p-1$ and $P_0=\bfI_n$.

\begin{lemma}\label{lem:nestor3}
Let $\alpha\in (3,4)$. Under the conditions of Theorem \ref{thm:main}, there exists an $\vep>0$ such that, as $\nto$,
\begin{equation*}
n^{1-\alpha/2+\vep}\Big| \sum_{i=0}^{p-1}\E[K_{2,2}^{(i)}] \Big|\to 0\,, \qquad n^{1-\alpha/2+\vep}\Big| \sum_{i=0}^{p-1}\E[K_{2,2,2}^{(i)}] \Big|\to 0\,, \qquad n^{1-\alpha/2+\vep}\Big| \sum_{i=0}^{p-1}\E[K_{2,2,2,2}^{(i)}] \Big|\to 0\,,
\end{equation*}
where $S_j^{(i)}=q_{i,11}^j+\cdots  +q_{i,nn}^j$, $j\ge 1$, and $K_{2,2}^{(i)},K_{2,2,2}^{(i)},K_{2,2,2,2}^{(i)}$ are defined in \eqref{k22}, \eqref{k222} and \eqref{k2222}, respectively.
\end{lemma}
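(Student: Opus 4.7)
The plan has two parts: first, use the constraint $\tr Q_i=1$ to reduce each $K_\bullet^{(i)}$ to a simple expression in the ``fluctuation'' quantities $T_j^{(i)}:=\sum_{k}(q_{i,kk}-1/n)^{j}$; second, control these fluctuations via Lemma~\ref{lem:yaskov} combined with a deterministic $\ell^\infty$ bound on the centered diagonal.

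\emph{Step 1 (algebraic reduction).} Since $\tr Q_i=1$, set $d_{i,k}:=q_{i,kk}-1/n$, so $\sum_k d_{i,k}=0$ and $T_j^{(i)}=\sum_k d_{i,k}^j$. The binomial expansion gives
\begin{align*}
S_2^{(i)}&=\tfrac{1}{n}+T_2^{(i)},\\
S_3^{(i)}&=\tfrac{1}{n^2}+\tfrac{3}{n}T_2^{(i)}+T_3^{(i)},\\
S_4^{(i)}&=\tfrac{1}{n^3}+\tfrac{6}{n^2}T_2^{(i)}+\tfrac{4}{n}T_3^{(i)}+T_4^{(i)}.
\end{align*}
Substituting these into \eqref{k22}--\eqref{k2222} and collecting, one finds that all constant terms, all linear-in-$T_2^{(i)}$ terms, and all $T_3^{(i)}$ terms cancel. (The constants vanish because $K_\bullet^{(i)}$ equals zero at the ``symmetric'' point $q_{i,kk}\equiv 1/n$, as forced by \eqref{eq:sumKi}; the other cancellations follow by a direct term-by-term check, using $S_1^{(i)}=1$.) What survives is
\begin{align*}
K_{2,2}^{(i)}&=-2n^3\,T_4^{(i)},\\
K_{2,2,2}^{(i)}&=-2n\,(T_2^{(i)})^2+\tfrac{2n^2(2n-1)}{3}\,T_4^{(i)},\\
K_{2,2,2,2}^{(i)}&=(2n-3)\,(T_2^{(i)})^2-\tfrac{n(n^2-2n+3)}{3}\,T_4^{(i)}.
\end{align*}

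\emph{Step 2 (bounding the fluctuations).} By \eqref{eq:boundelements} and $(C_\gamma)$, for $0\le i\le p-1$ and $n$ large we have $|d_{i,k}|\le 1/(n-i)+1/n\le C/n$ uniformly in $k,i$. Since $T_2^{(i)}\ge 0$ and $T_4^{(i)}\ge 0$, this yields
\[
T_4^{(i)}\le \max_k d_{i,k}^{2}\cdot T_2^{(i)}\le \tfrac{C^2}{n^2}\,T_2^{(i)},\qquad (T_2^{(i)})^2\le (n\max_k d_{i,k}^{2})\cdot T_2^{(i)}\le \tfrac{C^2}{n}\,T_2^{(i)}.
\]
Now, because $\sum_\ell q_{i,\ell\ell}=1$ and the columns of $B_{(i)}$ are i.i.d., $(q_{i,\ell\ell})_\ell$ is exchangeable, hence $\E q_{i,\ell\ell}=1/n$. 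Lemma~\ref{lem:yaskov} with $k=2$, exactly as invoked in Section~\ref{sec:sumZ}, then gives
\[
\sum_{i=0}^{p-1}\E T_2^{(i)}=\sum_{i=0}^{p-1}\sum_{\ell=1}^{n}\bigl(\E q_{i,\ell\ell}^{2}-\tfrac{1}{n^2}\bigr)=O(n^{-1/2}),
\]
and therefore $\sum_i\E T_4^{(i)}=O(n^{-5/2})$ and $\sum_i\E(T_2^{(i)})^2=O(n^{-3/2})$. Plugging these into the formulas of Step 1, I get
\[
\sum_{i=0}^{p-1}\bigl|\E K_{2,2}^{(i)}\bigr|=2n^3\cdot O(n^{-5/2})=O(n^{1/2}),
\]
and the analogous bound $\sum_i|\E K_{2,2,2}^{(i)}|+\sum_i|\E K_{2,2,2,2}^{(i)}|=O(n\cdot n^{-3/2}+n^3\cdot n^{-5/2})=O(n^{1/2})$. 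Hence $n^{1-\alpha/2+\vep}$ times any of the three sums is $O(n^{3/2-\alpha/2+\vep})$, which tends to $0$ whenever $\vep<\alpha/2-3/2$—a nonempty range because $\alpha>3$.

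\emph{Main obstacle.} The substance lies in Step 1: each $K_\bullet^{(i)}$ is a priori a polynomial in the $S_j^{(i)}$ with coefficients of size up to $n^3$, yet after reparametrising by $d_{i,k}$ only the quadratic pieces in the small quantities $T_2^{(i)}$ and $T_4^{(i)}$ survive. This cancellation is not accidental—it reflects that each $K_\bullet^{(i)}$ vanishes at the ``degenerate'' configuration $q_{i,kk}\equiv 1/n$ together with the orthogonality $\sum_k d_{i,k}=0$—but verifying it requires careful book-keeping. Once in hand, the rest is essentially automatic: the deterministic bound $|d_{i,k}|=O(1/n)$ upgrades the first-order estimate $\sum_i\E T_2^{(i)}=O(n^{-1/2})$ to the needed $n^{-3/2}$ and $n^{-5/2}$ rates, and the threshold $\alpha>3$ appears precisely through the constraint $\vep<\alpha/2-3/2$.
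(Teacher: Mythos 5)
Your proof is correct, and I checked the algebra in Step 1: with $d_{i,k}=q_{i,kk}-1/n$ one indeed gets $K_{2,2}^{(i)}=-2n^3T_4^{(i)}$, $K_{2,2,2}^{(i)}=-2n(T_2^{(i)})^2+\tfrac{2n^2(2n-1)}{3}T_4^{(i)}$ and $K_{2,2,2,2}^{(i)}=(2n-3)(T_2^{(i)})^2-\tfrac{n(n^2-2n+3)}{3}T_4^{(i)}$, with all constant, $T_2$-linear and $T_3$ terms cancelling exactly. The route differs from the paper's in a worthwhile way. The paper rewrites each $\E[K_\bullet^{(i)}]$ directly as a combination of $\sum_{\ell}(\E[q_{i,\ell\ell}^k]-n^{-k})$ for $k=2,3,4$ (absorbing the $S_2^2$ and lower-order pieces into $O(n^{-1})$ remainders per $i$) and then invokes Lemma~\ref{lem:yaskov} separately for $k=2$, $k=3$ and $k=4$ to get the rates $O(n^{-1/2})$, $O(n^{-3/2})$, $O(n^{-5/2})$. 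You instead exploit the exact cancellation to reduce everything to $(T_2^{(i)})^2$ and $T_4^{(i)}$, and then use the deterministic bound $|d_{i,k}|\le C/n$ from \eqref{eq:boundelements} to dominate both by $T_2^{(i)}$ times a power of $n$; this means you only need the $k=2$ case of Lemma~\ref{lem:yaskov} (together with $\E[q_{i,\ell\ell}]=1/n$, which the paper also uses). Both arguments arrive at $\sum_i|\E K_\bullet^{(i)}|=O(n^{1/2})$ and the same admissible range $\vep<(\alpha-3)/2$, so the threshold $\alpha>3$ enters identically; your version is somewhat leaner in that it avoids the higher-$k$ instances of Lemma~\ref{lem:yaskov} and produces exact identities rather than identities up to $O(n^{-1})$ remainders.
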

\begin{proof}
Let's rewrite $\E[K_{2,2}^{(i)}]$, $\E[K_{2,2,2}^{(i)}]$ and $\E[K_{2,2,2,2}^{(i)}] $ in the following way
\begin{align*}
  \E[K_{2,2}^{(i)}]&= -12n\sum\limits_{\ell=1}^n\left(\E[q^2_{i,\ell\ell}]-\frac{1}{n^2}\right)+8n^2\sum\limits_{\ell=1}^n\left(\E[q^3_{i,\ell\ell}]-\frac{1}{n^3}\right)-2n^3\sum\limits_{\ell=1}^n\left(\E[q^4_{i,\ell\ell}]-\frac{1}{n^4}\right),\\
\E[K_{2,2,2}^{(i)}]&= 8n\sum\limits_{\ell=1}^n\left(\E[q^2_{i,\ell\ell}]-\frac{1}{n^2}\right) -\frac{16n^2}{3}\sum\limits_{\ell=1}^n\left(\E[q^3_{i,\ell\ell}]-\frac{1}{n^3}\right)+\frac{4n^3}{3}\sum\limits_{\ell=1}^n\left(\E[q^4_{i,\ell\ell}]-\frac{1}{n^4}\right)+O(n^{-1}),\\
  \E[K_{2,2,2,2}^{(i)}]&= -2n\sum\limits_{\ell=1}^n\left(\E[q^2_{i,\ell\ell}]-\frac{1}{n^2}\right)+\frac{4n^2}{3}\sum\limits_{\ell=1}^n\left(\E[q^3_{i,\ell\ell}]-\frac{1}{n^3}\right)-\frac{n^3}{3}\sum\limits_{\ell=1}^n\left(\E[q^4_{i,\ell\ell}]-\frac{1}{n^4}\right) + O(n^{-1}) \,,
\end{align*}
where we have  used the fact that $|S_2^{(i)}|\leq C n^{-1}$ for some constant $C>1$. The application of Lemma \ref{lem:yaskov} for $k=2, 3, 4$  leads to
\begin{eqnarray*}
 n^{1-\alpha/2+\vep}\Big| \sum\limits_{i=0}^{p-1}\E[K_{2,2}^{(i)}]\Big| &\leq&  O\left(n^{1-\alpha/2+\vep}n^{1/2}\right)=O\left(n^{(3-\alpha)/2+\vep}\right)\,.
\end{eqnarray*}
Similarly, we get
\begin{eqnarray*}
   n^{1-\alpha/2+\vep}\Big| \sum\limits_{i=0}^{p-1}\E[K_{2,2,2}^{(i)}]\Big| &=&  O\left(n^{(3-\alpha)/2+\vep}\right),\\
   n^{1-\alpha/2+\vep}\Big| \sum\limits_{i=0}^{p-1}\E[K_{2,2,2,2}^{(i)}]\Big| &=&  O\left(n^{(3-\alpha)/2+\vep}\right)\,,
\end{eqnarray*}
which verifies the statement of the lemma by noting that $\alpha>3$.
\end{proof}

\begin{lemma}\label{lem:yaskov}
Under the conditions of Theorem \ref{thm:main}, it holds for all $k\geq 2$ that
\begin{equation}\label{eq:fdfdd}
  0\leq n^{k-2}\sum_{i=0}^{p-1} \sum_{\ell=1}^n \left(\E[q_{i,\ell \ell}^k] - \frac{1}{n^{k}}\right) \leq O(n^{-1/2})\,, \qquad \nto\,.
\end{equation}
\end{lemma}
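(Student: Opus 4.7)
The lower bound is immediate from Jensen's inequality: since $\sum_{\ell=1}^n q_{i,\ell\ell}=\tr(Q_i)=1$ with $q_{i,\ell\ell}\ge 0$, convexity of $x\mapsto x^k$ for $k\ge 2$ yields $\sum_\ell q_{i,\ell\ell}^k\ge n\cdot n^{-k}=n^{1-k}$ almost surely, so each summand in the lemma is nonnegative after taking expectations.

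For the upper bound, the plan is to reduce the $k$-th power sum to a second-order sum. Writing $d_{i,\ell}:=q_{i,\ell\ell}-1/n$, one has $\sum_\ell d_{i,\ell}=0$, and since $P_i$ is an orthogonal projection, $0\le q_{i,\ell\ell}\le 1/(n-i)$, so under $p/n\to\gamma\in(0,1)$ there is $C=C(\gamma)>0$ with $|d_{i,\ell}|\le C/n$ uniformly in $\ell,i,n$. The binomial theorem and the vanishing of the linear sum give
\begin{equation*}
\sum_\ell q_{i,\ell\ell}^k-n^{1-k}=\sum_{j=2}^k\binom{k}{j}n^{-(k-j)}\sum_\ell d_{i,\ell}^j,
\end{equation*}
and the inequality $|d_{i,\ell}^j|\le(C/n)^{j-2}d_{i,\ell}^2$ for $j\ge 2$ collapses every term on the right into a multiple of $n^{-(k-2)}\sum_\ell d_{i,\ell}^2$. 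Combined with the algebraic identity $\sum_\ell d_{i,\ell}^2=S_2^{(i)}-1/n$, the problem reduces to showing
\begin{equation*}
\sum_{i=0}^{p-1}\bigl(\E[S_2^{(i)}]-\tfrac{1}{n}\bigr)=O(n^{-1/2}),\qquad\nto,
\end{equation*}
which in particular handles the $k=2$ case directly and, multiplied by the universal constant $C'_k$ produced by the binomial expansion, handles every $k\ge 2$.

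To tackle this remaining bound I would invoke exchangeability: permuting the columns of $\X$ leaves its law invariant and conjugates $P_i$ by the corresponding permutation matrix, so $(P_{i,\ell\ell})_\ell$ is exchangeable with $\E[P_{i,11}]=(n-i)/n$, whence $\E[S_2^{(i)}]-1/n=n\,\Var(q_{i,11})=n\,\Var(P_{i,11})/(n-i)^2$. Since $(n-i)\asymp n$ on $0\le i\le p-1$, the whole lemma reduces to the variance estimate $\sum_{i=0}^{p-1}\Var(P_{i,11})=O(n^{1/2})$.

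This variance estimate is the main obstacle: the trivial pointwise bound $\Var(P_{i,11})\le\E[P_{i,11}](1-\E[P_{i,11}])=i(n-i)/n^2$, coming from $P_{i,11}^2\le P_{i,11}$, yields only $\sum_i\Var(P_{i,11})=O(n)$, off by exactly a factor of $n^{1/2}$. My plan is to close the gap via a Doob martingale along the filtration $\mathcal{G}_j=\sigma(b_1,\ldots,b_j)$, exploiting the rank-one update $P_{j+1}=P_j-u_{j+1}u_{j+1}^{\top}/\|u_{j+1}\|^2$ with $u_{j+1}=P_j b_{j+1}$, together with the conditional identities $\E[u_{j+1,\ell}^2\mid\mathcal{G}_j]=P_{j,\ell\ell}$ and $\E[\|u_{j+1}\|^2\mid\mathcal{G}_j]=n-j$. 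Each martingale increment is then controlled in $L^2$ by the concentration of the self-normalized ratio $u_{j+1,1}^2/\|u_{j+1}\|^2$ around $P_{j,11}/(n-j)$; crucially this ratio is bounded by $1$, so the estimate is viable even though $\E X_{11}^4=\infty$, provided symmetry and the finite third absolute moment ($\alpha>3$) are used in the spirit of the offdiagonal quadratic-form analysis of Appendix~\ref{sec:A}. Summing the squared increments produces $\Var(P_{i,11})=O(i/n^2)$ uniformly in $i\le p-1$, so $\sum_{i=0}^{p-1}\Var(P_{i,11})=O(p^2/n^2)=O(1)$, comfortably beating the required $O(n^{1/2})$ and completing the proof.
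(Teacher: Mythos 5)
Your lower bound via Jensen and your reduction of the general case $k\ge 2$ to $k=2$ are both correct; in fact the reduction (using $\sum_\ell d_{i,\ell}=0$ and $|d_{i,\ell}|\le C/n$ to collapse the binomial expansion onto the $j=2$ term) is cleaner than the paper's treatment, which establishes the rate $O(n^{2-k-1/2})$ for each power $k$ separately. The identity $\E[S_2^{(i)}]-1/n=n\Var(q_{i,11})=n\Var(P_{i,11})/(n-i)^2$ via exchangeability is also fine, so you have correctly reduced the lemma to the estimate $\sum_{i=0}^{p-1}\Var(P_{i,11})=O(n^{1/2})$.

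The genuine gap is in the last step, which is where all of the difficulty of this lemma lives. First, the bound you claim, $\Var(P_{i,11})=O(i/n^2)$, is the \emph{Gaussian} rate and is false under infinite fourth moment: already for $i=1$ one has $P_{1,11}=1-Y_{11}^2$, so $\Var(P_{1,11})=\beta_4-\beta_2^2\sim c\,n^{-\alpha/2}L(n^{1/2})$ by Lemma \ref{lem:moment24}, which for $\alpha\in(3,4)$ is of much larger order than $1/n^2$. Each rank-one update contributes a heavy-tail fluctuation of this order, so the true size of $\sum_{i}\Var(P_{i,11})$ is roughly $p^2\,n^{-\alpha/2}\asymp n^{2-\alpha/2}$, which is $O(n^{1/2})$ \emph{precisely because} $\alpha>3$ --- the target bound is tight, not ``comfortably beaten'' by $O(1)$ as you assert. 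Second, the step you defer (``controlled in $L^2$ by the concentration of the self-normalized ratio \dots in the spirit of Appendix \ref{sec:A}'') is exactly the hard part: the denominator $\|u_{j+1}\|^2=b_{j+1}^\top P_j b_{j+1}$ fluctuates around $n-j$ through $\sum_\ell P_{j,\ell\ell}(X_{j+1,\ell}^2-1)$, whose summands have infinite variance when $\alpha<4$, so $\E[W_{j+1}\mid\mathcal G_j]$ is not $P_{j,11}/(n-j)$ and quantifying the error requires the full machinery the paper deploys (regularized resolvents $({B}_{(i)}{B}_{(i)}^\top+\epsilon_n n\bfI)^{-1}$, Sherman--Morrison, truncation of the entries at $\zeta_n\sqrt n$ using symmetry and $\E|X_{11}|^3<\infty$, and Bai--Silverstein's Lemma B.26 with the truncated fourth moment $\hat\nu_4\sim\zeta_n^{4-\alpha}n^{(4-\alpha)/2}L(\zeta_n\sqrt n)$, followed by an optimization over $\epsilon_n$). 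Your proposal names a plausible skeleton but neither carries out this estimate nor lands on a correct quantitative claim, so the proof is incomplete at its decisive point.
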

\begin{proof}
  First, using Jensen's inequality and the fact that $\sum_{\ell=1}^n q_{i,\ell \ell}=1$ with $q_{i,ll}\geq 0$ we observe that
  \begin{eqnarray*}
   \frac{1}{n^k}=\left(\frac{1}{n}\sum_{\ell=1}^n q_{i,\ell \ell}\right)^k\leq \frac{1}{n}\sum_{\ell=1}^n q^k_{i,\ell \ell}\,,
  \end{eqnarray*}
  which implies that
  \begin{eqnarray}\label{eq:sdfsdd}
  \frac{1}{n^{k-1}}\leq \sum_{\ell=1}^n q^k_{i,\ell \ell}\,.
  \end{eqnarray}
  Then, using this lower bound it follows that
  \begin{eqnarray*}
    \frac{p}{n^{k-1}}\leq \sum_{i=0}^{p-1}\sum_{\ell=1}^n \left(q_{i,\ell \ell}-\frac{1}{n}+\frac{1}{n}\right)^k=
\frac{p}{n^{k-1}}+ \sum\limits_{i=0}^{p-1}\sum\limits_{\ell=1}^n\sum\limits_{j=0}^{k-1}\binom{k}{j}\left(q_{i,\ell \ell}-\frac{1}{n}\right)^{k-j}\frac{1}{n^{j}}
  \end{eqnarray*}
  and, thus, taking expectations yields
  \begin{eqnarray*}
   0&\leq& \sum_{i=0}^{p-1}\sum_{\ell=1}^n \E[q^k_{i,\ell \ell}]-\frac{p}{n^{k-1}}=
\sum\limits_{i=0}^{p-1}\sum\limits_{\ell=1}^n\sum\limits_{j=0}^{k-2}\binom{k}{j}\E\left(q_{i,\ell \ell}-\frac{1}{n}\right)^{k-j}\frac{1}{n^{j}}\\
&=&\sum\limits_{j=0}^{k-2}\sum\limits_{i=0}^{p-1}\sum\limits_{l=1}^n\binom{k}{j}\E\left(q_{i,\ell \ell}-\frac{1}{n}\right)^{k-j}\frac{1}{n^{j}} \,,
  \end{eqnarray*}
where we have used for $j=k-1$ the property $\sum_{\ell=1}^n \E\left(q_{i,\ell \ell}-\frac{1}{n}\right)=0$.
 Next we will show that for any $k\geq 2$
  \begin{eqnarray}\label{rate_k}
\sum_{i=0}^{p-1}\sum_{\ell=1}^n \E\left(q_{i,\ell \ell}-\frac{1}{n}\right)^k=O\left( n^{2-k-1/2} \right)\,,    
  \end{eqnarray}
which will in fact imply that every term $\sum\limits_{i=0}^{p-1}\sum\limits_{\ell=1}^n\binom{k}{j}\E\left(q_{i,\ell \ell}-\frac{1}{n}\right)^{k-j}\frac{1}{n^{j}}$ will have the same order as the first one, i.e., for $j=0$, and, thus, because $k$ is fixed, we will get
\begin{eqnarray*}
n^{k-2} \sum\limits_{j=0}^{k-2} \sum\limits_{i=0}^{p-1}\sum\limits_{\ell=1}^n\binom{k}{j}\E\left(q_{i,\ell \ell}-\frac{1}{n}\right)^{k-j}\frac{1}{n^{j}}=O\left( n^{-1/2 } \right)\,.
\end{eqnarray*}

We define for any $k\geq 2$
  \begin{eqnarray*}
\delta_n:=\delta_{n,k}:=\sum_{i=0}^{p-1}\sum_{\ell=1}^n \E\left(q_{i,\ell \ell}-\E(q_{i,\ell \ell})\right)^k\,,
  \end{eqnarray*}
  where $\E(q_{i,\ell \ell})=\frac{1}{n}$, which follows from the fact that $q_{i,\ell \ell}$ are identically distributed over $\ell$ and $q_{i,11}+\cdots+q_{i,nn}=1$.
  Hence, it is enough to show that $\delta_n=O\big( n^{2-k-1/2} \big)$.
 Denote for $\ell=1,\ldots,n$ the vector $v_{\ell,i}$ as the $\ell$-th column of the matrix $B_{(i)}$.  First, we note that for all $\ell=1,\ldots,n$ it holds
  \begin{eqnarray*}
   p_{i,\ell\ell}=1-v^\top_{\ell,i}(B_{(i)} B_{(i)}^\top)^{-1}v_{\ell,i}\,.
  \end{eqnarray*}
 Denote now $\tilde{p}_{i,\ell\ell}=1-p_{i,\ell\ell}$ and use Minkowski's inequality to get
  \begin{eqnarray*}
    \delta_n&=&\sum_{i=0}^{p-1}\frac{n}{(n-i)^k} \E\left(p_{i,11}-\E(p_{i,11})\right)^k\le\sum_{i=0}^{p-1}\frac{n}{(n-i)^k} \E\left|\tilde{p}_{i,11}-\E(\tilde{p}_{i,11})\right|^k\\
 &=& \sum_{i=0}^{p-1}\frac{n}{(n-i)^k} \E\left|{v}^\top_{1,i}({B}_{(i)}{B}_{(i)}^\top)^{-1}{v}_{1,i}-\E({v}^\top_{1,i}({B}_{(i)} {B}_{(i)}^\top)^{-1}{v}_{1,i})\right|^k  \overset{\text{Minkowski}}{\leq} C\left(\delta^{(1)}_n+\delta^{(2)}_n+\delta^{(3)}_n\right)
  \end{eqnarray*}
  with some constant $C>0$ possibly depending on $k$, whose value is not important and may change from line to line, and 
  \begin{eqnarray*}
    \delta^{(1)}_n&=&\sum_{i=0}^{p-1}\frac{n}{(n-i)^k} \E\left|{v}^\top_{1,i}({B}_{(i)}{B}_{(i)}^\top)^{-1}{v}_{1,i}-{v}^\top_{1,i}({B}_{(i)}{B}_{(i)}^\top+\epsilon_nn\bfI_i)^{-1}{v}_{1,i}\right|^k,\\
    \delta^{(2)}_n&=& \sum_{i=0}^{p-1}\frac{n}{(n-i)^k} \E\left|{v}^\top_{1,i}({B}_{(i)}{B}_{(i)}^\top+\epsilon_nn\bfI_i)^{-1}{v}_{1,i}-\frac{\E\tr({B}_{(i,1)}{B}_{(i,1)}^\top+\epsilon_nn\bfI_i)^{-1}}{1+\E\tr({B}_{(i,1)}{B}_{(i,1)}^\top+\epsilon_nn\bfI_i)^{-1}}\right|^k,\\
    \delta^{(3)}_n&=&\sum_{i=0}^{p-1}\frac{n}{(n-i)^k} \left|\frac{\E\tr({B}_{(i,1)}{B}_{(i,1)}^\top+\epsilon_nn\bfI_i)^{-1}}{1+\E\tr({B}_{(i,1)}{B}_{(i,1)}^\top+\epsilon_nn\bfI_i)^{-1}}-\E({v}^\top_{1,i}({B}_{(i)}{B}_{(i)}^\top)^{-1}{v}_{1,i})\right|^k,
  \end{eqnarray*}
  where $\epsilon_n$ is a sequence tending to zero arbitrarily slower than $1/n$ and $B_{(i,1)}$ denotes the matrix obtained from $B_{(i)}$ by deleting the $1$st column $v_{1,i}$.  Let's consider $\delta_n^{(1)}$ first. It holds
  \begin{align}
     \Big|{v}^\top_{1,i}&({B}_{(i)}{B}_{(i)}^\top)^{-1}{v}_{1,i}-{v}^\top_{1}({B}_{(i)}{B}_{(i)}^\top+\epsilon_nn\bfI_i)^{-1}{v}_{1,i}\Big|= \epsilon_nn\cdot{v}^\top_{1,i}({B}_{(i)}{B}_{(i)}^\top)^{-1}({B}_{(i)}{B}_{(i)}^\top+\epsilon_nn\bfI_i)^{-1}{v}_{1,i} \nonumber\\
&\leq \frac{\epsilon_nn}{\lambda_{min}({B}_{(i)}{B}_{(i)}^\top+\epsilon_nn\bfI_i)}\underbrace{{v}^\top_{1,i}({B}_{(i)}{B}_{(i)}^\top)^{-1}{v}_{1,i}}_{=\tilde{p}_{i,11}\leq 1}\nonumber\\
    & \leq \frac{\epsilon_nn}{\lambda_{min}({B}_{(i)}{B}_{(i)}^\top)}\sim \frac{\epsilon_n}{(1-\sqrt{\frac{i}{n}})^2}\leq \frac{\epsilon_n}{(1-\sqrt{\frac{p}{n}})^2}\le C \epsilon_n\label{delta1} \,.
  \end{align}
  Thus, for $\delta_n^{(1)}$ and sufficiently large $n$, we have 
  \begin{eqnarray*}
    \delta_n^{(1)}\le C^k \sum_{i=0}^{p-1}\frac{n}{(n-i)^k} \epsilon^k_n \leq \underbrace{\frac{np}{(n-p+1)^2}}_{=O(1)} O(\epsilon^k_nn^{-k+2})=O(\epsilon^k_nn^{-(k-2)})\,.
  \end{eqnarray*}
  Now we proceed to $\delta_n^{(2)}$. Let's consider the following expression
  \begin{eqnarray}
   && \E\left|{v}^\top_{1,i}({B}_{(i)}{B}_{(i)}^\top+\epsilon_nn\bfI_i)^{-1}v_{1,i}-\frac{\E\tr({B}_{(i,1)}{B}_{(i,1)}^\top+\epsilon_nn\bfI_i)^{-1}}{1+\E\tr({B}_{(i,1)}{B}_{(i,1)}^\top+\epsilon_nn\bfI_i)^{-1}}\right|^k\nonumber\\
    &=&    \E\left|\frac{{v}^\top_{1,i}({B}_{(i,1)}{B}_{(i,1)}^\top+\epsilon_nn\bfI_i)^{-1}v_{1,i}}{1+{v}^\top_{1,i}({B}_{(i,1)}{B}_{(i,1)}^\top+\epsilon_nn\bfI_i)^{-1}v_{1,i}}-\frac{\E\tr({B}_{(i,1)}{B}_{(i,1)}^\top+\epsilon_nn\bfI_i)^{-1}}{1+\E\tr({B}_{(i,1)}{B}_{(i,1)}^\top+\epsilon_nn\bfI_i)^{-1}}\right|^k\nonumber\\
    &=&  \E\frac{\left|{v}^\top_{1,i}({B}_{(i,1)}{B}_{(i,1)}^\top+\epsilon_nn\bfI_i)^{-1}v_{1,i}-\E\tr({B}_{(i,1)}{B}_{(i,1)}^\top+\epsilon_nn\bfI_i)^{-1}\right|^k}{(1+\E\tr({B}_{(i,1)}{B}_{(i,1)}^\top+\epsilon_nn\bfI_i)^{-1})^k(1+{v}^\top_{1,i}({B}_{(i,1)}{B}_{(i,1)}^\top+\epsilon_nn\bfI_i)^{-1}v_{1,i})^k}\nonumber\\
    &\leq&  \E\left|{v}^\top_{1,i}({B}_{(i,1)}{B}_{(i,1)}^\top+\epsilon_nn\bfI_i)^{-1}v_{1,i}-\E\tr({B}_{(i,1)}{B}_{(i,1)}^\top+\epsilon_nn\bfI_i)^{-1}\right|^k\nonumber\\  
&\leq& C\left(\E\left|{v}^\top_{1,i}({B}_{(i,1)}{B}_{(i,1)}^\top+\epsilon_nn\bfI_i)^{-1}v_{1,i} - \tr({B}_{(i,1)}{B}_{(i,1)}^\top+\epsilon_nn\bfI_i)^{-1}\right|^k\right.\label{delta21}\\
& +& \left.\E\left| \tr({B}_{(i,1)}{B}_{(i,1)}^\top+\epsilon_nn\bfI_i)^{-1} - \E\tr({B}_{(i,1)}{B}_{(i,1)}^\top+\epsilon_nn\bfI_i)^{-1}\right|^k \right).\label{delta22}
  \end{eqnarray}
First, we analyze the term in \eqref{delta22} and 
define $\E_\ell:=\E(\cdot|v_{\ell,i},\ldots, v_{n,i})$ for $\ell=1,\ldots,n$ and $\E_{n+1}:=\E$. It holds

\vspace{-0.5cm}

  \begin{eqnarray}
&&    \tr({B}_{(i,1)}{B}_{(i,1)}^\top+\epsilon_nn\bfI_i)^{-1}-\E\tr({B}_{(i,1)}{B}_{(i,1)}^\top+\epsilon_nn\bfI_i)^{-1}\nonumber\\
    &=&\sum_{\ell=2}^n(\E_\ell-\E_{\ell+1})\tr({B}_{(i,1)}{B}_{(i,1)}^\top+\epsilon_nn\bfI_i)^{-1}\nonumber\\
    &=& \sum_{\ell=2}^n(\E_\ell-\E_{\ell+1})\left(\tr({B}_{(i,1)}{B}_{(i,1)}^\top+\epsilon_nn\bfI_i)^{-1} - \tr({B}_{(i,1)}{B}_{(i,1)}^\top-{v}_{\ell,i}{v}^\top_{\ell,i}+\epsilon_nn\bfI_i)^{-1} \right)\label{martingale_diff}\,,
  \end{eqnarray}
where the properties $\E_\ell(\tr({B}_{(i,1)}{B}_{(i,1)}^\top-{v}_{\ell,i}{v}^\top_{\ell,i}+\epsilon_nn\bfI_i)^{-1})=\E_{\ell+1}(\tr({B}_{(i,1)}{B}_{(i,1)}^\top-{v}_{\ell,i}{v}^\top_{\ell,i}+\epsilon_nn\bfI_i)^{-1})$ and  $\E_2(\tr({B}_{(i,1)}{B}_{(i,1)}^\top+\epsilon_nn\bfI_i)^{-1})=\tr({B}_{(i,1)}{B}_{(i,1)}^\top+\epsilon_nn\bfI_i)^{-1}$   were used.  Together with the definition of the  martingale differences sequence and Sherman-Morrison formula it implies
  \begin{eqnarray}
  && \E\left|\tr({B}_{(i,1)}{B}_{(i,1)}^\top+\epsilon_nn\bfI_i)^{-1}-\E\tr({B}_{(i,1)}{B}_{(i,1)}^\top+\epsilon_nn\bfI_i)^{-1}\right|^k\nonumber\\
    &\leq& C\sum_{\ell=2}^n\E\left|(\E_\ell-\E_{\ell+1})\left(\tr({B}_{(i,1)}{B}_{(i,1)}^\top+\epsilon_nn\bfI_i)^{-1} - \tr({B}_{(i,1)}{B}_{(i,1)}^\top-{v}_{\ell,i}{v}^\top_{\ell,i}+\epsilon_nn\bfI_i)^{-1}\right)\right|^k\nonumber\\
    &=&C\sum_{\ell=2}^n\E\left|(\E_\ell-\E_{\ell+1})\frac{{v}^\top_{\ell,i}({B}_{(i,1)}{B}_{(i,1)}^\top-{v}_{\ell,i}{v}^\top_{\ell,i}+\epsilon_nn\bfI_i)^{-2}{v}_{\ell,i}}{1+{v}^\top_{\ell,i}({B}_{(i,1)}{B}_{(i,1)}^\top-{v}_{\ell,i} {v}^\top_{\ell,i}+\epsilon_nn\bfI_i)^{-1}{v}_{\ell,i}}\right|^k\nonumber\\
&\leq&   \frac{C}{\epsilon^k_nn^{k}}\sum_{\ell=2}^n\E\Bigg|(\E_\ell-\E_{\ell+1})\underbrace{\frac{{v}^\top_{\ell,i}({B}_{(i,1)}{B}_{(i,1)}^\top-{v}_{\ell,i}{v}^\top_{\ell,i}+\epsilon_nn\bfI_i)^{-1}{v}_{\ell,i}}{1+{v}^\top_{\ell,i}({B}_{(i,1)}{B}_{(i,1)}^\top-{v}_{\ell,i} {v}^\top_{\ell,i}+\epsilon_nn\bfI_i)^{-1}{v}_{\ell,i}}}_{\leq 1}\Bigg|^k\nonumber\\
    &\leq&\frac{C(n-1) 2^k}{\epsilon^k_nn^k}=O(\epsilon^{-k}_nn^{-(k-1)})\label{delta2}\,.
  \end{eqnarray}
Next, we turn to \eqref{delta21}. 
To this end, we show that ${v}^\top_{1,i}({B}_{(i,1)}{B}_{(i,1)}^\top+\epsilon_nn\bfI_i)^{-1}v_{1,i}$ is bounded.
 Using the Sherman-Morrison formula we get

  \begin{eqnarray*}
 &&   {v}^\top_{1,i}({B}_{(i,1)}{B}_{(i,1)}^\top+\epsilon_nn\bfI_i)^{-1}v_{1,i}=  {v}^\top_{1,i}({B}_{(i)}{B}_{(i)}^\top+\epsilon_nn\bfI_i)^{-1}v_{1,i}+\frac{({v}^\top_{1,i}({B}_{(i)}{B}_{(i)}^\top+\epsilon_nn\bfI_i)^{-1}v_{1,i})^2}{1-{v}^\top_{1,i}({B}_{(i)}{B}_{(i)}^\top+\epsilon_nn\bfI_i)^{-1}v_{1,i}} \\
&=& \frac{{v}^\top_{1,i}({B}_{(i)}{B}_{(i)}^\top+\epsilon_nn\bfI_i)^{-1}v_{1,i}}{1-{v}^\top_{1,i}({B}_{(i)}{B}_{(i)}^\top+\epsilon_nn\bfI_i)^{-1}v_{1,i}} \leq \frac{1}{1-\kappa}
  \end{eqnarray*}
because ${v}^\top_{1,i}({B}_{(i)}{B}_{(i)}^\top+\epsilon_nn\bfI_i)^{-1}v_{1,i}\leq \kappa$ for some $\kappa<1$, for which Lemma A.1 in the Appendix of \cite{anatolyev:yaskov:2017} was used.  
Thus  $0\leq {v}^\top_{1,i}({B}_{(i,1)}{B}_{(i,1)}^\top+\epsilon_nn\bfI_i)^{-1}v_{1,i}\leq  C$, so it is enough to consider the case $k=2$. Indeed, let $k\geq 3$, then we get
\begin{eqnarray}
&& \E\left| {v}^\top_{1,i}({B}_{(i,1)}{B}_{(i,1)}^\top+\epsilon_nn\bfI_i)^{-1}v_{1,i} - \tr({B}_{(i,1)}{B}_{(i,1)}^\top+\epsilon_nn\bfI_i)^{-1}\right|^k\nonumber\\
&=& \E\left| {v}^\top_{1,i}({B}_{(i,1)}{B}_{(i,1)}^\top+\epsilon_nn\bfI_i)^{-1}v_{1,i} - \E_2{v}^\top_{1,i}({B}_{(i,1)}{B}_{(i,1)}^\top+\epsilon_nn\bfI_i)^{-1}v_{1,i}\right|^2\nonumber\\
&\times&\left| {v}^\top_{1,i}({B}_{(i,1)}{B}_{(i,1)}^\top+\epsilon_nn\bfI_i)^{-1}v_{1,i} - \E_2{v}^\top_{1,i}({B}_{(i,1)}{B}_{(i,1)}^\top+\epsilon_nn\bfI_i)^{-1}v_{1,i}\right|^{k-2}\nonumber\\
&\leq&
C\cdot \E\left| {v}^\top_{1,i}({B}_{(i,1)}{B}_{(i,1)}^\top+\epsilon_nn\bfI_i)^{-1}v_{1,i} - \E_2{v}^\top_{1,i}({B}_{(i,1)}{B}_{(i,1)}^\top+\epsilon_nn\bfI_i)^{-1}v_{1,i}\right|^2\label{delta_21}  \,. 
\end{eqnarray} 
Further we truncate the elements of the vector $v_{1,i}$ at the level $\zeta_n \sqrt{n}$, i.e., denote $\hat v_{1j,i}=v_{1j,i}\cdot\1\{|v_{1j,i}|\leq \zeta_n\sqrt{n}\}$ with $\zeta_n$ arbitrarily slow converging to zero but no faster than $n^{-1/2}$, i.e., $\zeta_n\sqrt{n}\to+\infty$. Note that  $\hat v_{1j,i}$ has mean zero since $v_{1j,i}$ is symmetrically distributed. Because the third absolute moment of $v_{1j,i}$ is finite it holds
\begin{eqnarray*}
  \lim\limits_{n\to\infty} \frac{\E|v_{1j,i}|^3\1\{|v_{1j,i}|>\zeta_n\sqrt{n}\}}{\zeta^3_n}=0\,.
\end{eqnarray*}
Moreover, one can also check that the second moment of $\hat{v}_{1j,i}$ converges to 1, indeed let $\Var(\hat{v}_{11,i})=\sigma^2_n$ then
\begin{eqnarray}\label{sigma2}
  |\sigma_n^2-1| \leq  C\E(|v_{11,i}|^2 \1\{|v_{11,i}|>\zeta_n\sqrt{n}\})\leq C \zeta^2_nn\frac{\E(|v_{11,i}|^3\1\{|v_{11,i}|>\zeta_n\sqrt{n}\})}{\zeta^3_nn^{3/2}}=o\left(n^{-1/2}\right)\,. 
\end{eqnarray}
Then the difference between the truncated and original quadratic forms is given by
\begin{eqnarray}\label{truncated_vs_original}
&& \E\left| {v}^\top_{1,i}({B}_{(i,1)}{B}_{(i,1)}^\top+\epsilon_nn\bfI_i)^{-1}v_{1,i} - \hat{v}^\top_{1,i}({B}_{(i,1)}{B}_{(i,1)}^\top+\epsilon_nn\bfI_i)^{-1}\hat{v}_{1,i}\right|^2\nonumber\\
&=&   \E\left| ({v}_{1,i}-\hat{v}_{1,i})^\top({B}_{(i,1)}{B}_{(i,1)}^\top+\epsilon_nn\bfI_i)^{-1}({v}_{1,i}+\hat{v}_{1,i})\right|^2\nonumber\\
&\overset{\text{CS}}{\leq} & C\E ({v}_{1,i}-\hat{v}_{1,i})^\top({B}_{(i,1)}{B}_{(i,1)}^\top+\epsilon_nn\bfI_i)^{-1}({v}_{1,i}-\hat{v}_{1,i})\cdot({v}_{1,i}+\hat{v}_{1,i})^\top({B}_{(i,1)}{B}_{(i,1)}^\top+\epsilon_nn\bfI_i)^{-1}({v}_{1,i}+\hat{v}_{1,i})\nonumber\\
&\leq& C \E({v}_{1,i}-\hat{v}_{1,i})^\top({B}_{(i,1)}{B}_{(i,1)}^\top+\epsilon_nn\bfI_i)^{-1}({v}_{1,i}-\hat{v}_{1,i})\leq C\E \frac{({v}_{1,i}-\hat{v}_{1,i})^\top({v}_{1,i}-\hat{v}_{1,i})}{\lambda_{min}({B}_{(i,1)}{B}_{(i,1)}^\top+\epsilon_nn\bfI_i)}\nonumber \\
&\leq &  \frac{C}{n} \E\left(\sum\limits_{j=1}^nv^2_{1j,i}\1 \{|v_{1j,i}|>\zeta_n\sqrt{n}\}\right) =C\E\left(|v_{11,i}|^2 \1\{|v_{11,i}|>\zeta_n\sqrt{n}\}\right)\nonumber\\
&\leq& C\zeta_n^2n\frac{\E\left(|v_{11,i}|^3\1\{|v_{11,i}|>\zeta_n\sqrt{n}\}\right)}{\zeta^3_nn^{3/2}}=o(n^{-1/2})\,.
\end{eqnarray}

Thus, we can safely replace $v_{1,i}$ by $\hat{v}_{1,i}$ in \eqref{delta_21}. 
We recall that $X_{11}$ is regularly varying with index $\alpha\in (3,4)$, i.e., $\P(|X_{11}>x)=x^{-\alpha} L(x)$ for a slowly varying function $L$. The following formula for truncated moments of $X_{11}$ is well--known (see, for instance, \cite{bingham:goldie:teugels:1987})
$$\hat{\nu}_4:=\E[|X_{11}|^4\1\{|X_{11}|>\zeta_n\sqrt{n}\}] 
\sim \frac{\alpha}{4-\alpha}\zeta^{4-\alpha}_nn^{(4-\alpha)/2} L(\zeta_n\sqrt{n})\,, \quad \nto\,.$$ 
Consider now \eqref{delta_21}
\begin{eqnarray}
&&  \E\left| v^\top_{1,i}({B}_{(i,1)}{B}_{(i,1)}^\top+\epsilon_nn\bfI_i)^{-1}v_{1,i} - \E_2 {v}^\top_{1,i}({B}_{(i,1)}{B}_{(i,1)}^\top+\epsilon_nn\bfI_i)^{-1} v_{1,i}\right|^2\nonumber\\
&\leq& C\underbrace{\E\left| v^\top_{1,i}({B}_{(i,1)}{B}_{(i,1)}^\top+\epsilon_nn\bfI_i)^{-1}v_{1,i} - \hat{v}^\top_{1,i}({B}_{(i,1)}{B}_{(i,1)}^\top+\epsilon_nn\bfI_i)^{-1} \hat{v}_{1,i}\right|^2}_{o(n^{-1/2})~\text{by \eqref{truncated_vs_original}}}\nonumber\\
& +& C\underbrace{\E\left|\E_2 v^\top_{1,i}({B}_{(i,1)}{B}_{(i,1)}^\top+\epsilon_nn\bfI_i)^{-1}v_{1,i} - \E_2\hat{v}^\top_{1,i}({B}_{(i,1)}{B}_{(i,1)}^\top+\epsilon_nn\bfI_i)^{-1} \hat{v}_{1,i}\right|^2}_{o(n^{-1})~\text{by \eqref{sigma2}}}  \nonumber\\
&+& C\E\left| \hat{v}^\top_{1,i}({B}_{(i,1)}{B}_{(i,1)}^\top+\epsilon_nn\bfI_i)^{-1}\hat{v}_{1,i} - \E_2\hat{v}^\top_{1,i}({B}_{(i,1)}{B}_{(i,1)}^\top+\epsilon_nn\bfI_i)^{-1} \hat{v}_{1,i}\right|^2 \label{last_summand}
\end{eqnarray}
 and apply Lemma B.26 from \cite{bai:silverstein:2010} on the last summand in \eqref{last_summand}
\begin{eqnarray*}
&&  \E\left| \hat{v}^\top_{1,i}({B}_{(i,1)}{B}_{(i,1)}^\top+\epsilon_nn\bfI_i)^{-1}\hat{v}_{1,i} - \E_2 \hat {v}^\top_{1,i}({B}_{(i,1)}{B}_{(i,1)}^\top+\epsilon_nn\bfI_i)^{-1} \hat v_{1,i}\right|^2\\
&\leq& C\hat{\nu}_4\E\tr({B}_{(i,1)}{B}_{(i,1)}^\top+\epsilon_nn\bfI_i)^{-2}\le \frac{C}{n} \hat{\nu}_4 \\
&\sim& \frac{C}{n} \zeta^{4-\alpha}_nn^{(4-\alpha)/2} L(\zeta_n\sqrt{n})=o(n^{-1/2})\,, \qquad \nto\,,
\end{eqnarray*}
where in the last step we used the Potter bounds for the slowly varying function $L$.

  Thus, similarly as for $\delta^{(1)}_n$  we get
  \begin{eqnarray}\label{delta_2}
    n^{k-2}\delta^{(2)}_n=O(\epsilon^{-k}_nn^{-(k-1)})+o(n^{-1/2})\,.
  \end{eqnarray}
  Concerning $\delta^{(3)}_n$, we observe the following
  \begin{eqnarray*}
 &&   \left|\frac{\E\tr({B}_{(i,1)}{B}_{(i,1)}^\top+\epsilon_nn\bfI_i)^{-1}}{1+\E\tr({B}_{(i,1)}{B}_{(i,1)}^\top+\epsilon_nn\bfI_i)^{-1}}-\E({v}^\top_{1,i}({B}_{(i)}{B}_{(i)}^\top)^{-1}v_{1,i})\right|^k \\
    &\leq& C \left|\E({v}^\top_{1,i}({B}_{(i)}{B}_{(i)}^\top+\epsilon_nn\bfI_i)^{-1}v_{1,i}-\frac{\E\tr({B}_{(i,1)}{B}_{(i,1)}^\top+\epsilon_nn\bfI_i)^{-1}}{1+\E\tr({B}_{(i,1)}{B}_{(i,1)}^\top+\epsilon_nn\bfI_i)^{-1}})\right|^k\\[0.2cm]
    &+&C \underbrace{|\E({v}^\top_{1,i}({B}_{(i)}{B}_{(i)}^\top)^{-1}v_{1,i}-\E({v}^\top_{1,i}({B}_{(i)}{B}_{(i)}^\top+\epsilon_nn\bfI_i)^{-1}v_{1,i}|^k}_{\text{$=O(\epsilon_n^k)$ due to \eqref{delta1}}}\\
        &\overset{Jensen}{\leq}& C\underbrace{\E\left| ({v}^\top_{1,i}({B}_{(i)}{B}_{(i)}^\top+\epsilon_nn\bfI_i)^{-1}{v}_{1,i}-\frac{\E\tr({B}_{(i,1)}{B}_{(i,1)}^\top+\epsilon_nn\bfI_i)^{-1}}{1+\E\tr({B}_{(i,1)}{B}_{(i,1)}^\top+\epsilon_nn\bfI_i)^{-1}})\right|^k}_{=O(\epsilon^{-k}_n n^{-(k-1)})+o(n^{-1/2})~\text{due to \eqref{delta_2}}}+O(\epsilon_n^k)
  \end{eqnarray*}
 and, as a result, we have
  \begin{eqnarray}
   n^{k-2} \delta^{(3)}_n=O(\epsilon_n^k)+O(\epsilon^{-k}_n n^{-(k-1)})+o(n^{-1/2})\,
  \end{eqnarray}
  and altogether we receive the following rate for $\delta_n$
  \begin{eqnarray}\label{delta}
    n^{k-2} \delta_n=O(\epsilon_n^k)+O(\epsilon^{-k}_n n^{-(k-1)})+o(n^{-1/2})\,.
  \end{eqnarray}
  Now we need to specify the sequence $\epsilon_n$ such that $\delta_n$ converges to zero as fast as possible. Because $\epsilon_n$ can not vanish faster than $1/n$ we assume w.l.o.g. that $\epsilon_n=n^{-\varepsilon}$ for some $0<\varepsilon<1$, plug it into \eqref{delta} and get
  \begin{eqnarray}
    n^{k-2}\delta_n=O(n^{-k\varepsilon})+O( n^{k(\varepsilon-1)+1})+o(n^{-1/2})\,.
  \end{eqnarray}
Choosing $\varepsilon =\frac{1}{2k}$ finishes the proof of the lemma.
\end{proof}


\begin{lemma}\cite[Lemma 7.10]{erdos:yau:2017}\label{lem:qmoment}
Let $X_1,\ldots,X_N$ be independent centered random variables and assume that 
\begin{equation*}
(\E[|X_i|^s])^{1/s} \le \mu_s\,,\quad 1\le i\le N; s=2,3,\ldots
\end{equation*} 
for some fixed constants $\mu_s$. Then we have for any deterministic complex numbers $a_{ij}, 1\le i,j\le N$ that 
\begin{equation}\label{eq:erdos}
\Big(\E\Big[ \Big|\sum_{i\neq j=1}^N a_{ij}X_i X_j \Big|^s \Big]\Big)^{1/s}\le C \, s\, \mu_s^2 \Big(\sum_{i\neq j=1}^N |a_{ij}|^2\Big)^{1/2}\,,\quad s=2,3,\ldots,
\end{equation} 
where the constant $C$ does not depend on $s$.
\end{lemma}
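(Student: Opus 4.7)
The plan is to combine a decoupling inequality with two applications of Rosenthal's moment inequality, and to close a short induction on the exponent $s$. For brevity write $\|A\|_F^2:=\sum_{i\neq j}|a_{ij}|^2$.

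For the first step, the de la Pe\~na--Montgomery-Smith decoupling inequality furnishes an absolute constant $K$ such that, for an independent copy $(X_j')$ of $(X_j)$,
\begin{equation*}
\E\Big|\sum_{i\ne j} a_{ij} X_i X_j\Big|^s \;\le\; K^s\, \E\Big|\sum_{i\ne j} a_{ij} X_i X_j'\Big|^s \;=:\; K^s\, \E|B|^s.
\end{equation*}
Writing $B=\sum_i X_i Y_i$ with $Y_i:=\sum_{j\ne i} a_{ij} X_j'$, and noting that $X_i$ is independent of $Y_i$, the problem reduces to bounding the $s$-th moment of a sum of independent centered random variables conditionally on $X'=(X_j')$.

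For the second step, Rosenthal's inequality applied to the conditional linear form gives
\begin{equation*}
(\E_X |B|^s)^{1/s} \;\le\; C s\,\mu_s\,\Bigl[\bigl(\sum_i Y_i^2\bigr)^{1/2} + \bigl(\sum_i |Y_i|^s\bigr)^{1/s}\Bigr],
\end{equation*}
and since $(\sum_i|Y_i|^s)^{1/s}\le(\sum_i Y_i^2)^{1/2}$ for $s\ge 2$, the bracket collapses to $(\sum_i Y_i^2)^{1/2}$. Hence the task reduces to estimating the $(s/2)$-norm of the positive quadratic form
\begin{equation*}
T \;=\; \sum_i Y_i^2 \;=\; \sum_{j}\Bigl(\sum_{i:\,i\ne j} a_{ij}^2\Bigr)(X_j')^2 + \sum_{j\ne j'}\Bigl(\sum_{i\notin\{j,j'\}} a_{ij}a_{ij'}\Bigr)X_j' X_{j'}'.
\end{equation*}

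For the third step, splitting $T=T_1+T_2$ into the diagonal and off-diagonal pieces, I treat $T_1-\E T_1$ by one more Rosenthal application at exponent $s/2$, while $T_2$ is of exactly the form of the lemma itself and is handled by inducting on $s$. The key bound $\sum_{j\ne j'}c_{jj'}^2 \le \|A\|_F^4$, with $c_{jj'}:=\sum_{i\notin\{j,j'\}} a_{ij}a_{ij'}$ viewed as off-diagonal entries of $A^\top A$, ensures the recursion closes with the correct Hilbert--Schmidt power. The induction bottoms out at $s=2$, where the bound is an immediate orthogonality estimate using only the second moment assumption.

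The main obstacle is preserving a constant that is \emph{linear} rather than quadratic in $s$. A naive composition of two Rosenthal applications with constants $Cs$ produces $O(s^2)\mu_s^2\|A\|_F$. The remedy is to use the sharp form of Rosenthal's inequality, where the $\ell^2$ term carries only $C\sqrt{s}$ and only the $\ell^s$ term pays the full $Cs/\log s$; separating the contributions according to whether the tail term dominates, and carefully propagating the square-root factors through the induction, restores the advertised $Cs\mu_s^2\|A\|_F$ bound.
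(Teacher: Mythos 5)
The paper does not prove this lemma at all: it is imported verbatim as \cite[Lemma 7.10]{erdos:yau:2017}, where the argument is a direct combinatorial/martingale estimate of the integer moments. Your decoupling--Rosenthal--induction scheme is therefore a genuinely different route, and most of it is sound: the de la Pe\~na--Montgomery-Smith step costs only an absolute constant, the conditional Rosenthal reduction to $T=\sum_i Y_i^2$ is correct, the diagonal piece $T_1$ is handled by the triangle inequality in $L^{s/2}$ with no loss, and the recursion through the off-diagonal of $A^\top A$ (with $\sum_{j\ne j'}c_{jj'}^2\le\|A\|_{op}^2\|A\|_F^2\le\|A\|_F^4$) closes correctly along the $\ell^2$ branch: there the two factors of $C\sqrt{s}$ compose to $Cs$, as you say.

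The genuine gap is exactly at the point you flag as the ``main obstacle,'' and the proposed remedy does not resolve it. In the sharp Rosenthal bound the outer $\ell^s$ term is $C\tfrac{s}{\log s}\,\mu_s\,\bigl(\sum_i\E|Y_i|^s\bigr)^{1/s}$, and each $Y_i$ is itself a linear form whose $L^s$ norm can only be bounded by $C\tfrac{s}{\log s}\,\mu_s\,r_i$ with $r_i=(\sum_j|a_{ij}|^2)^{1/2}$; summing and using $(\sum_i r_i^s)^{1/s}\le\|A\|_F$ then yields a cross term of order $\tfrac{s^2}{(\log s)^2}\,\mu_s^2\,\|A\|_F$, which exceeds $Cs\,\mu_s^2\,\|A\|_F$ for large $s$. ``Separating the contributions according to whether the tail term dominates'' is precisely the hard part: one must show that the configurations saturating the inner $\ell^s$ term (few, heavy rows) force the outer $\ell^s$ norm to be far below $\|A\|_F$, and vice versa. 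Making this rigorous requires a Lata{\l}a-type optimization over intermediate exponents $r\in[2,s]$ (or the combinatorial expansion of the original reference), not merely the two-term sharp Rosenthal inequality. Two smaller points: the induction halves the exponent, so you need the statement for non-integer $s\ge 2$ (or a rounding device); and for the present paper's application only $s=4$ is used, so even your unrepaired argument, which gives $C_s\,\mu_s^2\,\|A\|_F$ with a superlinear $C_s$, would suffice there --- but it does not prove the lemma as stated, whose whole content is the linear dependence on $s$.
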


\begin{lemma} \cite[Theorem b) and d)]{wiens:1992} \label{lem:quf}
  Let $\bfz=(Z_1,\ldots,Z_n)^\top$ be a random vector such that, for all nonnegative integers $m_1,\ldots, m_6$ with $m_1+\cdots+m_6\le 6$, $\E[Z_{i_1}^{m_1}Z_{i_2}^{m_2} \cdots Z_{i_6}^{m_6}]$ is 
(i) finite;
(ii) zero if any $m_i$ is odd; and
(iii) invariant under permutations of the indices.
Let $\beta_2=\E[Z_1^2], \beta_{2,2}=\E[Z_1^2Z_2^2],\beta_4=\E[Z_1^4], \beta_{4,2}=\E[Z_1^4Z_2^2]$ and $\beta_6=\E[Z_1^6]$. 	
 Then we have for any real-valued and symmetric $n\times n$ nonrandom matrix $\bfA$ that
	\begin{equation*}
	\begin{split}
	\E&[(\bfz^{\top} \bfA \bfz -\E[\bfz^{\top} \bfA \bfz])^3]=  
	8 \beta_{2,2,2}\tr(\bfA^3) +(\beta_{2,2,2}+2\beta_{2}^3-3\beta_{2}\beta_{2,2}) (\tr \bfA)^3 \\
	&+6(\beta_{2,2,2}-\beta_{2}\beta_{2,2}) \tr \bfA \tr(\bfA^2)
	+3(\beta_{4,2}-\beta_{4}\beta_{2}+3 \beta_{2,2} \beta_{2}-3\beta_{2,2,2}) \tr \bfA \tr(\bfA \circ \bfA)\\
	&+12 (\beta_{4,2}-3\beta_{2,2,2}) \tr(\bfA \circ \bfA^2) 
	+ (\beta_6-15 \beta_{4,2} +30 \beta_{2,2,2}) \tr (\bfA \circ \bfA \circ \bfA)\,,
	\end{split}
  \end{equation*}
	where $\circ$ denotes the Hadamard product. Moreover,
	\begin{equation*}
	\begin{split}
	\E[(\bfz^{\top} \bfA \bfz )^3]&=  
	\beta_{2,2,2}[(\tr \bfA)^3 +6 \tr \bfA \tr(\bfA^2)+8 \tr(\bfA^3)]+ (\beta_6 -15 \beta_{4,2}+30\beta_{2,2,2}) \tr(\bfA \circ \bfA \circ \bfA )\\
	&\quad +(\beta_{4,2}-3\beta_{2,2,2}) [3\tr \bfA \tr(\bfA \circ \bfA)+12 \tr(\bfA \circ \bfA^2)]\,.
	\end{split}
  \end{equation*}
		If $\bfB$ is another real-valued and symmetric $n\times n$ nonrandom matrix, one has
\begin{equation*}
	\E[\bfz^{\top} \bfA \bfz \bfz^{\top} \bfB \bfz]=  \beta_{2,2} [\tr \bfA \tr \bfB +2 \tr(\bfA \bfB)] + (\beta_4-3\beta_{2,2}) \tr(\bfA \circ \bfB)\,.
  \end{equation*}
\end{lemma}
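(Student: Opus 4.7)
The plan is to proceed by direct combinatorial expansion of each expectation as a multisum over index tuples, then classify terms by the partition of indices into equal-value blocks. Write
\begin{equation*}
\bfz^{\top}\bfA\bfz \,\bfz^{\top}\bfB\bfz = \sum_{i,j,k,l} A_{ij}B_{kl}\, Z_i Z_j Z_k Z_l, \qquad (\bfz^{\top}\bfA\bfz)^3 = \sum_{i_1,j_1,i_2,j_2,i_3,j_3} A_{i_1j_1}A_{i_2j_2}A_{i_3j_3} \prod_{t=1}^3 Z_{i_t}Z_{j_t}.
\end{equation*}
By assumption (ii), a product of $Z$'s has nonzero expectation only when every distinct index appears an even number of times, and by (iii) that expectation depends only on the multiset of block sizes. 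So each sum reduces to a weighted sum of $A$- and $B$-entries over tuples whose index-value partition has even parts, with weights among $\beta_2,\beta_4,\beta_6,\beta_{2,2},\beta_{4,2},\beta_{2,2,2}$.

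For $\E[\bfz^{\top}\bfA\bfz\,\bfz^{\top}\bfB\bfz]$ the only possible patterns on four indices are: all four equal (weight $\beta_4$, sum $\sum_i A_{ii}B_{ii}=\tr(\bfA\circ\bfB)$); or two distinct values each in pairs, in three ``matchings'' $\{ij,kl\}$, $\{ik,jl\}$, $\{il,jk\}$ (weight $\beta_{2,2}$ each, giving $\tr\bfA\tr\bfB$ and $2\tr(\bfA\bfB)$ by symmetry of $\bfB$); but these sums are over all indices without the inequality constraint, so I would use inclusion--exclusion to subtract the diagonal overlap and combine. Collecting and simplifying reproduces $\beta_{2,2}[\tr\bfA\tr\bfB+2\tr(\bfA\bfB)]+(\beta_4-3\beta_{2,2})\tr(\bfA\circ\bfB)$.

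For $\E[(\bfz^{\top}\bfA\bfz)^3]$ the partitions of the six $Z$-indices into even blocks are $(6)$, $(4,2)$, $(2,2,2)$. For $(6)$: weight $\beta_6$ and sum $\sum_i A_{ii}^3=\tr(\bfA\circ\bfA\circ\bfA)$. For $(2,2,2)$: the three pairs may each be ``internal'' to one factor $A_{i_tj_t}$ (giving $A_{i_1i_1}A_{i_2i_2}A_{i_3i_3}$, summing to $(\tr\bfA)^3$), or have exactly one internal pair and the other two factors linked by a shared index (giving $A_{i_1i_1}(A^2)_{i_2i_2}$ style, summing to $\tr\bfA\tr(\bfA^2)$), or have all three factors tied in a cycle (summing to $\tr(\bfA^3)$); counting the orbits of the six-index tuples under the pair-preserving symmetry and dealing with the distinct-vs-coincident index constraints via inclusion--exclusion produces the combination $\beta_{2,2,2}[(\tr\bfA)^3+6\tr\bfA\tr(\bfA^2)+8\tr(\bfA^3)]$ modulo collisions that will be absorbed in the $(4,2)$ and $(6)$ corrections. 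For $(4,2)$: weight $\beta_{4,2}$; the ``size-4'' block may collect all four indices from two factors (giving $\tr\bfA\tr(\bfA\circ\bfA)$ type) or from three factors via one internal pair and two crossing indices (giving $\tr(\bfA\circ\bfA^2)$ type), again with factor counts coming from orbit sizes. Summing all three patterns, rewriting the restricted sums as unrestricted sums minus ``diagonal'' overlaps, and grouping coefficients by $\beta_6,\beta_{4,2},\beta_{2,2,2}$ yields the stated formula.

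Finally, for the centered version I would expand $(\bfz^{\top}\bfA\bfz-\E[\bfz^{\top}\bfA\bfz])^3$ using the binomial theorem, substitute $\E[\bfz^{\top}\bfA\bfz]=\beta_2\tr\bfA$ together with the formula for $\E[(\bfz^{\top}\bfA\bfz)^2]=\beta_{2,2}(\tr\bfA)^2+2\beta_{2,2}\tr(\bfA^2)+(\beta_4-3\beta_{2,2})\tr(\bfA\circ\bfA)$ (the $\bfA=\bfB$ specialization just proved), and simplify. The main obstacle will be the second step: correctly enumerating the orbits of six-index tuples compatible with each even-block partition while respecting the pair-structure $(i_t,j_t)$ imposed by the three $A$-factors, and then converting restricted sums over distinct values into the six canonical trace/Hadamard monomials via clean inclusion--exclusion --- this bookkeeping is what produces the somewhat unexpected coefficients $1,6,8$ and the correction terms $\beta_6-15\beta_{4,2}+30\beta_{2,2,2}$ and $\beta_{4,2}-3\beta_{2,2,2}$ in the final formulas.
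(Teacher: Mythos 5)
You should first note that the paper does not prove this lemma at all: it is imported verbatim from Wiens (1992), so there is no in-paper argument to compare against, and any self-contained derivation is already "a different route" in the trivial sense. Your strategy --- expand each expectation as a multisum over index tuples, use (ii) to keep only tuples whose index-value partition has all even parts, use (iii) to attach a single weight $\beta_{\cdot}$ to each partition type, and then convert the restricted (distinct-value) sums into the trace/Hadamard monomials by inclusion--exclusion --- is the standard and correct way to prove such identities, and is essentially what the cited source does. For the fourth-moment identity $\E[\bfz^{\top}\bfA\bfz\,\bfz^{\top}\bfB\bfz]$ your sketch is complete and checks out: the three pairings give $\beta_{2,2}[\tr\bfA\tr\bfB+2\tr(\bfA\bfB)]$ after using the symmetry of $\bfB$, and subtracting the three diagonal overlaps $\tr(\bfA\circ\bfB)$ from the all-equal case yields the coefficient $\beta_4-3\beta_{2,2}$. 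For the two sixth-moment identities, however, you have written a correct plan rather than a proof: the entire content of the lemma lies in the orbit counts and collision corrections that you defer ("modulo collisions that will be absorbed", "counting the orbits \ldots produces"), and these must actually be carried out. A useful consistency check that your completed bookkeeping must pass: the multiplicities $1,6,8$ (and $3,12$) are exactly the Gaussian Wick-pairing counts for three symmetric quadratic forms, and the Hadamard-trace correction coefficients $\beta_6-15\beta_{4,2}+30\beta_{2,2,2}$ and $\beta_{4,2}-3\beta_{2,2,2}$ vanish when $\beta_6=15$, $\beta_{4,2}=3$, $\beta_{2,2,2}=1$ (standard normal), so the non-Gaussian corrections are confined to the diagonal-collision terms, precisely as your classification predicts. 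Until that enumeration is written out (or the result is simply cited, as the paper does), the argument for the two cubic identities is incomplete, though not wrong in approach.
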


{\small
\bibliography{libraryDec2021}}

\end{document}